\newtheorem{Theorem}{Theorem}[section]
\newtheorem{Proposition}[Theorem]{Proposition}
\newtheorem{Lemma}[Theorem]{Lemma}
\newtheorem{ClaimADSimpRSpo}{Claim}
\newtheorem{ClaimSADSimpRSpocd2}{Claim}
\newtheorem*{TheoremRSg}{Rival--Sands theorem for graphs}
\newtheorem*{TheoremRSpo}{Rival--Sands theorem for partial orders}
\theoremstyle{definition}
\newtheorem{Definition}[Theorem]{Definition}
\newtheorem{Question}[Theorem]{Question}
\newtheorem{Construction}[Theorem]{Construction}
\newcommand{\rca}{\mathsf{RCA}_0}
\newcommand{\wkl}{\mathsf{WKL}_0}
\newcommand{\aca}{\mathsf{ACA}_0}
\newcommand{\lpp}{\mathsf{LPP}}
\newcommand{\pica}{\Pi^1_1\mbox{-}\mathsf{CA}_0}
\newcommand{\rt}{\mathsf{RT}}
\newcommand{\srt}{\mathsf{SRT}}
\newcommand{\ads}{\mathsf{ADS}}
\newcommand{\sads}{\mathsf{SADS}}
\newcommand{\cac}{\mathsf{CAC}}
\newcommand{\mmlc}{\mathsf{MMLC}}
\newcommand{\ssrt}[3]{{#1}\mbox{-}\mathrm{stable~}\srt^{#2}_{#3}}
\newcommand{\rspo}{\mathsf{RSpo}}
\newcommand{\cofrspo}{(0,\cof)\mbox{-}\rspo}
\newcommand{\rspocd}{\mathsf{RSpo}^{\mathsf{CD}}}
\newcommand{\cofrspocd}{(0,\cof)\mbox{-}\rspocd}
\newcommand{\cc}{\mathsf{CC}}
\newcommand{\ca}{\mathsf{CA}}
\newcommand{\isig}[1]{\mathsf{I}\Sigma^0_{#1}}
\newcommand{\ipi}[1]{\mathsf{I}\Pi^0_{#1}}
\newcommand{\bsig}[1]{\mathsf{B}\Sigma^0_{#1}}
\newcommand{\bpi}[1]{\mathsf{B}\Pi^0_{#1}}
\DeclareMathOperator{\ran}{\mathrm{ran}}
\DeclareMathOperator{\findlad}{\mathrm{FindLadder}}
\newcommand{\andd}{\wedge}
\newcommand{\orr}{\vee}
\newcommand{\la}{\langle}
\newcommand{\ra}{\rangle}
\newcommand{\da}{{\downarrow}}
\newcommand{\ua}{{\uparrow}}
\newcommand{\imp}{\rightarrow}
\newcommand{\Imp}{\Rightarrow}
\newcommand{\biimp}{\leftrightarrow}
\newcommand{\Biimp}{\Leftrightarrow}
\newcommand{\Nb}{\mathbb{N}}
\newcommand{\smf}{\smallfrown}
\newcommand{\rst}{{\restriction}}
\newcommand{\ltKB}{<_\mathrm{KB}}
\newcommand{\gtKB}{>_\mathrm{KB}}
\newcommand{\wh}[1]{\widehat{#1}}
\newcommand{\comp}{\lessgtr}
\newcommand{\cof}{\mathrm{cof}}
\newcommand{\pwb}{\leq_{\forall\exists}}
\newcommand{\npwb}{\nleq_{\forall\exists}}
   \def\MR#1{}
\title{(Extra)ordinary equivalences with the ascending/descending sequence principle}
\author{Marta Fiori-Carones}
\address{Sobolev Institute of Mathematics, pr. Akad. Koptyuga 4, Novosibirsk, 630090 Russia}
\email{marta.fioricarones@outlook.it}
\urladdr{https://martafioricarones.github.io}
\author{Alberto Marcone}
\address{Dipartimento di scienze matematiche, informatiche e fisiche, Universit\`a di Udine, Via delle Scienze 208, 33100 Udine, Italy}
\email{alberto.marcone@uniud.it}
\urladdr{http://users.dimi.uniud.it/~alberto.marcone/}
\author{Paul Shafer}
\address{School of Mathematics\\
University of Leeds\\
Leeds LS2 9JT\\
United Kingdom}
\email{p.e.shafer@leeds.ac.uk}
\urladdr{http://www1.maths.leeds.ac.uk/~matpsh/}
\author{Giovanni Sold\`{a}}
\address{Department of Mathematics:  Analysis, Logic, and Discrete Mathematics\\
Ghent University\\
Krijgslaan 281 S8\\
9000 Ghent\\
Belgium}
\email{giovanni.a.solda@gmail.com}
\urladdr{https://giovannisolda.github.io/}
\subjclass[2020]{%
Primary 03B30, 
03F35, 
05D10, 
06A06
}
\date{\today}
\begin{document}

\begin{abstract}
We analyze the axiomatic strength of the following theorem due to Rival and Sands~\cite{RivalSands} in the style of reverse mathematics.  \emph{Every infinite partial order $P$ of finite width contains an infinite chain $C$ such that every element of $P$ is either comparable with no element of $C$ or with infinitely many elements of $C$}.  Our main results are the following.  The Rival--Sands theorem for infinite partial orders of arbitrary finite width is equivalent to $\isig{2} + \ads$ over $\rca$.  For each fixed $k \geq 3$, the Rival--Sands theorem for infinite partial orders of width $\leq\! k$ is equivalent to $\ads$ over $\rca$.  The Rival--Sands theorem for infinite partial orders that are decomposable into the union of two chains is equivalent to $\sads$ over $\rca$.  Here $\rca$ denotes the recursive comprehension axiomatic system, $\isig{2}$ denotes the $\Sigma^0_2$ induction scheme, $\ads$ denotes the ascending/descending sequence principle, and $\sads$ denotes the stable ascending/descending sequence principle.  To our knowledge, these versions of the Rival--Sands theorem for partial orders are the first examples of theorems from the general mathematics literature whose strength is exactly characterized by $\isig{2} + \ads$, by $\ads$, and by $\sads$.  Furthermore, we give a new purely combinatorial result by extending the Rival--Sands theorem to infinite partial orders that do not have infinite antichains, and we show that this extension is equivalent to arithmetical comprehension over $\rca$.
\end{abstract}

\maketitle

\section{Introduction}\label{sec-intro}

One of the major initiatives in reverse mathematics is the logical analysis of theorems of countable combinatorics, with special attention to Ramsey's theorem for pairs and its consequences~\cites{
CholakJockuschSlaman,
ChongSlamanYang,
ChongSlamanYangConvervation,
ChongSlamanYangInductiveStrength,
HirschfeldtShore,
LiuRT22vsWKL,
LiuRT22vsWWKL,
PateyYokoyama,
SeetapunSlaman,
SlamanYokoyama,
LermanSolomonTowsner}.  We continue this tradition by analyzing the second of two theorems from \emph{On the adjacency of vertices to the vertices of an infinite subgraph} by Rival and Sands~\cite{RivalSands}.  To our knowledge, this analysis yields the first examples of theorems from the general mathematics literature whose strength is exactly characterized by the ascending/descending sequence principle.

Both of the Rival--Sands theorems are inspired by Ramsey's theorem for pairs and two colors.  The first theorem is a hybrid \emph{inside/outside} version of Ramsey's theorem for pairs.  Thinking in terms of graphs, Ramsey's theorem for pairs produces an infinite set of vertices $H$ that is either a clique or an independent set in a given countable graph.  However, Ramsey's theorem provides no information concerning the relationship between the vertices \emph{inside} $H$ and the vertices \emph{outside} $H$.  The \emph{Rival--Sands theorem for graphs} balances this situation by producing an infinite set of vertices $H$ that is not necessarily a clique or an independent set, but for which there is information concerning the relationship between the vertices inside $H$ and the vertices outside $H$.

\begin{TheoremRSg}[\cite{RivalSands}]
Every infinite graph $G$ contains an infinite subset $H$ such that every vertex of $G$ is adjacent to precisely none, one, or infinitely many vertices of $H$.
\end{TheoremRSg}

Rival and Sands note that the three options ``none,'' ``one,'' and ``infinitely many'' in their theorem are all necessary.  They ask for a class of graphs for which the ``one'' option may be removed, and they show that this is possible for the class of comparability graphs of infinite partial orders of finite width (i.e., infinite partial orders for which there is a fixed finite upper bound on the size of the antichains).  We call the resulting theorem the \emph{Rival--Sands theorem for partial orders}.

\begin{TheoremRSpo}[\cite{RivalSands}]
Every infinite partial order $P$ of finite width contains an infinite chain $C$ such that every element of $P$ is either comparable with no element of $C$ or with infinitely many elements of $C$.
\end{TheoremRSpo}

Furthermore, Rival and Sands show that in the case of countable partial orders, the ``infinitely many'' option may be strengthened to ``cofinitely many.''

The Rival--Sands theorems do not appear to be immediate consequences of any version of Ramsey's theorem, and neither Rival--Sands theorem appears to be an immediate consequence of the other.  Rival and Sands give direct proofs of both theorems that do not invoke any other Ramsey-theoretic statement.  Interestingly, their proof of the Rival--Sands theorem for partial orders makes essential use of $\pica$ by iterating a maximal chain principle that is equivalent to $\pica$ over $\rca$.  We discuss this in detail in Section~\ref{sec-MaxChains}.

In~\cite{Fiori-CaronesShaferSolda}, Fiori-Carones, Shafer, and Sold\`{a} analyze the axiomatic and computational strength of the Rival--Sands theorem for graphs in the style of reverse mathematics and Weihrauch analysis.  In reverse mathematics, the Rival--Sands theorem for graphs is equivalent to $\aca$ over $\rca$.  Thus the Rival--Sands theorem for graphs is indirectly equivalent to Ramsey's theorem for triples (which is also equivalent to $\aca$, see~\cite{SimpsonSOSOA}), but it does not follow from Ramsey's theorem for pairs (which is strictly weaker than $\aca$~\cite{SeetapunSlaman}).  However, the authors of~\cite{Fiori-CaronesShaferSolda} together with Hirst and Lempp show that a weakened inside-only version of the Rival--Sands theorem for graphs is indeed equivalent to Ramsey's theorem for pairs and two colors.  In terms of the Weihrauch degrees, the main result of~\cite{Fiori-CaronesShaferSolda} is that the Rival--Sands theorem for graphs is strongly Weihrauch-equivalent to the double-jump of weak K\"{o}nig's lemma.  To the authors' knowledge, the Rival--Sands theorem for graphs is the first theorem from the general mathematics literature exhibiting exactly this strength.  Furthermore, combining the aforementioned equivalence with a result of Brattka and Rakotoniaina~\cite{BrattkaRakotoniaina} yields that the Rival--Sands theorem for graphs is Weihrauch-equivalent to the parallelization of Ramsey's theorem for pairs and two colors.  Thus the uniform computational strength of the Rival--Sands theorem for graphs is exactly characterized by the ability to simultaneously solve countably many instances of Ramsey's theorem for pairs in parallel.

In this work, we characterize the axiomatic strength of the Rival--Sands theorem for partial orders in terms of the \emph{ascending/descending sequence principle}, which states that an infinite linear order contains either an infinite ascending sequence or an infinite descending sequence.  Our primary focus is the Rival--Sands theorem for partial orders as stated above, but we also consider the version with ``cofinitely many'' in place of ``infinitely many.''  The main results are the following, which are summarized in Theorem~\ref{thm-RSpoEquivs}.

\begin{itemize}
\item The Rival--Sands theorem for infinite partial orders of arbitrary finite width is equivalent to the ascending/descending sequence principle plus the $\Sigma^0_2$ induction scheme over $\rca$.

\medskip

\item For each fixed standard $k \geq 3$, the Rival--Sands theorem for infinite partial orders of width $\leq\! k$ is equivalent to the ascending/descending sequence principle over $\rca$.

\medskip

\item The Rival--Sands theorem for infinite partial orders that are decomposable into the union of two chains is equivalent to the stable ascending/descending sequence principle over $\rca$.

\medskip

\item The Rival--Sands theorem with ``cofinitely many'' in place of ``infinitely many'' for infinite partial orders of width $\leq\! 2$ is equivalent to the ascending/descending sequence principle over $\rca$ plus the $\Sigma^0_2$ induction scheme.
\end{itemize}

Furthermore, in Theorems~\ref{thm-CofExtendedRSpo} and~\ref{thm-ExtendedRSpo}, we give a new purely combinatorial result by extending the Rival--Sands theorem to all countably infinite partial orders that do not have infinite antichains.  This is non-trivial, as a partial order may have arbitrarily large finite antichains, and therefore \emph{not} have finite width, yet still have no infinite antichain.  In Theorem~\ref{thm-ExtendedRSpoEquiv}, we also show that the extension of the Rival--Sands theorem to countably infinite partial orders without infinite antichains is equivalent to $\aca$ over $\rca$.

Computational aspects of linear and partial orders have long been studied.  In reverse mathematics, the ascending/descending sequence principle is a weak consequence of Ramsey's theorem for pairs that was first isolated and studied by Hirschfeldt and Shore~\cite{HirschfeldtShore}.  There are a few classical statements that are readily equivalent to this principle over $\rca$, such as the statement ``Every countable sequence of real numbers contains a monotone subsequence'' (see~\cite{KreuzerThesis}*{Remark~6.8}), but thus far the principle's primary use has been as an important technical benchmark.  To our knowledge, Theorem~\ref{thm-RSpoEquivs} provides the first examples from the modern mathematical literature of theorems that are equivalent to the ascending/descending sequence principle, the ascending/descending sequence principle plus $\Sigma^0_2$ induction, and to the stable ascending/descending sequence principle.  This explains our title.  It is extraordinary to find theorems from the ordinary literature that are equivalent to the ascending/descending sequence principle.

It follows from our analysis that the Rival--Sands theorem for partial orders without infinite antichains is equivalent to the Rival--Sands theorem for graphs, whereas the Rival--Sands theorem for partial orders of finite width is strictly weaker.  The relationship between the Rival--Sands theorem for partial orders and Ramsey's theorem for pairs is more curious.  Ramsey's theorem for pairs and two colors suffices to prove the Rival--Sands theorem for partial orders of width $k$ for any fixed $k$, but not for partial orders of arbitrary finite width.  However, Ramsey's theorem for pairs and arbitrarily many colors does suffice to prove the Rival--Sands theorem for partial orders of arbitrary finite width.  This is a matter of induction.  Ramsey's theorem for pairs and two colors does not prove the $\Sigma^0_2$ induction scheme~\cite{ChongSlamanYangInductiveStrength}, but Ramsey's theorem for pairs and arbitrarily many colors does~\cite{HirstThesis}.  All together, the Rival--Sands theorem for partial orders of arbitrary finite width is strictly weaker than Ramsey's theorem for pairs and arbitrarily many colors; the Rival--Sands theorem for partial orders of arbitrary finite width is not provable from Ramsey's theorem for pairs and two colors; and the Rival--Sands theorem for partial orders of a fixed finite width $k$ is strictly weaker than Ramsey's theorem for pairs and two colors.

This article is organized as follows.  Section~\ref{sec-background} gives an overview of the relevant reverse mathematics background.  Section~\ref{sec-Dilworth} formalizes several versions of the Rival--Sands theorem for partial orders and discusses principles concerning chains in partial orders, most notably Kierstead's effective analog of Dilworth's theorem~\cite{KiersteadDilworth}.  Section~\ref{sec-FirstProofs} presents our first proofs of the Rival--Sands theorem for partial orders.  These proofs are not axiomatically optimal, but they are easy to understand, and they introduce ideas that we later effectivize in order to give proofs in weaker systems.  Section~\ref{sec-forward} provides the forward directions of the equivalences mentioned above.  Section~\ref{sec-reverse} provides the reversals.  Section~\ref{sec-ExtendedRSpo} provides our extension of the Rival--Sands theorem to infinite partial orders without infinite antichains as well as a proof that this extension is equivalent to $\aca$.  All the proofs of the Rival--Sands theorem for partial orders and its variants given in Sections~\ref{sec-FirstProofs}--\ref{sec-ExtendedRSpo} are new.  In Section~\ref{sec-MaxChains}, we discuss the original proof by Rival and Sands, and we analyze several principles asserting that partial orders contain various sorts of maximal chains.

\section{Reverse mathematics background}\label{sec-background}

We give a brief review of $\rca$, $\wkl$, $\aca$, $\pica$, Ramsey's theorem for pairs, its combinatorial consequences, and the first-order schemes.  For further details, we refer the reader to Simpson's standard reference~\cite{SimpsonSOSOA} and to Hirschfeldt's monograph~\cite{HirschfeldtBook}.

We work in the two-sorted language of second-order arithmetic $0$, $1$, $<$, $+$, $\times$, $\in$, where variables $x$, $y$, $z$, etc.\ typically range over the first sort, thought of as \emph{natural numbers}, and variables $X$, $Y$, $Z$, etc.\ typically range over the second sort, thought of as \emph{sets of natural numbers}.  As usual, the symbol $\Nb$ denotes the first-order part of whatever structure is under consideration.

The axioms of the base system $\rca$ (for \emph{recursive comprehension axiom}) are as follows.
\begin{itemize}
\item A first-order sentence expressing that $(\Nb; 0, 1, <, +, \times)$ forms a discretely ordered commutative semi-ring with identity.

\medskip

\item The \emph{$\Sigma^0_1$ induction scheme} (denoted $\isig{1}$), which consists of the universal closures (by both first- and second-order quantifiers) of all formulas of the form
\begin{align*}
\bigl(\varphi(0) \;\andd\; \forall n \, (\varphi(n) \;\imp\; \varphi(n+1))\bigr) \;\imp\; \forall n \, \varphi(n),
\end{align*}
where $\varphi$ is $\Sigma^0_1$.

\medskip

\item The \emph{$\Delta^0_1$ comprehension scheme}, which consists of the universal closures (by both first- and second-order quantifiers) of all formulas of the form
\begin{align*}
\forall n \, \bigl(\varphi(n) \;\biimp\; \psi(n)\bigr) \;\imp\; \exists X \, \forall n \, \bigl(n \in X \;\biimp\; \varphi(n)\bigr),
\end{align*}
where $\varphi$ is $\Sigma^0_1$, $\psi$ is $\Pi^0_1$, and $X$ is not free in $\varphi$.
\end{itemize}
The `$0$' in `$\rca$' refers to the restriction of the induction scheme to $\Sigma^0_1$ formulas.  $\rca$ suffices to implement the usual bijective encodings of pairs of numbers, finite sequences of numbers, finite sets of numbers, and so on.  See~\cite{SimpsonSOSOA}*{Section~II.2} for details on how this is done.  For example, we may encode a function $f \colon \Nb \imp \Nb$ by its graph $\{\la m, n \ra : f(m) = n\}$.  We may also encode the set $\Nb^{<\Nb}$ of all finite sequences of natural numbers and the set $2^{<\Nb}$ of all finite binary sequences.  For $\sigma, \tau \in \Nb^{<\Nb}$ and $f \colon \Nb \imp \Nb$, let $|\sigma|$ denote the length of $\sigma$; let $\tau \preceq \sigma$ denote that $\tau$ is an initial segment of $\sigma$: $|\tau| \leq |\sigma| \;\andd\; \forall n < |\tau| \; (\tau(n) = \sigma(n))$; and let $\tau \preceq f$ denote that $\tau$ is an initial segment of $f$:  $\forall n < |\tau| \; (\tau(n) = f(n))$.  For $\sigma, \tau \in \Nb^{<\Nb}$, let $\sigma^\smf \tau$ denote the concatenation of $\sigma$ and $\tau$.  When $\tau = \la n \ra$ is a sequence of length $1$, we usually write $\sigma^\smf n$ instead of $\sigma^\smf \la n \ra$.  For $f \colon \Nb \imp \Nb$, $\sigma \in \Nb^{<\Nb}$, and $n \in \Nb$, let $f \rst n = \la f(0), f(1), \dots, f(n-1) \ra$ denote the initial segment of $f$ of length $n$; and if $n \leq |\sigma|$, let $\sigma \rst n = \la \sigma(0), \dots, \sigma(n-1) \ra$ denote the initial segment of $\sigma$ of length $n$.

$\rca$ also suffices to develop the basic theory of oracle Turing machines (see for example~\cite{SimpsonSOSOA}*{Section~VII.1}).  We view such machines as defining Turing functionals, and we write $\Phi(A)$ for the result of applying the functional $\Phi$ to the set $A$.  We also write $\Phi(A)(n)$ for the value of $\Phi(A)$ on input $n$, if it is defined.  We may relativize a Turing functional $\Phi$ to a set $X$, write $\Phi^X$ to denote the relativized functional, and write $\Phi^X(A)$ for $\Phi(X \oplus A)$, where $X \oplus A = \{2n : n \in X\} \cup \{2n+1 : n \in A\}$ as usual.  The $\Phi^X(A)$ notation is intended to convey that $X$ is fixed but $A$ may vary.  We may also iterate a sequence $\Phi_0, \dots, \Phi_{k-1}$ of Turing functionals.  For a set $A$, we say that the iteration $\Phi_{k-1}(\Phi_{k-2}(\cdots\Phi_0(A)\cdots))$ is total if $\Phi_i(\Phi_{i-1}(\cdots\Phi_0(A)\cdots))(n)$ is defined for every $i < k$ and every $n$.  This may be expressed by asserting that for every $n$, there is a sequence $\la \sigma_0, \dots, \sigma_k \ra$ of elements of $\Nb^{<\Nb}$, each of length at least $n$, such that $\sigma_0 \preceq A$ and $\forall i < k\; \forall m < |\sigma_{i+1}|\; \bigl(\text{$\Phi_i(\sigma_i)(m)$ halts within $|\sigma_i|$ steps and $\sigma_{i+1}(m) = \Phi_i(\sigma_i)(m)$}\bigr)$.

Define a \emph{tree} to be a set $T \subseteq \Nb^{<\Nb}$ that is closed under initial segments:  $\forall \sigma \, \forall \tau \, ((\sigma \in T \;\andd\; \tau \preceq \sigma) \;\imp\; \tau \in T)$.  Say that an $f \colon \Nb \imp \Nb$ is a \emph{infinite path} through a tree $T$ if every initial segment of $f$ is in $T$:  $\forall n \, (f \rst n \in T)$.  A tree with no infinite path is called \emph{well-founded}, and a tree with an infinite path is called \emph{ill-founded}.  Finally, a tree $T \subseteq \Nb^{<\Nb}$ is called \emph{finitely branching} if for every $\sigma \in T$ there are only finitely many $n$ with $\sigma^\smf n \in T$.  A tree $T \subseteq 2^{<\Nb}$ is necessarily finitely branching.  All of these definitions can be made in $\rca$.  The axioms of $\wkl$ (for \emph{weak K\"{o}nig's lemma}) are those of $\rca$, plus the statement that every infinite tree $T \subseteq 2^{<\Nb}$ has an infinite path.

The axioms of $\aca$ (for \emph{arithmetical comprehension axiom}) are of those of $\rca$, plus the \emph{arithmetical comprehension scheme}, which consists of the universal closures of all formulas of the form
\begin{align*}
\exists X \, \forall n \, \bigl(n \in X \;\biimp\; \varphi(n)\bigr),
\end{align*}
where $\varphi$ is an arithmetical formula in which $X$ is not free.  To show that some statement $\varphi$ implies $\aca$ over $\rca$, a common strategy is to use $\rca + \varphi$ to show that the ranges of injections exist as sets and appeal to the following well-known lemma.

\begin{Lemma}[\cite{SimpsonSOSOA}*{Lemma~III.1.3}]\label{lem-ACAinjection}
The following are equivalent over $\rca$.
\begin{enumerate}[(1)]
\item $\aca$.

\medskip

\item If $f \colon \Nb \imp \Nb$ is an injection, then there is a set $X$ such that $\forall n \, (n \in X \;\biimp\; \exists s \, (f(s) = n))$.
\end{enumerate}
\end{Lemma}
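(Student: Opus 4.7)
The plan is to verify both directions of the equivalence over $\rca$. The forward direction $(1) \Rightarrow (2)$ is immediate: given an injection $f \colon \Nb \to \Nb$, the formula $\exists s \, (f(s) = n)$ is $\Sigma^0_1$ and hence arithmetical, so under $\aca$ the range set $X = \{n : \exists s \, (f(s) = n)\}$ exists by arithmetical comprehension applied directly to this formula.

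For the reverse direction $(2) \Rightarrow (1)$, my plan is to use the standard reduction: it suffices to prove $\Sigma^0_1$ comprehension from $\rca$ plus (2), since $\Sigma^0_1$ comprehension is known to be equivalent to $\aca$ over $\rca$ (bootstrapping to $\Sigma^0_n$ comprehension by iterating the jump construction). Given a $\Sigma^0_1$ formula $\varphi(n) \equiv \exists m \, \theta(n, m)$ with $\theta$ bounded, I would build inside $\rca$ a total injection $f \colon \Nb \to \Nb$ whose range $\Delta^0_1$-encodes $\{n : \varphi(n)\}$. A convenient choice is to set $f(\la n, k \ra) = \la 1, n \ra$ whenever $\theta(n, k)$ holds and $k$ is the least such witness, and $f(\la n, k \ra) = \la 0, \la n, k \ra \ra$ otherwise. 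Since $\theta$ is bounded, the defining condition is $\Sigma^0_0$, so $f$ exists as a function in $\rca$ by $\Delta^0_1$ comprehension applied to its graph.

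Injectivity splits on the leading bit. The \emph{padding} outputs $\la 0, \la n, k \ra \ra$ are distinct across distinct inputs by construction, and for each fixed $n$ at most one $k$ can be the least witness to $\theta(n, \cdot)$, so each value $\la 1, n \ra$ appears at most once in the range. Applying hypothesis (2) produces $X = \ran(f)$, and then $\{n : \varphi(n)\} = \{n : \la 1, n \ra \in X\}$ exists by $\Delta^0_1$ comprehension, completing the proof of $\Sigma^0_1$ comprehension. The main subtlety, and the reason a naive enumeration of witnesses fails, is that one cannot tell in advance whether $\{n : \varphi(n)\}$ is finite or infinite; the padding clause resolves this by forcing $f$ to be total regardless, while the minimality condition on $k$ prevents multiple witnesses for the same $n$ from producing duplicate range values.
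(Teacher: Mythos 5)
Your proof is correct. The paper does not prove this lemma itself --- it cites it as Lemma~III.1.3 of Simpson's book --- and your argument (range of an injection is $\Sigma^0_1$ for the forward direction; for the reversal, reduce to $\Sigma^0_1$ comprehension and encode $\{n : \varphi(n)\}$ into the range of a total $\Delta^0_1$ injection via a padding clause plus a least-witness condition) is precisely the standard proof being referenced, with the key subtleties (totality of $f$, injectivity via minimality of the witness) handled correctly.
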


Unlike its weak version, the full version of K\"{o}nig's lemma, which states that every infinite finitely branching tree $T \subseteq \Nb^{<\Nb}$ has an infinite path, is equivalent to $\aca$ over $\rca$ (see~\cite{SimpsonSOSOA}*{Theorem~III.7.2}).

The axioms of $\pica$ (for \emph{$\Pi^1_1$ comprehension axiom}) are those of $\rca$, plus the \emph{$\Pi^1_1$ comprehension scheme}, which consists of the universal closures of all formulas of the form
\begin{align*}
\exists X \, \forall n \, \bigl(n \in X \;\biimp\; \varphi(n)\bigr),
\end{align*}
where $\varphi$ is a $\Pi^1_1$ formula in which $X$ is not free.  To show that some statement $\varphi$ implies $\pica$ over $\rca$, a useful tool is to use $\rca + \varphi$ to show that every ill-founded tree has a leftmost path.  For functions $f,g \colon \Nb \imp \Nb$, say that \emph{$g$ is to the left of $f$} if $\exists n \, \bigl(g(n) < f(n) \;\andd\; \forall i < n \; (g(i) = f(i))\bigr)$.  The \emph{leftmost path principle} ($\lpp$) states that for every ill-founded tree $T \subseteq \Nb^{<\Nb}$, there is an infinite path $f$ through $T$ such that no infinite path through $T$ is to the left of $f$.

\begin{Theorem}[\cite{MarconeNW}*{Theorem~6.5}]\label{thm-LPP}
The following are equivalent over $\rca$.
\begin{enumerate}[(1)]
\item $\pica$.

\medskip

\item $\lpp$.
\end{enumerate}
\end{Theorem}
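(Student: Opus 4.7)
The plan is to prove $(1) \Imp (2)$ and $(2) \Imp (1)$ separately.

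For $(1) \Imp (2)$, assume $\pica$ and let $T \subseteq \Nb^{<\Nb}$ be an ill-founded tree. Writing $T_\sigma = \{\tau \in \Nb^{<\Nb} : \sigma^\smf \tau \in T\}$ for the subtree above $\sigma$, the assertion that $T_\sigma$ has no infinite path is $\Pi^1_1$, so by $\pica$ one can form the set $E = \{\sigma : T_\sigma \text{ has an infinite path}\}$ as the complement. Since $T$ is ill-founded, $\la\rangle \in E$, and whenever $\sigma \in E$, an infinite path of $T$ extending $\sigma$ must pass through some immediate successor $\sigma^\smf k$, so $\sigma^\smf k \in E$. Once $E$ is available as a set, define the path in $\rca$ by primitive recursion via $f(n) = \min\{k : (f \rst n)^\smf k \in E\}$; this minimum exists at each step by the previous observation. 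Then $f$ is an infinite path of $T$ since $f \rst n \in E \subseteq T$ for all $n$, and if some path $g$ of $T$ were strictly to the left of $f$ with first disagreement at coordinate $n$, then $g \rst (n+1) = (f \rst n)^\smf g(n)$ would lie in $E$ with $g(n) < f(n)$, contradicting the minimality of $f(n)$.

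For $(2) \Imp (1)$, I would invoke the standard equivalence between $\pica$ and the statement that for every sequence $\la T_i : i \in \Nb \ra$ of trees $T_i \subseteq \Nb^{<\Nb}$, the set $W = \{i : T_i \text{ has an infinite path}\}$ exists. Given such a sequence, I would build a single tree $T$ whose leftmost path encodes $W$, using two sorts of coordinates. At the even coordinate $2i$ a node carries a decision bit $d_i \in \{0, 1\}$, and at the odd coordinate $2i+1$ it carries a single natural number coding a tuple $\la x^{(i)}_0, \dots, x^{(i)}_i \ra$, supplying one ``next path element'' for each $T_j$ with $j \leq i$. A node $\sigma$ belongs to $T$ iff for every $j$ with $d_j = 0$ appearing in $\sigma$, the sequence of $j$-th components read off from the odd coordinates $2j+1, 2(j+1)+1, \dots$ forms a node of $T_j$. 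Taking $d_i = 1$ for every $i$ (with arbitrary path data) gives an infinite path of $T$, so $T$ is ill-founded. Applying $\lpp$, let $f$ be a leftmost path of $T$. At coordinate $2i$, only the values $0$ and $1$ are admissible, and by induction on $i$ the leftmost choice is $0$ iff the commitment $d_i = 0$ extends to an infinite path of $T$; such an extension requires supplying an infinite path of $T_i$ along the $i$-th column of the odd coordinates, which is possible precisely when $T_i$ is ill-founded. Decoding $f$ yields $W$ in $\rca$.

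The main obstacle is the design of the combined tree $T$ in the second direction. The naive attempt of packing the decision bit $d_i$ together with the path data for $T_i$ into a single coordinate fails, because the leftmost-path criterion minimizes coordinate by coordinate and no injective coding of $\{0, 1\} \times \Nb$ into $\Nb$ can send every pair with first component $0$ strictly below every pair with first component $1$. Isolating the decision bit at an even coordinate resolves this: only the values $0$ and $1$ compete there, and $0$ is selected precisely when it is extendable. A subsidiary complication is that each odd coordinate $2i+1$ must simultaneously extend the partial paths of every earlier $T_j$ with $d_j = 0$, which is handled by having that coordinate carry a single number coding a tuple of length $i+1$, one new element per $T_j$ with $j \leq i$.
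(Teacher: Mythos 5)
The paper does not prove Theorem~\ref{thm-LPP}; it quotes it from Marcone--Montalb\'an--Weiermann, so there is no in-paper argument to compare against, and your proposal must be judged on its own. It is correct. The direction $\pica \Imp \lpp$ is the standard argument: ``$T_\sigma$ has no infinite path'' is $\Pi^1_1$, so $E$ exists by comprehension plus complementation; every node of $E$ has a child in $E$; the greedy recursion is $\Delta^0_1$ in $E$ and so available in $\rca$; and a path to the left of $f$ would place some $(f \rst n)^\smf k$ with $k < f(n)$ into $E$, contradicting minimality. The reversal is also sound, granted the standard fact (via the Kleene normal form for $\Sigma^1_1$ formulas) that $\pica$ follows over $\rca$ from the existence of $\{i : T_i \text{ is ill-founded}\}$ for every sequence of trees; you should cite that reduction explicitly rather than calling it ``standard.'' The one step in your amalgamated tree worth spelling out is why $(f \rst 2i)^\smf 0$ is extendible whenever $T_i$ is ill-founded: the column-$i$ data begins only at coordinate $2i+1$, so the commitment $d_i = 0$ carries no backlog, and the commitments already made for columns $j < i$ with $f(2j) = 0$ are extendible because $f$ itself witnesses their extendibility; one then splices a fresh infinite path of $T_i$ into column $i$, follows $f$'s columns for the earlier commitments, and sets all later decision bits to $1$. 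With that observation, leftmostness gives $f(2i) = 0$ exactly when $T_i$ is ill-founded, and $W$ is $\Delta^0_1$ in $f$. Your design remark about isolating the decision bit on its own coordinate is exactly the right fix for the coding obstruction you identify. (For what it is worth, the reversal the paper does carry out itself, in Theorem~\ref{thm-MaxLess}, obtains $\lpp$ via the Kleene--Brouwer order on a single tree rather than an amalgamation, but that serves a different statement.)
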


A huge amount of research in reverse mathematics is devoted to understanding the strength of Ramsey's theorem for pairs and its consequences.  For a set $X \subseteq \Nb$, let $[X]^2$ denote the set of two-element subsets of $X$, which may be encoded as $[X]^2 = \{\la x, y \ra: x, y \in X \;\andd\; x < y\}$.  A function $c \colon [\Nb]^2 \imp k$ is called a \emph{$k$-coloring of pairs}, and an infinite set $H \subseteq \Nb$ is called \emph{homogeneous} for a $k$-coloring of pairs $c$ if $c$ is constant on $[H]^2$.  Furthermore, a $k$-coloring of pairs $c$ is called \emph{stable} if $\lim_s c(n,s)$ exists for every $n$.  \emph{Ramsey's theorem for pairs and $k$ colors} ($\rt^2_k$) states that for every $k$-coloring of pairs $c$, there is a set that is homogeneous for $c$.  \emph{Stable Ramsey's theorem for pairs and $k$ colors} ($\srt^2_k$) is the restriction of $\rt^2_k$ to stable $k$-colorings of pairs $c$.  $\rt^2_{<\infty}$ abbreviates $\forall k \, \rt^2_k$, and $\srt^2_{<\infty}$ abbreviates $\forall k \, \srt^2_k$.  Likewise, a function $c \colon \Nb \imp k$ is called a \emph{$k$-coloring of singletons}, and an infinite $H \subseteq \Nb$ is called \emph{homogeneous} for a $k$-coloring of singletons $c$ if $c$ is constant on $H$.  \emph{Ramsey's theorem for singletons and $k$ colors} ($\rt^1_k$) states that for every $k$-coloring of singletons $c$, there is a set that is homogeneous for $c$.  $\rt^1_{<\infty}$ abbreviates $\forall k \, \rt^1_k$, which we think of as expressing the \emph{infinite pigeonhole principle}.  $\rca$ proves $\rt^1_k$ for each fixed standard $k$, but it does not prove $\rt^1_{<\infty}$.

Of the many combinatorial consequences of $\rt^2_2$, we are primarily concerned with the \emph{ascending/descending sequence principle} ($\ads$), stating that every countably infinite linear order has either an infinite ascending sequence or an infinite descending sequence, as well as its stable version $\sads$.  To be precise, let $(L, <_L)$ be a linear order.  A set $S \subseteq L$ is an \emph{ascending sequence} if $\forall x, y \in S \; (x < y \imp x <_L y)$, and it is a \emph{descending sequence} if $\forall x, y \in S \; (x < y \imp y <_L x)$.  The principle $\ads$ then states that for every infinite linear order $(L, <_L)$, there is an infinite $S \subseteq L$ that is either an ascending sequence or a descending sequence.  Often it is more convenient to phrase $\ads$ by stating that for every infinite linear order $(L, <_L)$, there is an infinite sequence $\la x_n : n \in \Nb \ra$ of elements of $L$ such that either $x_0 <_L x_1 <_L x_2 <_L \cdots$ or $x_0 >_L x_1 >_L x_2 >_L \cdots$.  $\rca$ proves that the two phrasings of $\ads$ are equivalent because it proves that any sequence $x_0, x_1, x_2, \dots$ of distinct elements of $\Nb$ can be thinned to an $<$-increasing subsequence $x_{i_0} < x_{i_1} < x_{i_2} < \cdots$ where the set $\{x_{i_n} : n \in \Nb\}$ exists.  Call a linear order $(L, <_L)$ \emph{stable} if every element either has only finitely many predecessors or has only finitely many successors.  $\sads$ is the restriction of $\ads$ to infinite stable linear orders.  Closely related to $\ads$ is the \emph{chain/antichain principle} ($\cac$), which states that every infinite partial order has either an infinite chain or an infinite antichain.  Figure~\ref{fig-summary} summarizes the relationships among several of the systems and principles mentioned thus far.

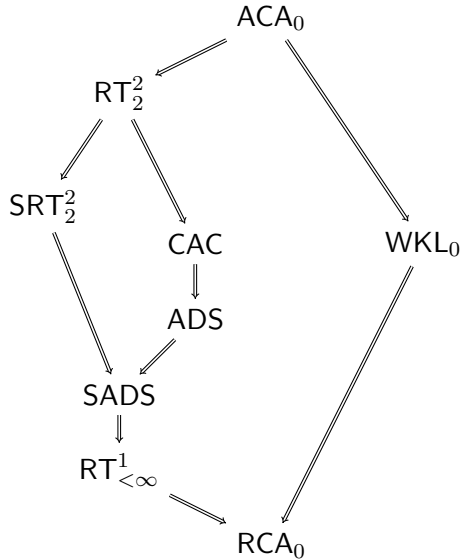
\begin{figure}[ht]
\begin{tikzpicture}
\node (ACA) at (0,10) {$\aca$};
\node (RT22) at (-2,9) {$\rt^2_2$};
\node (SRT22) at (-3,7.5) {$\srt^2_2$};
\node (CAC) at (-1,7) {$\cac$};
\node (ADS) at (-1,6) {$\ads$};
\node (SADS) at (-2,5) {$\sads$};
\node (RT1) at (-2,4) {$\rt^1_{<\infty}$};
\node (WKL) at (2,7) {$\wkl$};
\node (RCA) at (0,3) {$\rca$};

\draw[-{Implies},double] (ACA) --  (RT22);
\draw[-{Implies},double] (ACA) --  (WKL);
\draw[-{Implies},double] (WKL) --  (RCA);
\draw[-{Implies},double] (RT22) -- (SRT22);
\draw[-{Implies},double] (RT22) --  (CAC);
\draw[-{Implies},double] (SRT22) --  (SADS);
\draw[-{Implies},double] (CAC) --  (ADS);
\draw[-{Implies},double] (ADS) --  (SADS);
\draw[-{Implies},double] (SADS) --  (RT1);
\draw[-{Implies},double] (RT1) --  (RCA);
\end{tikzpicture}
\caption{Selected principles and systems and their implications and non-implications over $\rca$.  An arrow indicates that the source principle/system implies the target principle/system over $\rca$.  No further arrows may be added, except those that may be inferred via transitivity.  No arrows reverse.  Proofs of these implications and separations may be found in~\cites{ChongLemppYang, ChongSlamanYang, HirschfeldtShore, HirstThesis, LermanSolomonTowsner, LiuRT22vsWKL, PateyImmunity, SeetapunSlaman, SimpsonSOSOA}.}
\label{fig-summary}
\end{figure}

Finally, we recall the induction schemes and their cousins, to which we collectively refer as the \emph{first-order schemes}.

\begin{itemize}
\item The \emph{induction axiom for $\varphi$} is the universal closure of the formula
\begin{align*}
\bigl(\varphi(0) \;\andd\; \forall n \, (\varphi(n) \;\imp\; \varphi(n+1))\bigr) \;\imp\; \forall n \, \varphi(n).
\end{align*}

\medskip

\item The \emph{least element principle for $\varphi$} is the universal closure of the formula
\begin{align*}
\exists n \, \varphi(n) \;\imp\; \exists n \, \bigl(\varphi(n) \;\andd\; \forall m < n \; \neg\varphi(m)\bigr).
\end{align*}

\medskip

\item The \emph{bounded comprehension axiom for $\varphi$} is the universal closure of the formula
\begin{align*}
\forall b \, \exists X \, \forall n \, \bigl(n \in X \;\biimp\; (n < b \;\andd\; \varphi(n))\bigr),
\end{align*}
where $X$ is not free in $\varphi$.

\medskip

\item The \emph{bounding (or collection) axiom for $\varphi$} is the universal closure of the formula
\begin{align*}
\forall a\, \bigl(\forall n < a \; \exists m \; \varphi(n,m) \;\imp\; \exists b \; \forall n < a \; \exists m < b \; \varphi(n,m)\bigr),
\end{align*}
where $a$ and $b$ are not free in $\varphi$.
\end{itemize}

For a class of formulas $\Gamma$, the \emph{$\Gamma$ induction scheme} ($\mathsf{I}\Gamma$) consists of the induction axioms for all $\varphi \in \Gamma$, the \emph{$\Gamma$ least element principle} consists of the least element principles for all $\varphi \in \Gamma$, the \emph{bounded $\Gamma$ comprehension scheme} consists of the bounded comprehension axioms for all $\varphi \in \Gamma$, and the \emph{$\Gamma$ bounding scheme} ($\mathsf{B}\Gamma$) consists of the bounding axioms for all $\varphi \in \Gamma$.  For example, $\bsig{2}$ consists of the bounding axioms for all $\Sigma^0_2$ formulas.  Beyond $\rca$, we are mostly interested in $\bsig{2}$ and $\isig{2}$.  The following list summarizes the relationships between the systems and principles of Figure~\ref{fig-summary} and the first-order schemes.

\begin{itemize}
\item $\aca$ proves the induction axiom, least element principle, bounded comprehension axiom, and bounding axiom for every arithmetical formula.

\medskip

\item In addition to $\isig{1}$, $\rca$ proves $\ipi{1}$, the $\Sigma^0_1$ least element principle, the $\Pi^0_1$ least element principle, the bounded $\Sigma^0_1$ comprehension scheme, the bounded $\Pi^0_1$ comprehension scheme, and $\bsig{1}$ (see~\cite{HajekPudlak}*{Section~I.2} and~\cite{SimpsonSOSOA}*{Section~II.3}).

\medskip

\item Neither $\rca$ nor $\wkl$ proves $\bsig{2}$ (see~\cite{SimpsonSOSOA}*{Sections~IX.1 and~IX.2}).

\medskip

\item $\rca$ proves that $\bpi{1}$, $\bsig{2}$, and $\rt^1_{<\infty}$ are equivalent; that $\isig{2}$, $\ipi{2}$, the $\Sigma^0_2$ least element principle, the $\Pi^0_2$ least element principle, the bounded $\Sigma^0_2$ comprehension scheme, and the bounded $\Pi^0_2$ comprehension scheme are all equivalent; and that $\isig{2}$ implies $\bsig{2}$ (see~\cite{HajekPudlak}*{Section~I.2}, \cite{HirstThesis}, and~\cite{SimpsonSOSOA}*{Section~II.3}).

\medskip

\item $\sads$ and all the systems and principles above it in Figure~\ref{fig-summary} imply $\bsig{2}$ over $\rca$.

\medskip

\item $\rca + \rt^2_2$ does not prove $\isig{2}$~\cite{ChongSlamanYangInductiveStrength}.
\end{itemize}

We emphasize that $\bsig{2}$ and $\rt^1_{<\infty}$ are equivalent over $\rca$ because we use this equivalence often and without special mention.

\section{From one principle to many}\label{sec-Dilworth}

We begin studying the Rival--Sands theorem for partial orders from the perspective of reverse mathematics.  Typically we use $\omega$ to denote the order-type of $(\Nb, <)$, use $\omega^*$ to denote the order-type of its reverse, and use $\zeta$ to denote the order-type $\omega^* + \omega$ of the integers.  In some places we may write $<_\Nb$, $\leq_\Nb$, etc.\ instead of $<$, $\leq$, etc.\ to help disambiguate several orders under discussion.

\begin{Definition}\label{def-HandW}
Let $(P, <_P)$ be a partial order.
\begin{itemize}
\item Elements $p, q \in P$ are \emph{comparable} (written $p \comp_P q$) if either $p \leq_P q$ or $q \leq_P p$.  If $p$ and $q$ are not comparable, then they are \emph{incomparable} (written $p \mid_P q$).

\medskip

\item A \emph{chain} in $P$ is a set $C \subseteq P$ of pairwise comparable elements.

\medskip

\item An \emph{antichain} in $P$ is a set $X \subseteq P$ of pairwise incomparable elements.

\medskip

\item $P$ has \emph{width} $\leq\! k$ if every antichain in $P$ has at most $k$ elements.  $P$ has \emph{width} $k$ if it has width $\leq\! k$ but not width $\leq\! k-1$.  $P$ has \emph{finite width} if $P$ has width $\leq\! k$ for some $k$.

\medskip

\item $P$ has \emph{height} $\leq\! k$ if every chain in $P$ has at most $k$ elements.  $P$ has \emph{height} $k$ if it has height $\leq\! k$ but not height $\leq\! k-1$.  $P$ has \emph{finite height} if $P$ has height $\leq\! k$ for some $k$.
\end{itemize}
\end{Definition}

We use the \emph{homogeneous} terminology from Ramsey's theorem to describe chains that are as in the conclusion to the Rival--Sands theorem for partial orders.

\begin{Definition}\label{def-ChainHom}
Let $(P, <_P)$ be a partial order.  An infinite chain $C \subseteq P$ is:
\begin{itemize}
\item \emph{$(0,\infty)$-homogeneous} for $P$ if every $p \in P$ is either comparable with no element of $C$ or is comparable with infinitely many elements of $C$;

\medskip

\item \emph{$(0,\cof)$-homogeneous} for $P$ if every $p \in P$ is either comparable with no element of $C$ or is comparable with cofinitely many elements of $C$.
\end{itemize}
\end{Definition}

Every infinite subset of a $(0,\cof)$-homogeneous chain in a partial order is also $(0,\cof)$-homogeneous, but an infinite subset of a $(0,\infty)$-homogeneous chain need not be $(0,\infty)$-homogeneous.  Rival and Sands observed that if $C$ is a chain of order-type $\zeta$ in a partial order $(P, <_P)$, then $C$ is automatically $(0,\infty)$-homogeneous for $P$.  To wit, if $p \in P$ is comparable with some $q \in C$, then either $p \leq_P q$ and hence $p$ is below infinitely many elements of $C$, or $p \geq_P q$ and hence $p$ is above infinitely many elements of $C$.  Similarly, if $C$ is a $(0,\infty)$-homogeneous chain of order-type either $\omega$ or $\omega^*$, then $C$ is automatically $(0,\cof)$-homogeneous.  For example, if $C$ has order-type $\omega$ and $p \in P$ is comparable with infinitely many elements of $C$, then either $p$ is above all elements of $C$, or $p$ is below some element of $C$ and therefore below cofinitely many elements of $C$.

We also apply the \emph{$(0,\infty)$-homogeneous} and \emph{$(0,\cof)$-homogeneous} terminology to sequences.  An infinite sequence $\la x_n : n \in \Nb \ra$ of distinct elements in a partial order $(P, <_P)$ is \emph{$(0,\infty)$-homogeneous} if every $p \in P$ is either comparable with $x_n$ for no $n$ or is comparable with $x_n$ for infinitely many $n$; and it is \emph{$(0,\cof)$-homogeneous} if every $p \in P$ is either comparable with $x_n$ for no $n$ or is comparable with $x_n$ for cofinitely many $n$.  As with chains of order-type $\omega$ and $\omega^*$, an infinite sequence that is $(0,\infty)$-homogeneous and either ascending or descending is automatically $(0,\cof)$-homogeneous, and therefore all of its infinite subsequences are $(0,\cof)$-homogeneous as well.

We introduce several formulations of the Rival--Sands theorem for partial orders.

\begin{Definition}
{\ }
\begin{itemize}
\item $\rspo_k$ is the statement ``Every infinite partial order of width $\leq\! k$ has a $(0, \infty)$-homogeneous chain.''

\medskip

\item $\rspo_{<\infty}$ abbreviates $\forall k \, \rspo_k$.

\medskip

\item $\cofrspo_k$ is the statement ``Every infinite partial order of width $\leq\! k$ has a $(0, \cof)$-homogeneous chain.''

\medskip

\item $\cofrspo_{<\infty}$ abbreviates $\forall k \, \cofrspo_k$.
\end{itemize}
\end{Definition}

Immediately, $\rca \vdash \forall k\, \bigl(\cofrspo_k \imp \rspo_k\bigr)$ and therefore $\rca \vdash \cofrspo_{<\infty} \imp \rspo_{<\infty}$.  Also, Proposition~\ref{prop-COFvsAscDesc} shows that for every $k \geq 2$, $\cofrspo_k$ is equivalent to the statement ``Every infinite partial order of width $\leq\! k$ has a $(0,\infty)$-homogeneous chain of order-type either $\omega$ or $\omega^*$.''

When working with partial orders of finite width, it often helps to decompose the partial order into a finite union of chains.

\begin{Definition}
A \emph{$k$-chain decomposition} of a partial order $(P, <_P)$ is a collection of $k$ chains $C_0, C_1, \dots, C_{k-1} \subseteq P$ where $P = \bigcup_{i < k} C_i$.  If $P$ has a $k$-chain decomposition, then it is called \emph{$k$-chain decomposable}.\end{Definition}

We emphasize that if a partial order $P$ is assumed to be $k$-chain decomposable, then $P$ comes along with a $k$-chain decomposition.  Of course, we may always assume that the chains of a chain decomposition are pairwise disjoint.

Recall now Dilworth's theorem, which in this terminology states that for every $k$, every partial order of width $\leq\! k$ is $k$-chain decomposable.  Hirst~\cite{HirstThesis} shows that Dilworth's theorem for countable partial orders is equivalent to $\wkl$ over $\rca$.  He also shows that Dilworth's theorem remains equivalent to $\wkl$ even when restricted to partial orders of width $2$.  It follows that there is a recursive partial order of width $2$ that cannot be decomposed into $2$ recursive chains.  Thus Dilworth's theorem is not available when working in $\rca + \isig{2} + \ads$ because $\rca + \isig{2} + \ads \nvdash \wkl$.  However, for our purposes it is not necessary to decompose a partial order of finite width into the optimal number of chains---any decomposition into finitely many chains will do.  Thus we replace Dilworth's theorem by Kierstead's effective analog, which states that every recursive partial order of width $\leq\! k$ can be decomposed into at most $(5^k - 1)/4$ recursive chains~\cite{KiersteadDilworth}.  Nowadays much better sub-exponential bounds are known for the number of recursive chains into which a recursive partial order of finite width can be decomposed~\cite{BosekKiersteadKrawczykMateckiSmith}.

Let $(P, <_P)$ be a partial order of width $\leq\! k$.  Kierstead's proof is phrased as an induction on $k$.  By unwinding the induction, the proof can be viewed as an on-line algorithm computing a sequence of partial orders $(P, <_P) = (P_0, <_{P_0}), (P_1, <_{P_1}), \dots, (P_{k-2}, <_{P_{k-2}})$, where $P_i$ has width $\leq\! k-i$ for each $i \leq k-2$, together with a $(5^{k-i} - 1)/4$-chain decomposition of $(P_i, <_{P_i})$ for each $i \leq k-2$.  Kierstead himself comments along these lines following the proof of~\cite{KiersteadDilworth}*{Theorem~1.10}.  With this view, it is possible to verify that Kierstead's theorem is provable in $\rca$.

\begin{Theorem}[essentially Kierstead~\cite{KiersteadDilworth}]\label{thm-Kierstead}
$\rca$ proves the statement ``For every $k$, every partial order of width $\leq\! k$ is $(5^k - 1)/4$-chain decomposable.''
\end{Theorem}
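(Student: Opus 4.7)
My plan is to formalize Kierstead's on-line chain-partition algorithm within $\rca$, as the excerpt suggests following Theorem~1.10 of \cite{KiersteadDilworth}. I would argue by induction on $k$ that every partial order of width $\leq k$ admits a decomposition into $f(k) := (5^k - 1)/4$ chains, with the algorithm being uniform in both $k$ and the input $P$.

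The base case $k = 1$ is immediate: a partial order of width $\leq 1$ is itself a chain, and $f(1) = 1$. For the inductive step, I would describe the algorithm $A_{k+1}$ on an input $(P, <_P)$ of width $\leq k+1$ as processing the elements $p_0, p_1, p_2, \dots$ in the natural $\Nb$-order, maintaining $5 f(k) + 1$ labelled chains and assigning each new $p_s$ to the first chain it can extend according to Kierstead's routing rules. The 5-to-1 recurrence arises because, after reserving a single ``anchor'' chain greedily, the remaining elements split into at most four categories according to their comparability profile with the already-assigned history, and each category is handled by a simulated copy of $A_k$. Thus $f(k+1) = 5 f(k) + 1$, which solves to $(5^{k+1} - 1)/4$.

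The main obstacle is metamathematical. Read literally, ``for every $k$, every width-$\leq k$ partial order is $f(k)$-chain decomposable'' is a $\Pi^1_2$ assertion, and direct induction on $k$ with such a predicate is not available from $\isig{1}$. The standard workaround is to exploit the fact that the algorithm $A$ depends uniformly on $k$: once $A$ is written as a single computable procedure, its correctness on a given $P$ of width $\leq k$ reduces to a conjunction of $\Pi^0_1$ invariants preserved at each stage (``each labelled class is pairwise comparable'' and ``no stage ever demands a chain beyond label $f(k)$''), and these can be maintained by $\isig{1}$ induction on the stage $s$ of the computation. The subtlety I would spend the most time checking is exactly this uniformization: verifying that Kierstead's routing decisions can be performed $\Delta^0_1$ from $P$, $k$, and the partial assignment produced so far, and that the bound $5 f(k) + 1$ is provably never exceeded without invoking induction stronger than $\isig{1}$.
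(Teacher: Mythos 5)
Your overall strategy matches the paper's: we likewise give no detailed proof of Theorem~\ref{thm-Kierstead}, but justify it by exactly the move you describe in your last paragraph --- unwinding Kierstead's induction on $k$ into a single on-line algorithm, uniform in $k$ and in $P$ (in our phrasing, an algorithm that simultaneously computes a sequence of partial orders $P_0, \dots, P_{k-2}$ of decreasing width together with their chain decompositions), whose correctness is then verified inside $\rca$ by induction on the stage of the computation rather than by induction on $k$. You have correctly isolated the one genuinely metamathematical point: the statement is $\Pi^1_2$ in $k$, so the induction must be pushed down to $\Sigma^0_1$/$\Pi^0_1$ invariants about finite initial segments of the run.

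There is, however, a concrete arithmetic slip in your account of the combinatorial core, and it lands exactly where the details need checking. You state the recurrence $f(k+1) = 5f(k)+1$, which is the right one since $(5^{k+1}-1)/4 = 5\cdot(5^k-1)/4 + 1$, but you derive it from ``one anchor chain plus at most four categories, each handled by a simulated copy of $A_k$,'' which accounts for only $4f(k)+1$ chains. To obtain $5f(k)+1$ the residue left after removing the distinguished chain must split into \emph{five} pieces each of width $\leq k$, not four; as written, your inductive step either fails to cover $P$ or claims a strictly better bound than Kierstead's, and in neither case does it establish the stated recurrence. Since everything beyond the uniformization is precisely this bookkeeping, the proposal does not close as it stands. (For our applications any primitive recursive bound would do, as we note after the theorem, so the slip is harmless downstream --- but the theorem as stated asserts the specific bound $(5^k-1)/4$, and that is what your recurrence must actually deliver.)
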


For notational ease, we work with $5^k$-chain decompositions in place of $(5^k - 1)/4$-chain decompositions.  Again, for our purposes, any primitive recursive bound suffices.

For completeness, we mention that there is a dual version of Dilworth's theorem, due to Mirsky, which states that every partial order of height $\leq\! k$ can be decomposed into a union of $k$ antichains.  Hirst proved that Mirsky's theorem for countable partial orders is equivalent to $\wkl$ over $\rca$~\cite{HirstThesis}.  There is also an effective analog of Mirsky's theorem in the spirit of Theorem~\ref{thm-Kierstead}~\cite{KiersteadOrderedSets}.

We now formalize versions of $\rspo$ with the assumption ``width $\leq\! k$'' replaced by ``$k$-chain decomposable.''

\begin{Definition}
{\ }
\begin{itemize}
\item $\rspocd_k$ is the statement ``Every infinite $k$-chain decomposable partial order has a $(0, \infty)$-homogeneous chain.''

\medskip

\item $\rspocd_{<\infty}$ abbreviates $\forall k \, \rspocd_k$.

\medskip

\item $\cofrspocd_k$ is the statement ``Every infinite $k$-chain decomposable partial order has a $(0, \cof)$-homogeneous chain.''

\medskip

\item $\cofrspocd_{<\infty}$ abbreviates $\forall k \, \cofrspocd_k$.
\end{itemize}
\end{Definition}

We have that $\rca \vdash \forall k \, \bigl((\rspocd_{5^k} \imp \rspo_k) \;\andd\; (\rspo_k \imp \rspocd_k)\bigr)$ and analogously for the $(0,\cof)$-homogeneous versions by Theorem~\ref{thm-Kierstead} and the fact that $k$-chain decomposable partial orders have width $\leq\! k$.  It follows that $\rca \vdash \rspo_{<\infty} \biimp \rspocd_{<\infty}$ and that $\rca \vdash \cofrspo_{<\infty} \biimp \cofrspocd_{<\infty}$.  Additionally, $\wkl \vdash \forall k \, (\rspo_k \biimp \rspocd_k)$ and analogously for the $(0,\cof)$-homogeneous versions because $\wkl$ proves Dilworth's theorem.

We conclude this section with a few other useful applications of Theorem~\ref{thm-Kierstead}.  If $(P, <_P)$ is an infinite partial order of width $\leq\! k$, then $\cac$ implies that $P$ contains an infinite chain because $P$ does not contain an infinite antichain.  However, we may argue more effectively by instead applying Theorem~\ref{thm-Kierstead} to $P$ to obtain a $5^k$-chain decomposition of $P$ and then by applying the pigeonhole principle to conclude that one of these chains must be infinite.  Dually, $\cac$ implies that an infinite partial order of height $\leq\! k$ contains an infinite antichain, and this fact can be effectivized as well.  We show that these special cases of $\cac$ are provable in $\rca$ for each fixed $k$ and are equivalent to $\bsig{2}$ over $\rca$ for arbitrary $k$.

\begin{Definition}
{\ }
\begin{itemize}
\item $\cc_k$ is the statement ``Every infinite partial order of width $\leq\! k$ has an infinite chain.''

\medskip

\item $\cc_{<\infty}$ abbreviates $\forall k \, \cc_k$.

\medskip

\item $\ca_k$ is the statement ``Every infinite partial order of height $\leq\! k$ has an infinite antichain.''

\medskip

\item $\ca_{<\infty}$ abbreviates $\forall k \, \ca_k$.
\end{itemize}
\end{Definition}

\begin{Proposition}\label{prop-CC-CA}
{\ }
\begin{enumerate}[(1)]
\item For each fixed standard $k$, $\rca \vdash \cc_k$ and $\rca \vdash \ca_k$.

\medskip

\item\label{it-CC-CA-BSig2} $\cc_{<\infty}$, $\ca_{<\infty}$, and $\bsig{2}$ are pairwise equivalent over $\rca$.
\end{enumerate}
\end{Proposition}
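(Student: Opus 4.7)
The plan is to derive (1) from the effective decomposition theorems combined with finite pigeonhole, and to prove (2) as the chain of equivalences $\cc_{<\infty} \biimp \bsig{2} \biimp \ca_{<\infty}$ via the equivalence $\bsig{2} \biimp \rt^1_{<\infty}$ noted in Section~\ref{sec-background}. For (1), fix a standard $k$ and let $(P, <_P)$ be an infinite partial order of width $\leq\! k$. Apply Kierstead's theorem (Theorem~\ref{thm-Kierstead}) to obtain a decomposition of $P$ into at most $5^k$ chains. Since $5^k$ is then a fixed standard natural number, $\rca$ proves the pigeonhole principle $\rt^1_{5^k}$, so one of these chains must be infinite; this yields $\cc_k$. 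For $\ca_k$, I would use the effective analog of Mirsky's theorem cited in Section~\ref{sec-Dilworth} to decompose an infinite partial order of height $\leq\! k$ into a primitive-recursively-bounded number of antichains, and conclude in the same way using $\rt^1$ for a fixed standard number of colours.

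For (2), the forward directions $\bsig{2} \imp \cc_{<\infty}$ and $\bsig{2} \imp \ca_{<\infty}$ proceed as in (1), except that now $k$ is arbitrary and the instances of $\rt^1_m$ that $\rca$ proves outright are replaced by instances of $\rt^1_{<\infty}$ supplied by $\bsig{2}$. For the reversals, I would code an arbitrary colouring $c \colon \Nb \imp k$ as a partial order on $\Nb$. For $\cc_{<\infty} \imp \rt^1_{<\infty}$, set $m <_P n$ iff $m <_\Nb n$ and $c(m) = c(n)$; this is a partial order whose chains are exactly the monochromatic subsets of $\Nb$ and whose antichains contain at most one element per colour, so have width $\leq\! k$. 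Applying $\cc_k$ produces an infinite monochromatic set, yielding $\rt^1_k$. For $\ca_{<\infty} \imp \rt^1_{<\infty}$, set $m <_P n$ iff $c(m) <_\Nb c(n)$; now chains are strictly colour-increasing (so height $\leq\! k$), while antichains are exactly the monochromatic subsets of $\Nb$, and so $\ca_k$ directly yields $\rt^1_k$. Letting $k$ vary gives $\rt^1_{<\infty}$, hence $\bsig{2}$.

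No deep obstacle arises: the only care needed is to verify that the two coding relations are genuine partial orders (irreflexivity, antisymmetry, and transitivity are immediate in both constructions from the properties of $=$ and $<_\Nb$ on colours) and to keep track of whether the parameter $k$ is a fixed standard number, in which case $\rca$ suffices, or ranges over all naturals, in which case $\bsig{2}$ is required to invoke pigeonhole.
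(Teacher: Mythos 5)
Your proof of $\cc_k$, of both forward directions of (2), and of both reversals in (2) matches the paper's argument essentially step for step: Kierstead's theorem plus pigeonhole for the forward directions, and the two codings of a colouring $c \colon \Nb \imp k$ as a width-$\leq\! k$ order ($m <_P n$ iff $m <_\Nb n$ and $c(m) = c(n)$) and as a height-$\leq\! k$ order ($m <_P n$ iff $c(m) <_\Nb c(n)$) for the reversals. The one place you diverge is $\ca_k$ (and hence $\ca_{<\infty}$ under $\bsig{2}$): you invoke the effective analog of Mirsky's theorem, which the paper only mentions in passing with a citation and never formalizes in $\rca$ --- the paper verifies only Kierstead's chain-decomposition theorem in $\rca$, and explicitly says it gives ``a direct proof instead of appealing to an effective analog of Mirsky's theorem.'' Your route would work if that analog were checked to be provable in $\rca$, but as written it rests on an unverified formalization. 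The paper's direct argument is short and worth knowing: given $(P, <_P)$ of height $\leq\! k$, colour each $p$ by the pair $\la x, y \ra$ where $x$ is the greatest size of a $<_P$-chain in $\{q <_\Nb p : q <_P p\}$ and $y$ the greatest size of a $<_P$-chain in $\{q <_\Nb p : p <_P q\}$; both coordinates are below $k$, and any two $<_P$-comparable elements get different colours (whichever is $<_\Nb$-larger sees a strictly longer chain on the appropriate side), so an $\rt^1_{k^2}$-homogeneous set is an infinite antichain. This buys a self-contained proof in $\rca$ with no appeal to an external decomposition theorem, at the cost of a slightly less symmetric presentation than the Dilworth/Mirsky duality you were aiming for.
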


\begin{proof}
For $\rca \vdash \cc_k$, let $(P, <_P)$ be an infinite partial order of width $\leq\! k$.  By Theorem~\ref{thm-Kierstead}, $P$ has a $5^k$-chain decomposition, and at least one of these chains is infinite by $\rt^1_{5^k}$.  Thus $P$ has an infinite chain.  The proof that $\rca + \bsig{2} \vdash \cc_{<\infty}$ is the same, except we must use $\rt^1_{<\infty}$ instead of $\rt^1_{5^k}$ because now $k$ is not fixed in advance.

For $\rca \vdash \ca_k$, we give a direct proof instead of appealing to an effective analog of Mirsky's theorem.  Let $(P, <_P)$ be an infinite partial order of height $\leq\! k$.  Define a coloring $c \colon P \imp k^2$ by $c(p) = \la x, y \ra$, where $x$ is the greatest size of a $<_P$-chain in $\{q <_\Nb p : q <_P p\}$ and $y$ is the greatest size of a $<_P$-chain in $\{q <_\Nb p : p <_P q\}$.  Both $x$ and $y$ are less than $k$ because $P$ has height $\leq\! k$.  By $\rt^1_{k^2}$, let $H \subseteq P$ be an infinite set that is homogeneous for $c$.  We claim that $H$ is an antichain.  Suppose for a contradiction that $a, b \in H$ and $a <_P b$.  If $a <_\Nb b$ and $q_0 <_P q_1 <_P \cdots <_P q_{n-1}$ is a chain in $\{q <_\Nb a : q <_P a\}$, then $q_0 <_P q_1 <_P \cdots <_P q_{n-1} <_P a$ is a chain in $\{q <_\Nb b : q <_P b\}$.  This means that if $x$ is the maximum size of a chain in $\{q <_\Nb a : q <_P a\}$, then the maximum size of a chain in $\{q <_\Nb b : q <_P b\}$ is at least $x+1$.  So $c(a) \neq c(b)$, contradicting that $H$ is homogeneous.  Similar reasoning shows that if $b <_\Nb a$, then $c(a) \neq c(b)$ as well.  Therefore $H$ is an infinite antichain in $P$.  The proof that $\rca + \bsig{2} \vdash \ca_{<\infty}$ is the same, except we must use $\rt^1_{<\infty}$ instead of $\rt^1_{k^2}$ because now $k$ is not fixed in advance.

For $\rca + \cc_{<\infty} \vdash \bsig{2}$, let $c \colon \Nb \imp k$ be a $k$-coloring, and define a partial order $(P, <_P)$ with $P = \Nb$ by setting $p <_P q$ if and only if $p <_\Nb q$ and $c(p) = c(q)$.  The partial order $P$ has width $\leq\! k$, so by $\cc_{<\infty}$ it has an infinite chain $H$.  By the definition of $<_P$, $H$ must be homogeneous for $c$.  Therefore $\rt^1_{<\infty}$ holds.

For $\rca + \ca_{<\infty} \vdash \bsig{2}$, let $c \colon \Nb \imp k$ be a $k$-coloring, and define a partial order $(P, <_P)$ with $P = \Nb$ by setting $p <_P q$ if and only if $c(p) < c(q)$.  The partial order $P$ has height $\leq\! k$, so by $\ca_{<\infty}$ it has an infinite antichain $H$.  By the definition of $<_P$, $H$ must be homogeneous for $c$.  Therefore $\rt^1_{<\infty}$ holds.
\end{proof}

We now show that $\ads$ is equivalent to the statement ``Every infinite partial order of finite width contains either an infinite ascending sequence or an infinite descending sequence.''

\begin{Proposition}
The following are equivalent over $\rca$.
\begin{enumerate}[(1)]
\item\label{it-ADSFinWidth} $\ads$.

\medskip

\item\label{it-FinWidthADS} Every infinite partial order of finite width contains either an infinite ascending sequence or an infinite descending sequence.

\medskip

\item\label{it-FixedWidthADS}  For each fixed standard $k \geq 1$, the statement ``Every infinite partial order of width $\leq\! k$ contains either an infinite ascending sequence or an infinite descending sequence.''
\end{enumerate}
\end{Proposition}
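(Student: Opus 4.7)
The plan is to prove the cycle (1) $\imp$ (2) $\imp$ (3) $\imp$ (1) of implications, which together give the three-way equivalence.

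The implication (2) $\imp$ (3) is immediate, since any partial order of width $\leq k$ has finite width. For (3) $\imp$ (1), apply (3) with $k=1$: the infinite partial orders of width $\leq 1$ are exactly the infinite linear orders, so this special case of (3) is precisely $\ads$.

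The substantive direction is (1) $\imp$ (2). Let $(P, <_P)$ be an infinite partial order of finite width, and fix $k$ with $P$ of width $\leq k$. Since $\ads$ implies $\sads$, which in turn implies $\bsig{2}$ (see Figure~\ref{fig-summary}), we have $\cc_{<\infty}$ available by Proposition~\ref{prop-CC-CA}\,(2). Applying $\cc_{<\infty}$ to $P$ produces an infinite chain $C \subseteq P$. The structure $(C, <_P \rst C)$ is then an infinite linear order, so by $\ads$ it contains an infinite sequence $\la x_n : n \in \Nb \ra$ that is either ascending or descending with respect to $<_P \rst C$. Since $<_P \rst C$ agrees with $<_P$ on $C$, this sequence is also ascending or descending in $(P, <_P)$, as required.

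The only subtlety concerns the direction (1) $\imp$ (2): because $k$ is a parameter rather than a fixed standard natural number, one cannot directly apply $\rt^1_{5^k}$ to extract an infinite chain from Kierstead's $5^k$-chain decomposition (Theorem~\ref{thm-Kierstead}) in $\rca$ alone. This is circumvented by packaging that extraction into $\cc_{<\infty}$, whose hypothesis $\bsig{2}$ is supplied by $\ads$.
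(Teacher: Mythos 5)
Your proposal is correct and follows essentially the same route as the paper: the substantive direction (1)~$\Imp$~(2) is handled by invoking $\bsig{2}$ (a consequence of $\ads$) to obtain $\cc_{<\infty}$ from Proposition~\ref{prop-CC-CA}, extracting an infinite chain, and then applying $\ads$ to that chain, while (2)~$\Imp$~(3) and (3)~$\Imp$~(1) are immediate for the reasons you give. Your closing remark correctly identifies why the detour through $\cc_{<\infty}$ rather than a direct appeal to $\rt^1_{5^k}$ is needed when $k$ is not fixed in advance.
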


\begin{proof}
For \ref{it-ADSFinWidth}~$\Imp$~\ref{it-FinWidthADS}, let $(P, <_P)$ be an infinite partial order of width $\leq\! k$ for some $k$.  Using the fact that $\rca + \ads \vdash \bsig{2}$, we may appeal to Proposition~\ref{prop-CC-CA} item~\ref{it-CC-CA-BSig2} and apply $\cc_{<\infty}$ to $P$ to obtain an infinite chain $C$ in $P$.  Now apply $\ads$ to $C$ to obtain either an infinite ascending sequence in $C$ or an infinite descending sequence in $C$.

The implications \ref{it-FinWidthADS}~$\Imp$~\ref{it-FixedWidthADS} and \ref{it-FixedWidthADS}~$\Imp$~\ref{it-ADSFinWidth} are immediate because every partial order of width $\leq\! k$ has finite width, and every infinite linear order is an infinite partial order of width $1$.
\end{proof}

\section{First proofs of \texorpdfstring{$\rspo_{<\infty}$}{RSpo\_(< infinity)} and \texorpdfstring{$\cofrspo_{<\infty}$}{(0,cof)-RSpo\_(< infinity)}}\label{sec-FirstProofs}

We give a proof of $\rspo_{<\infty}$ in $\aca$ and a proof of $\cofrspo_{<\infty}$ in $\pica$.  The proofs are not axiomatically optimal, but they can be presented in ordinary mathematical language---meaning without reference to relative computability, technical uses of restricted induction, etc.---and are straightforward to formalize.  The $\aca$ proof in particular strikes a good balance between axiomatic simplicity and conceptual simplicity.  It is based on Dilworth's theorem, the fact that chains of order-type $\zeta$ are automatically $(0,\infty)$-homogeneous, and the observation that a linear order containing no suborder of type $\zeta$ can be partitioned into a well-founded part and a reverse well-founded part.  This last observation requires the full strength of $\aca$, as shown by Lemma~\ref{lem-ChainSplitting}.  This section also serves to introduce key concepts that will be refined in the next section to prove $\rspo_{<\infty}$ in $\rca + \isig{2} + \ads$.

\begin{Definition}\label{def-poDefs}
Let $(P, <_P)$ be a partial order, and let $X, Y \subseteq P$.
\begin{itemize}
\item Write $X <_P Y$ if every element of $X$ is strictly below every element of $Y$:  $\forall x \in X \; \forall y \in Y \; (x <_P y)$.  In the case of singletons, write $x <_P Y$ and $X <_P y$ in place of $\{x\} <_P Y$ and $X <_P \{y\}$.

\medskip

\item Write $X \pwb Y$ if every element of $X$ is below some element of $Y$:  $\forall x \in X \; \exists y \in Y \; (x \leq_P y)$.

\medskip

\item Write $X \mid_P Y$ if every element of $X$ is incomparable with every element of $Y$:  $\forall x \in X \; \forall y \in Y \; (x \mid_P y)$.  In the case of singletons, write $x \mid_P Y$ in place of $\{x\} \mid_P Y$.
\end{itemize}
\end{Definition}

We also extend the notation of Definition~\ref{def-poDefs} to sequences.  For example, if $A = \la a_n : n \in \Nb \ra$ and $B = \la b_n : n \in \Nb \ra$ are sequences in a partial order $(P, <_P)$, then we write $A \pwb B$ if $\forall n \, \exists m \, (a_n \leq_P b_m)$.

\begin{Definition}
Let $(P, <_P)$ be a partial order, and let $X \subseteq P$.
\begin{itemize}
\item  Let $X\da = \{p \in P : \exists x \in X \; (p \leq_P x)\}$ and $X\ua = \{p \in P : \exists x \in X \; (p \geq_P x)\}$ denote the downward and upward closures of $X$ in $P$, respectively. These sets may be formed in $\aca$.

\medskip

\item  In the case of singletons $X = \{x\}$, write $x\da$ and $x\ua$ in place of $\{x\}\da$ and $\{x\}\ua$.  These sets may be formed in $\rca$.

\medskip

\item Call $X$ \emph{well-founded} if it contains no infinite descending sequence.  Otherwise call $X$ \emph{ill-founded}.  Likewise, call $X$ \emph{reverse well-founded} if it contains no infinite ascending sequence.  Otherwise call $X$ \emph{reverse ill-founded}.
\end{itemize}
\end{Definition}

For a partial order $(P, <_P)$ and non-empty subsets $X, Y, Z \subseteq P$, $\rca$ suffices to show that $X <_P Y <_P Z$ implies $X <_P Z$ and that $X \pwb Y \pwb Z$ implies $X \pwb Z$.  Also, notice that $X \pwb Y$ simply means that $X \subseteq Y\da$, but beware that forming the set $Y\da$ requires $\aca$ in general.

As mentioned above, we show that partitioning a linear order with no suborder of type $\zeta$ into a well-founded part and a reverse well-founded part is equivalent to $\aca$ over $\rca$.  More generally, we show that isolating the well-founded part of a partial order that contains no infinite antichain and no suborder of type $\zeta$ is equivalent to $\aca$ over $\rca$.  Finding the well-founded part of such a partial order is used to extend $\rspo_{<\infty}$ to infinite partial orders without infinite antichains in Section~\ref{sec-ExtendedRSpo}.  The reversal exploits the tool of \emph{true and false numbers} of an injection $f \colon \Nb \imp \Nb$.

\begin{Definition}\label{def-true}
Let $f \colon \Nb \imp \Nb$ be an injection.  An $n \in \Nb$ is a \emph{true number} if $\forall k > n \; (f(n) < f(k))$, and otherwise $n$ is a \emph{false number}.  Additionally, an $n \in \Nb$ is \emph{true at stage $m$} if $\forall k \, (n < k \leq m \;\imp\; f(n) < f(k))$, and otherwise $n$ is \emph{false at stage $m$}.
\end{Definition}

The idea of true numbers appears to have originated with Dekker~\cite{Dekker}, who called them \emph{minimal}.  True numbers are important because the range of $f$ is recursive in the join of $f$ with any infinite set of true numbers. In fact, if $n$ is a true number, then one can determine $\ran(f)$ up to $f(n)$ by simply evaluating $f$ on inputs $0, \dots, n$.  In reverse mathematics, true numbers facilitate reversals to $\aca$.  To prove that some statement $\varphi$ implies $\aca$ over $\rca$, one strategy is to let $f \colon \Nb \to \Nb$ be an injection, use $\varphi$ to produce an infinite set $S$ of true numbers, use $f$ and $S$ to compute $\ran(f)$, and then apply Lemma~\ref{lem-ACAinjection}.  For example, this strategy is used in~\cites{MarconeShore, FrittaionHendtlassMarconeShaferVanderMeeren, FrittaionMarcone} in the form of the following well-known construction.

\begin{Construction}\label{const-TFstages}
Let $f\colon \Nb \to \Nb$ be an injection. Define a linear order $(L, <_L)$ where $L = \{\ell_n : n \in \Nb\}$ and for each $n < m$ the following hold:
\begin{enumerate}[(1)]
\item $\ell_n <_L \ell_m$ if $f(k) < f(n)$ for some $k$ with $n <k \leq m$ (i.e., $n$ is false at stage $m$), and

\medskip

\item $\ell_m <_L \ell_n$ if $f(n) < f(k)$ for all $k$ with $n < k \leq m$ (i.e., $n$ is true at stage $m$).
\end{enumerate}
This construction can be carried out in $\rca$.
\end{Construction}

Given an injection $f \colon \Nb \imp \Nb$, Construction~\ref{const-TFstages} produces a stable linear order either of type $\omega + \omega^*$ (if $f$ has infinitely many false numbers) or of type $k + \omega^*$ for some finite $k$ (otherwise).  $\rca$ proves that $n$ is true if and only if $n$ is in the $\omega^*$-part of $L$.  Therefore, $\rca$ proves that if there is an infinite subset of the $\omega^*$-part of $L$, or, equivalently, if there is an infinite descending sequence in $L$, then the range of $f$ exists.  For further details, see the proofs of~\cite{MarconeShore}*{Lemma~4.2} and~\cite{FrittaionMarcone}*{Theorem~4.5}.

\begin{Lemma}\label{lem-ChainSplitting}
The following are equivalent over $\rca$.
\begin{enumerate}[(1)]
\item\label{it-ACACS} $\aca$.

\medskip

\item\label{it-poSplitWF} For every partial order $(P, <_P)$, if $P$ has no infinite antichain and no suborder of type $\zeta$, then there is a set $W \subseteq P$ such that $\forall p \in P \; (p \in W \;\biimp\; \textup{$p\da$ is well-founded})$.

\medskip

\item\label{it-ChainSplit} Every linear order $(L, <_L)$ with no suborder of type $\zeta$ can be partitioned as $L = W \cup R$, where
\begin{itemize}
\item $W <_L R$,
\item $W$ is well-founded, and
\item $R$ is reverse well-founded.
\end{itemize}
\end{enumerate}
\end{Lemma}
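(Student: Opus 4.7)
The plan is to prove the cycle (2)~$\Imp$~(3)~$\Imp$~(1)~$\Imp$~(2), reserving (1)~$\Imp$~(2) as the main work.

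For (2)~$\Imp$~(3), I view the linear order $L$ as a partial order---trivially free of infinite antichains---apply (2) to obtain $W = \{p \in L : p\da\text{ is well-founded}\}$, and set $R = L \setminus W$. Three short verifications suffice: $W <_L R$ (an inversion $r \leq_L w$ with $r \in R$, $w \in W$ forces $r\da \subseteq w\da$ to be well-founded, hence $r \in W$); $W$ is well-founded (a descending sequence in $W$ would descend inside $w_0\da$ for its top element, contradicting $w_0 \in W$); and $R$ is reverse well-founded (an ascending sequence $r_0 <_L r_1 <_L \cdots$ in $R$ combined with an infinite descending sequence inside $r_0\da$, which exists since $r_0 \in R$, produces a $\zeta$-suborder, violating the hypothesis).

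For (3)~$\Imp$~(1), given an injection $f\colon\Nb\imp\Nb$, I apply Construction~\ref{const-TFstages} to obtain a linear order $L$ of order type $\omega + \omega^*$ or $k + \omega^*$, in which the true numbers of $f$ form the descending $\omega^*$-part lying $<_L$-above the ascending $\omega$-part of false numbers; hence $L$ has no $\zeta$-suborder. Applying (3) gives $L = W \cup R$; since $W$ is downward closed and well-founded in $L$, it contains no element of the $\omega^*$-part (else its tail would witness non-well-foundedness), so $R$ contains all true numbers. As $f$ is strictly increasing and unbounded on true numbers, for each $n$ one finds $m \in R$ with $f(m) > n$, whence $n \in \ran(f) \biimp n \in \{f(0), \dots, f(m)\}$. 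Thus $\ran(f)$ exists, and Lemma~\ref{lem-ACAinjection} delivers $\aca$.

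The heart of the argument is (1)~$\Imp$~(2). In $\aca$, which supplies $\rt^2_2$, $\cac$, $\ads$, and K\"onig's lemma, I need to realize $W = \{p : p\da\text{ is well-founded}\}$ as an arithmetical set, since its naive $\Pi^1_1$ definition is unavailable. The plan is to use the two hypotheses in tandem: $\cac$ forces every infinite subset of $p\da$ to contain an infinite chain; $\ads$ sorts each such chain into an ascending or descending subsequence; and the no-$\zeta$-suborder hypothesis constrains how ascending and descending behavior may coexist. I expect to organize descending-witness searches below $p$ into a tree which, thanks to the no-$\zeta$-suborder hypothesis, becomes finitely branching, so that K\"onig's lemma identifies the ill-founded $p\da$ arithmetically. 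The main obstacle is designing this tree precisely---ensuring simultaneously that its infinite paths correspond exactly to infinite descending sequences in $p\da$ and that the no-$\zeta$-suborder hypothesis forbids infinite branching---after which arithmetical comprehension produces~$W$.
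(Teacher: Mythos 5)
Your cycle of implications is the same as the paper's, and your (2)~$\Imp$~(3) argument is correct and essentially identical to the paper's. The other two directions each have a problem, one of them fatal as written.

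The serious gap is in (3)~$\Imp$~(1). You apply item~\ref{it-ChainSplit} directly to the linear order $L$ of Construction~\ref{const-TFstages} and assert that $W$ ``contains no element of the $\omega^*$-part (else its tail would witness non-well-foundedness), so $R$ contains all true numbers.'' But ``well-founded'' means that no infinite descending sequence \emph{exists as a set in the model}, and every infinite descending sequence in $L$ must eventually lie in the $\omega^*$-part, i.e.\ consist of true numbers --- which is exactly the set whose existence is at stake. Concretely, take $f$ computable with non-computable range and work in the $\omega$-model of computable sets: any computable infinite descending sequence in $L$ would yield an infinite computable set of true numbers and hence compute $\ran(f)$, so none exists, and the degenerate partition $W = L$, $R = \emptyset$ satisfies every clause of item~\ref{it-ChainSplit}. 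Your proof uses item~\ref{it-ChainSplit} only at this one instance, and that instance is computably true, so no argument from it can recover $\ran(f)$. This is precisely why the paper does not apply item~\ref{it-ChainSplit} to $L$ itself but to an auxiliary linear order $S$ obtained by replacing $\ell_n$ with a block $\{s_{n,m} : \text{$n$ true at stage $m$}\}$ descending in $m$: a genuinely true $n$ then carries an explicit computable infinite descending sequence, forcing $s_{n,0} \notin W$, while a false $n$ carries only a finite block and is forced into $W$ by a computable ascending sequence of false elements above it. Some such padding device is unavoidable here.

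In (1)~$\Imp$~(2) you correctly identify that the naive $\Sigma^1_1$ definition of $W$ must be arithmetized via a finitely branching tree and K\"onig's lemma, but you stop at ``the main obstacle is designing this tree precisely,'' which is where the content lies. The missing idea is to take $T_p$ to consist of the \emph{discrete} descending sequences from $p$, i.e.\ those with no element of $P$ strictly between consecutive terms: the possible successors of a node then form an antichain, so finite branching comes from the no-infinite-antichain hypothesis (not, as you suggest, from the no-$\zeta$-suborder hypothesis), and K\"onig's lemma turns an infinite $T_p$ into an infinite descending sequence below $p$. The no-$\zeta$ hypothesis enters only in the converse direction: if $T_p$ is finite yet $p\da$ is ill-founded, one builds an ascending sequence trapped below the last term of a maximal-length element of $T_p$, which together with a tail of the given descending sequence forms a suborder of type $\zeta$. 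The appeals to $\cac$, $\ads$, and $\rt^2_2$ play no role in this argument. As written, this direction is a plan rather than a proof.
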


\begin{proof}
For \ref{it-ACACS}~$\Imp$~\ref{it-poSplitWF}, let $(P, <_P)$ be a partial order with no infinite antichain and no suborder of type $\zeta$.  For the purposes of this proof, call a descending sequence $p_0 >_P p_1 >_P \cdots >_P p_n$ in $P$ \emph{discrete} if for all $i < n$ there is no element of $P$ strictly between $p_i$ and $p_{i+1}$.  Define a sequence of trees $\la T_p : p \in P \ra$, where for each $p \in P$, $T_p$ consists of the finite discrete descending sequences starting at $p$.  That is, for each $p \in P$, $T_p$ consists of the $\sigma \in P^{<\Nb}$ such that
\begin{align*}
(|\sigma| &> 0 \;\imp\; \sigma(0) = p)\\
&\andd \quad \forall i < |\sigma|-1 \; \Bigl(\sigma(i+1) <_P \sigma(i) \;\andd\; \neg \exists x \in P \; \bigl(\sigma(i+1) <_P x <_P \sigma(i)\bigr)\Bigr).
\end{align*}
Let $W = \{p \in P : \text{$T_p$ is finite}\}$.  We show that $p \in W$ if and only if $p\da$ is well-founded.

First, consider a $p \in P$ where $T_p$ is infinite.  Given any $\sigma \in T_p$, the set $\{q \in P : \sigma^\smf q \in T_p\}$ is an antichain on account of the discreteness condition on the elements of $T_p$ and therefore is finite because $P$ has no infinite antichain.  Thus $T$ is an infinite finitely branching tree, and therefore $T$ has an infinite path $f$ by K\"{o}nig's lemma.  This path provides an infinite descending sequence in $P$ below $p$, so $p\da$ is ill-founded.

Conversely, consider a $p \in P$ where $T_p$ is finite.  Suppose for a contradiction that $p\da$ is ill-founded, and let $p = q_0 >_P q_1 >_P q_2 >_P \cdots$ be an infinite descending sequence below $P$.  Notice that $\sigma = \la p \ra$ is in $T_p$ and satisfies $\sigma(0) >_P q_1$.  Let $\sigma \in T_p$ be a non-empty sequence of maximum length for which there is an $n$ such that $\sigma(|\sigma|-1) >_P q_n$.  Such a $\sigma$ exists because $T_p$ is finite.  Now define an infinite ascending sequence $A = \la a_i : i \in \Nb \ra$ with $\sigma(|\sigma|-1) >_P a_i \geq_P q_n$ for all $i$ as follows.  Let $a_0 = q_n$.  Given $a_i$, let $a_{i+1}$ be the $<$-least element of $P$ with $\sigma(|\sigma|-1) >_P a_{i+1} >_P a_i$.  Such an $a_{i+1}$ exists because otherwise we would have $\sigma^\smf a_i \in T_p$ and $a_i >_P q_{n+1}$, which contradicts that $\sigma$ has maximum length.  We now have that $\{a_i : i \in \Nb\} \cup \{q_i : i > n\}$ is a chain in $P$ of order-type $\zeta$, which is a contradiction.  Thus $p\da$ is well-founded, as desired.

For \ref{it-poSplitWF}~$\Imp$~\ref{it-ChainSplit}, let $(L, <_L)$ be a linear order with no suborder of type $\zeta$.  Then $L$ has no infinite antichain, so by item~\ref{it-poSplitWF} there is a set $W$ containing exactly the $p \in P$ for which $p\da$ is well-founded.  Clearly $W$ is downward-closed.  Let $R = L \setminus W$.  Then $L = W \cup R$ and $W <_L R$.  We show that $R$ is reverse well-founded.  First, observe that $R$ has no least element.  If $r$ were the least element of $R$, then $r\da$ would be well-founded because every $p <_L r$ would be in $W$ and $W$ is well-founded.  This would imply that $r \in W$, which is a contradiction.  Thus either $R = \emptyset$, in which case $R$ is reverse well-founded, or $R$ is non-empty and has no minimum element, in which case we can define an infinite descending sequence $\la d_n : n \in \Nb \ra$ that is coinitial in $R$.  If $R$ also has an infinite ascending sequence $B = \la b_n : n \in \Nb \ra$, then $b_0 >_L d_{n_0}$ for some $n_0$, in which case $\{d_n : n > n_0\} \cup B$ is a contradictory suborder of $L$ of type $\zeta$.  Thus $R$ is reverse well-founded.

For \ref{it-ChainSplit}~$\Imp$~\ref{it-ACACS}, let $f \colon \Nb \to \Nb$ be an injection.  We show that the true numbers for $f$ form a set.  This implies that the range of $f$ exists as a set, which implies $\aca$ by Lemma~\ref{lem-ACAinjection}.

If $f$ has only finitely many false numbers, then the set of all false numbers exists by bounded $\Sigma^0_1$ comprehension, in which case the set of true numbers also exists.

Suppose instead that $f$ has infinitely many false numbers.  Let $(L, <_L)$ be the linear order defined as in Construction~\ref{const-TFstages} for $f$.  Recall that in this case $L = \{\ell_n : n \in \Nb\}$ is a linear order of type $\omega + \omega^*$, where, for each $n$, $\ell_n$ is in the $\omega$-part if $n$ is false and $\ell_n$ is in the $\omega^*$-part if $n$ is true.  We define a new linear order $(S, <_S)$ from $L$ by replacing each element in the $\omega^*$-part of $L$ by an infinite descending sequence and by replacing each element in the $\omega$-part of $L$ by a finite descending sequence.  This way, $\rca$ suffices to verify that elements in the $\omega^*$-part of $L$ give rise to elements in the ill-founded part of $S$ and that elements in the $\omega$-part of $L$ give rise to elements in the well-founded part of $S$.  To do this, let $S = \{s_{n,m} : \text{$n, m \in \Nb$ and $n$ is true at stage $m$}\}$ (note that if $m \leq n$, then $n$ is true at stage $m$), and define
\begin{align*}
s_{n_0, m_0} <_S s_{n_1, m_1} \quad\Biimp\quad (\ell_{n_0} <_L \ell_{n_1}) \;\orr\; (\ell_{n_0} = \ell_{n_1} \;\andd\; m_0 >_{\Nb} m_1).
\end{align*}
Observe that if $n_0$ is false and $n_1$ is true, then $\ell_{n_0} <_L \ell_{n_1}$, so $s_{n_0, m_0} <_S s_{n_1, m_1}$ for every $m_0$ and $m_1$.  Thus no infinite ascending sequence in $S$ can contain an element $s_{n,m}$ where $n$ is true, and no infinite descending sequence in $S$ can contain an element $s_{n,m}$ where $n$ is false.  It follows that $S$ cannot contain a suborder of type $\zeta$ because such a suborder would have to contain some element $s_{n,m}$, and $s_{n,m}$ is either in no infinite ascending sequence or in no infinite descending sequence.  We may therefore apply item~\ref{it-ChainSplit} to $S$ and obtain a partition $S = W \cup R$ where $W <_L R$, $W$ is well-founded, and $R$ is reverse well-founded.  We claim that $s_{n, 0} \in R$ if and only if $n$ is true.  If $n$ is true, then $s_{n,m} \in S$ for every $m$, and $s_{n,0} >_S s_{n,1} >_S \cdots$ is an infinite descending sequence in $S$.  Thus $s_{n,0}$ cannot be in $W$ as then $W$ would be ill-founded.  So $s_{n,0} \in R$.  Conversely, if $n$ is false, then, using the assumption that there are infinitely many false numbers, we can define an infinite ascending sequence $\ell_n = \ell_{k_0} <_L \ell_{k_1} <_L \cdots$ in $L$ as follows.  Set $k_0 = n$.  Given $k_i$, search for the first pair $\la k, m \ra$ where $\ell_{k_i} <_L \ell_k$ and $k$ is false at stage $m$, and set $k_{i+1} = k$.  We then have the corresponding infinite ascending sequence $s_{n,0} = s_{k_0, 0} <_S s_{k_1, 0} <_S \cdots$ in $S$.  Thus $s_{n,0}$ cannot be in $R$ as then $R$ would not be reverse well-founded.  Therefore $\{n : s_{n,0} \in R\}$ is the set of true numbers for $f$, which completes the proof.
\end{proof}

By taking complements and/or reversing the partial order, the statement ``$p\da$ is well-founded'' may be replaced by any of ``$p\da$ is ill-founded,'' ``$p\ua$ is reverse well-founded,'' and ``$p\ua$ is reverse ill-founded'' in Lemma~\ref{lem-ChainSplitting} item~\ref{it-poSplitWF} and the lemma remains true.

The proof that $\aca$ implies Lemma~\ref{lem-ChainSplitting} item~\ref{it-poSplitWF} is uniform with respect to the partial order.  That is, $\aca$ proves that if $P_0, P_1, P_2, \dots$ is a sequence of partial orders without infinite antichains or suborders of type $\zeta$, then there is a sequence  of sets $W_0, W_1, W_2, \dots$ where, for each $i$, $W_i$ consists of exactly the $p \in P_i$ such that $p\da$ is well-founded.  The proof that $\aca$ implies Lemma~\ref{lem-ChainSplitting} item~\ref{it-ChainSplit} is similarly uniform.  In fact, to conclude Lemma~\ref{lem-ChainSplitting} item~\ref{it-ChainSplit} for a finite sequence of linear orders $L_0, L_1, \dots, L_{n-1}$, as we shall need in Theorem~\ref{thm-RSpoInACA}, one application of Lemma~\ref{lem-ChainSplitting} item~\ref{it-poSplitWF} suffices.  Given linear orders $L_0, L_1, \dots, L_{n-1}$ without suborders of type $\zeta$, let $P$ be the partial order consisting of the disjoint union of the linear orders $L_i$ for $i < n$.  Then $P$ has no infinite antichain and no suborder of type $\zeta$, so by Lemma~\ref{lem-ChainSplitting} item~\ref{it-poSplitWF}, there is a set $W$ consisting of exactly the $p \in P$ such that $p\da$ is well-founded.  Then let $W_i = L_i \cap W$ and $R_i = L_i \setminus W$ for each $i < n$.

If an infinite ascending sequence $A = \la a_n : n \in \Nb \ra$ in a partial order $(P, <_P)$ is not $(0,\infty)$-homogeneous, then there is a $p \in P$ that is comparable with some elements of $P$, but only finitely many.  As $A$ is an ascending sequence, this means that there is an $n_0$ such that $p >_P a_{n_0}$, but $\forall n > n_0 \, (p \mid_P a_n)$.  We think of such a $p$ as a \emph{counterexample} to $A$ being $(0,\infty)$-homogeneous.  Indeed, $p$ is also a counterexample to $\la a_n :  n \geq n_0 \ra$ being $(0,\infty)$-homogeneous.

\begin{Definition}\label{def-tail}
Let $(P, <_P)$ be a partial order, and let $A = \la a_n : n \in \Nb \ra$ be an ascending sequence in $P$.  Then $A_{\geq n_0}$ denotes the ascending sequence $\la a_n : n \geq n_0 \ra$.  Sequences of the form $A_{\geq n_0}$ are called \emph{tails} of $A$, which can be formed in $\rca$.
\end{Definition}

\begin{Definition}\label{def-counterexample}
Let $(P, <_P)$ be a partial order, and let $A = \la a_n : n \in \Nb \ra$ be an infinite ascending sequence in $P$.  A $p \in P$ is called a \emph{counterexample to $A$} if there is an $n$ such that $p >_P a_n$ and $p \mid_P A_{\geq n+1}$.  An infinite ascending sequence $B = \la b_\ell : \ell \in \Nb \ra$ is called a \emph{counterexample sequence for $A$} if $B$ contains counterexamples to infinitely many tails of $A$:  $\forall m \; \exists n > m \; \exists \ell \; (b_\ell >_P a_n \;\andd\; b_\ell \mid_P A_{\geq n+1})$.
\end{Definition}

Notice that if $B$ is a counterexample sequence for an infinite ascending sequence $A$ in some partial order $(P, <_P)$, then $A \pwb B$, but $B \npwb A$.

Suppose that $A$ is an infinite ascending sequence in a partial order $(P, <_P)$ where no tail of $A$ is $(0,\infty)$-homogeneous.  Then for every $n$, there is a counterexample $p$ to $A_{\geq n}$.  If $P$ has finite width, then we can make a counterexample sequence out of such counterexamples.

\begin{Lemma}\label{lem-CXChain}
The following is provable in $\aca$. Let $(P,<_P)$ be an infinite partial order with $k$-chain decomposition $C_0, \dots, C_{k-1}$.  Let $A = \la a_n : n \in \Nb \ra$ be an infinite ascending sequence in $C_i$ for some $i < k$, and assume that no tail of $A$ is $(0,\infty)$-homogeneous.  Then there is an infinite ascending sequence $B = \la b_n : n \in \Nb \ra$ in $C_j$ for some $j < k$ that is a counterexample sequence for $A$.
\end{Lemma}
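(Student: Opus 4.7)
The plan is to extract a counterexample $p_n$ to $A_{\geq n}$ for each $n$, use the chain decomposition to trap infinitely many of these in a single $C_j$, and then thin the resulting sequence to an ascending one by tracking how far along $A$ each counterexample reaches. For any counterexample $p$ to $A$, the set $\{m : p >_P a_m\}$ is bounded because $p \mid_P A_{\geq m+1}$ for some $m$, so it has a maximum $m(p)$; then $p$ is a counterexample to $A_{\geq n}$ precisely when $m(p) \geq n$. By hypothesis such a $p$ exists for every $n$, so $\aca$ suffices to form a sequence $\la p_n : n \in \Nb \ra$ (for instance, $p_n$ the least counterexample to $A_{\geq n}$) together with the accompanying sequence $\la m(p_n) : n \in \Nb \ra$. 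Applying $\rt^1_k$ to the coloring $n \mapsto j_n$, where $p_n \in C_{j_n}$, produces a $j < k$ such that $N = \{n : p_n \in C_j\}$ is infinite.

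The conceptual heart of the argument, and the step I expect to need the most care, is a monotonicity statement: if $p, q$ are distinct counterexamples lying in the common chain $C_j$ with $m(p) < m(q)$, then $p <_P q$. The proof is short but essential. Since $C_j$ is a chain, otherwise $q <_P p$ inside $C_j$; combining with $q >_P a_{m(q)}$ by transitivity gives $p >_P a_{m(q)}$, contradicting the maximality of $m(p)$, which forces $p \mid_P a_{m(q)}$ because $m(q) > m(p)$. Once I have this observation, the ordering on counterexamples in $C_j$ is completely controlled by the numerical values $m(p_n)$.

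With monotonicity in hand, I build $B$ recursively along $N$: set $n_0 = \min N$ and, given $n_k$, let $n_{k+1}$ be the least element of $N$ strictly greater than $m(p_{n_k})$, available because $N$ is infinite. Since $m(p_{n_{k+1}}) \geq n_{k+1} > m(p_{n_k})$, monotonicity delivers $p_{n_k} <_P p_{n_{k+1}}$. The sequence $B = \la p_{n_k} : k \in \Nb \ra$ then lies entirely in $C_j$, is ascending in $<_P$, and, because each $p_{n_k}$ is a counterexample to $A_{\geq n_k}$ with $n_k$ unbounded, it witnesses counterexamples to infinitely many tails of $A$, so $B$ is a counterexample sequence for $A$ as required.
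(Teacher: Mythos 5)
Your proof is correct and follows essentially the same route as the paper's: take the least counterexample $p_n$ to each tail $A_{\geq n}$, pigeonhole infinitely many into a single chain $C_j$, and show via the same transitivity contradiction that a counterexample reaching farther along $A$ must lie strictly above one reaching less far, then extract an ascending subsequence. Your explicit bookkeeping with $m(p)$ is just a cleaner packaging of the paper's ``$p_n <_P p_m$ for all sufficiently large $m$'' step.
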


\begin{proof}
We assume that no tail of $A$ is $(0,\infty)$-homogeneous, so every tail of $A$ has a counterexample $p$.  For each $n$, let $p_n$ be the $<$-least counterexample to the tail $A_{\geq n}$.  By $\rt^1_{<\infty}$, there are a $j < k$ and an infinite $X \subseteq \Nb$ such that $p_n \in C_j$ for all $n \in X$.  Now, for every $n \in X$, we have that $p_n <_P p_m$ for all sufficiently large $m \in X$.  To see this, let $n \in X$.  As $p_n$ is a counterexample to $A_{\geq n}$, there is an $s \geq n$ such that $p_n >_P a_s$ and $p_n \mid_P A_{\geq s+1}$.  Let $m \in X$ be such that $m > s+1$, and consider $p_m$.  The chain $C_j$ contains both $p_n$ and $p_m$, so $p_n \comp_P p_m$.  As $p_m$ is a counterexample to $A_{\geq m}$, there is a $t \geq m$ such that $p_m >_P a_t$.  Thus we cannot have have $p_m \leq_P p_n$ because this would yield $a_{s+1} <_P a_t <_P p_m \leq_P p_n$, contradicting that $p_n \mid_P a_{s+1}$.  Note here that $s+1 < m \leq t$, so $a_{s+1} <_P a_t$ because $A$ is an ascending sequence.  Thus it must be that $p_n <_P p_m$.  We may then define the desired counterexample sequence $B$ as follows.  Let $n_0$ be the $<$-least element of $X$.  Given $n_\ell$, let $n_{\ell+1}$ be the $<$-least element of $X$ with $n_\ell < n_{\ell+1}$ and $p_{n_\ell} <_P p_{n_{\ell+1}}$.  Finally, take $b_\ell = p_{n_\ell}$ for each $\ell$.
\end{proof}

We are now prepared to give a proof of $\rspo_{<\infty}$ in $\aca$.

\begin{Theorem}\label{thm-RSpoInACA}
$\aca \vdash \rspo_{<\infty}$.
\end{Theorem}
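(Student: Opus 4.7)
The strategy combines Kierstead's effective Dilworth theorem (Theorem~\ref{thm-Kierstead}), the observation that any chain of order type $\zeta$ is automatically $(0,\infty)$-homogeneous, the chain-splitting lemma (Lemma~\ref{lem-ChainSplitting}, which requires $\aca$), and iteration of the counterexample-sequence lemma (Lemma~\ref{lem-CXChain}). The plan is: given $P$ of width $\leq k$, apply Theorem~\ref{thm-Kierstead} to decompose $P$ into chains $C_0, \ldots, C_{5^k - 1}$, and then use arithmetical comprehension to check whether some $C_i$ contains a subchain of order type $\zeta$. If so, that subchain is the required $(0,\infty)$-homogeneous chain and we are finished, so we may assume no $C_i$ contains a $\zeta$-subchain.

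Under this assumption, Lemma~\ref{lem-ChainSplitting} decomposes each $C_i$ as $W_i \cup R_i$ with $W_i$ well-founded and $R_i$ reverse well-founded. By pigeonhole some $W_i$ or $R_i$ is infinite, and treating the two cases symmetrically (via reversing the partial order), I may assume some $W_{i_0}$ is infinite, so $W_{i_0}$ is an infinite well-order and yields an infinite ascending sequence $A^0$. I then iterate using Lemma~\ref{lem-CXChain}: at each stage $n$ either some tail of the current ascending sequence $A^n \subseteq C_{i_n}$ is $(0,\infty)$-homogeneous (in which case we are done), or we obtain a counterexample sequence $A^{n+1} \subseteq C_{i_{n+1}}$. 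Since members of $A^{n+1}$ are incomparable with a tail of $A^n$ while every member of $C_{i_n}$ is comparable with every member of $A^n \subseteq C_{i_n}$, we necessarily have $i_{n+1} \neq i_n$.

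The main obstacle is demonstrating termination of this iteration. After enough iterations the pigeonhole principle forces two sequences $A^r$ and $A^s$, with $r < s$, to live in the same chain $C_i$; transitivity of $\pwb$ together with the failure $A^{n+1} \npwb A^n$ at each step then yields $A^r \pwb A^s$ but $A^s \npwb A^r$, so in the chain $C_i$ some element of $A^s$ lies strictly above all of $A^r$. The substantive combinatorial task is to combine this information with the splitting $C_i = W_i \cup R_i$ (noting that both $A^r$ and $A^s$ must lie cofinitely in $W_i$, since $R_i$ contains no infinite ascending sequence) and with the structure of the intervening counterexample sequences $A^{r+1}, \ldots, A^{s-1}$ (which live in chains other than $C_i$) to extract a $\zeta$-subchain inside $C_i$, contradicting the standing assumption. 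This forces termination at some finite stage and thereby produces a $(0,\infty)$-homogeneous chain in $P$.
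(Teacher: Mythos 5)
Your setup (Kierstead's decomposition, disposing of the case where some $C_i$ has a $\zeta$-suborder, splitting each $C_i$ into $W_i \cup R_i$ via Lemma~\ref{lem-ChainSplitting}, and generating counterexample sequences via Lemma~\ref{lem-CXChain}) matches the paper's. The gap is exactly at the point you flag as ``the substantive combinatorial task'': the data you arrive at --- two ascending sequences with $A^r \pwb A^s$ and $A^s \npwb A^r$, both lying (cofinitely) in the well-founded part $W_i$ of the same chain $C_i$ --- is not contradictory and does not yield a $\zeta$-suborder. For instance, $W_i$ could have order type $\omega + \omega$ with $A^r$ contained in the first copy of $\omega$ and $A^s$ in the second; everything in sight is ascending, so no infinite descending sequence (hence no $\zeta$-chain) can be extracted from $A^r$, $A^s$, or the intervening counterexample sequences, which live in other chains and are also ascending. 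So the iteration as you have set it up need not terminate in a contradiction.

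The paper closes this gap by a cofinality trick that your sequential iteration omits. It does not take the counterexample sequences at face value as the next ascending sequences; instead it first fixes, for each infinite $W_i$, an ascending sequence $A_i$ that is \emph{cofinal} in $\wh{W}_i$ (the set of elements of $W_i$ with infinitely many successors in $W_i$; the difference $W_i \setminus \wh{W}_i$ is finite by well-foundedness). It then takes one counterexample sequence $B_i$ for each $A_i$, observes that $B_i$, being ascending, must land in $\wh{W}_{h(i)}$ for some $h(i)$, and concludes $B_i \pwb A_{h(i)}$ precisely because $A_{h(i)}$ is cofinal there. Combining this with $A_{h(i)} \pwb B_{h(i)}$ and iterating $h$ until it cycles gives $B_j \pwb A_j$ for some $j$, contradicting that $B_j$ is a counterexample sequence for $A_j$. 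In your scheme the analogous step fails because $A^r$ need not be cofinal in $W_i$, so nothing prevents $A^s$ from sitting strictly above it. You can repair the argument by replacing each $A^{n+1}$ with a cofinal ascending sequence in the $\wh{W}_{i_{n+1}}$ containing it before extracting the next counterexample sequence, which essentially reproduces the paper's proof.
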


\begin{proof}
It suffices to show that $\aca \vdash \rspocd_{<\infty}$ because $\rspo_{<\infty}$ and $\rspocd_{<\infty}$ are equivalent over $\rca$ as explained in Section~\ref{sec-Dilworth}.  In fact, here we may use that $\wkl \vdash \forall k \, (\rspo_k \biimp \rspocd_k)$, which is simpler than appealing to Theorem~\ref{thm-Kierstead}.

Let $(P, <_P)$ be an infinite partial order with $k$-chain decomposition $C_0, \dots, C_{k-1}$ for some $k$.  Assume for a contradiction that $P$ does not contain a $(0,\infty)$-homogeneous chain.  Then $P$ contains no chain of order-type $\zeta$ because such a chain is automatically $(0,\infty)$-homogeneous.  In particular, no $C_i$ for $i < k$ contains a chain of order-type $\zeta$, so each $C_i$ may be viewed as a linear order with no suborder of type $\zeta$.  As discussed above, the \ref{it-ACACS}~$\Imp$~\ref{it-ChainSplit} direction of Lemma~\ref{lem-ChainSplitting} generalizes to simultaneously handle any sequence of linear orders without suborders of type $\zeta$.  Apply this to the chains $C_0, \dots, C_{k-1}$ to partition $C_i$ for each $i < k$ into $C_i = W_i \cup R_i$, where $W_i <_P R_i$, $W_i$ is well-founded, and $R_i$ is reverse well-founded.

By $\rt^1_{<\infty}$, either $W_i$ is infinite for some $i < k$ or $R_i$ is infinite for some $i < k$.  Without loss of generality, we may assume that some $W_i$ is infinite because otherwise we could work with the reversed partial order $(P, >_P)$ instead.  Relabel the chains $C_0, \dots, C_{k-1}$ so that the infinite chains $W_i$ are exactly $W_0, \dots, W_{u-1}$ for some $u$ with $0 < u \leq k$.  For each $i < u$, let $\wh{W}_i = \{p \in W_i : \forall n \; \exists q > n \; (q >_P p \;\andd\; q \in W_i)\}$ be the elements of $W_i$ with infinitely many successors in $W_i$.  The set $W_i \setminus \wh{W}_i$ is finite for each $i < u$ because if $W_i \setminus \wh{W}_i$ were infinite, then it would be a chain in $W_i$ of order-type $\omega^*$, which contradicts that $W_i$ is well-founded.  It follows that $\wh{W}_i$ is infinite, well-founded, and has no maximum element for each $i < u$.  Therefore, for each $i < u$ we can define an infinite ascending sequence $A_i$ in $\wh{W}_i$ that is cofinal in $\wh{W}_i$.

We assume that $P$ does not contain a $(0,\infty)$-homogeneous chain, so no tail of $A_i$ is $(0,\infty)$-homogeneous for any $i < u$.  The argument of Lemma~\ref{lem-CXChain} generalizes to simultaneously handle any sequence of infinite ascending sequences.  Apply this to the sequences $A_0, \dots, A_{u-1}$ to obtain infinite ascending sequences $B_0, \dots, B_{u-1}$ and a function $h \colon u \imp k$ such that for each $i < u$, $B_i$ is a counterexample sequence to $A_i$ that is contained in chain $C_{h(i)}$.  Notice that $B_i \cap R_{h(i)} = \emptyset$ for each $i < u$ because $B_i$ is an ascending sequence and $R_{h(i)}$ is reverse well-founded.  Thus $B_i \subseteq W_{h(i)}$, which means that $W_{h(i)}$ is infinite and therefore that $h(i) < u$.  Thus $h$ is in fact a function $h \colon u \imp u$.  Furthermore, $B_i \subseteq \wh{W}_{h(i)}$ because every element of $B_i$ has infinitely many successors in $W_{h(i)}$.

Now observe that $B_i \pwb A_{h(i)}$ for each $i < u$ because $B_i \subseteq \wh{W}_{h(i)}$ and $A_{h(i)}$ is cofinal in $\wh{W}_{h(i)}$.  Additionally, $A_{h(i)} \pwb B_{h(i)}$ for each $i < u$ because $B_{h(i)}$ is a counterexample sequence for $A_{h(i)}$.  Therefore $B_i \pwb A_{h(i)} \pwb B_{h(i)}$, so $B_i \pwb B_{h(i)}$ for each $i < u$ by transitivity.  Let $h^n$ denote the $n$\textsuperscript{th} iterate of $h$, where $h^0(i) = i$ and $h^{n+1}(i) = h(h^n(i))$ for each $i < u$.  By induction, we obtain that $B_{h^m(i)} \pwb B_{h^n(i)}$ for all $i < u$ whenever $m \leq n$.  By the pigeonhole principle, there are $m < n \leq u$ with $h^m(0) = h^n(0)$.  We then have that
\begin{align*}
B_{h^{m+1}(0)} \pwb B_{h^n(0)} = B_{h^m(0)} \pwb A_{h^{m+1}(0)}
\end{align*}
and therefore that $B_{h^{m+1}(0)} \pwb A_{h^{m+1}(0)}$ by transitivity.  This contradicts that $B_{h^{m+1}(0)}$ is a counterexample sequence for $A_{h^{m+1}(0)}$, which completes the proof.
\end{proof}

If we want a $(0,\cof)$-homogeneous chain rather than a $(0,\infty)$-homogeneous chain in a given partial order $(P, <_P)$ of finite width, then we may no longer assume that $P$ contains no suborder of type $\zeta$, and we may no longer apply Lemma~\ref{lem-ChainSplitting} item~\ref{it-ChainSplit} to partition each chain $C_i$ into well-founded and reverse well-founded parts.  Instead, we may directly define the reverse ill-founded part of each $C_i$ and then proceed as before.  This pushes the complexity up to $\pica$ because defining the set of reverse ill-founded elements in a linear order requires $\pica$ in general, as shown in Theorem~\ref{thm-MaxLess}.

\begin{Theorem}\label{thm-cofRSpoInPica}
$\pica \vdash \cofrspo_{<\infty}$.
\end{Theorem}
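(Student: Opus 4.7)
The plan is to follow the skeleton of Theorem~\ref{thm-RSpoInACA} almost verbatim, but to replace the appeal to Lemma~\ref{lem-ChainSplitting}---which required the absence of chains of type $\zeta$---by a direct use of $\pica$ to isolate the reverse ill-founded part of each chain in a Kierstead decomposition. It will again suffice to prove $\cofrspocd_{<\infty}$: given an infinite partial order $(P,<_P)$ with a finite chain decomposition $C_0,\dots,C_{k-1}$, I would assume for contradiction that $P$ contains no $(0,\cof)$-homogeneous chain and then derive a contradiction as sketched below.

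The $\pica$ step is to apply $\Pi^1_1$-comprehension, for each $i<k$, to form the set
\begin{equation*}
\wh{C}_i \;=\; \bigl\{\, p\in C_i : \text{$C_i\cap p\ua$ contains an infinite ascending sequence}\,\bigr\},
\end{equation*}
whose defining predicate is $\Sigma^1_1$. Two easy facts in $\rca$ then suffice: any element of an ascending sequence witnessing $p\in\wh{C}_i$ is itself in $\wh{C}_i$, so $\wh{C}_i$ has no greatest element and every point of $\wh{C}_i$ has infinitely many $\wh{C}_i$-successors; and if $\wh{C}_i$ is infinite, then one can in $\rca$ define an infinite ascending sequence $A_i$ in $\wh{C}_i$ that is cofinal in $\wh{C}_i$. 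I would then argue that at least one $\wh{C}_i$ is infinite: some $C_i$ is infinite by pigeonhole on the chain decomposition, and applying $\ads$ (available because $\pica\vdash\aca\vdash\ads$) to that $C_i$ yields an infinite ascending or descending sequence; in the descending case I would first replace $P$ by $(P,>_P)$, which preserves the chain decomposition and the contradiction hypothesis.

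After relabeling so that $\wh{C}_0,\dots,\wh{C}_{u-1}$ are precisely the infinite ones and fixing cofinal ascending sequences $A_i$ in $\wh{C}_i$ for $i<u$, the remainder of the argument parallels Theorem~\ref{thm-RSpoInACA}. Since an infinite ascending sequence that is $(0,\infty)$-homogeneous is automatically $(0,\cof)$-homogeneous, no tail of any $A_i$ is $(0,\infty)$-homogeneous, so the simultaneous form of Lemma~\ref{lem-CXChain} produces counterexample ascending sequences $B_i \subseteq C_{h(i)}$ for some $h\colon u\to k$. Each $B_i$ is an infinite ascending sequence in $C_{h(i)}$, so by definition $B_i \subseteq \wh{C}_{h(i)}$, whence $h$ actually maps $u$ into $u$. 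Cofinality of $A_{h(i)}$ in $\wh{C}_{h(i)}$ gives $B_i\pwb A_{h(i)}$; the counterexample condition gives $A_{h(i)}\pwb B_{h(i)}$; and transitivity gives $B_i\pwb B_{h(i)}$. Iterating $h$ and invoking the finite pigeonhole principle produce $m<n\le u$ with $h^m(0)=h^n(0)$, and the final contradiction $B_{h^{m+1}(0)}\pwb A_{h^{m+1}(0)}$ follows exactly as in Theorem~\ref{thm-RSpoInACA}.

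The main obstacle, and the point at which the axiomatic strength jumps from $\aca$ up to $\pica$, will be the very first step: the definition of $\wh{C}_i$. Asking whether a point of a linear order has an infinite ascending sequence above it is a genuinely $\Sigma^1_1$ question and, as Theorem~\ref{thm-MaxLess} will indicate, no arithmetical system can uniformly carry out such a separation. Everything else---the simultaneous version of Lemma~\ref{lem-CXChain}, the choice of cofinal ascending sequences in $\wh{C}_i$, and the combinatorial chasing of $h$---is a straightforward effectivization already available in $\aca$.
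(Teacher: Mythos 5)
Your proposal is correct and follows essentially the same route as the paper: the paper also reduces to $\cofrspocd_{<\infty}$, uses $\pica$ to form exactly the sets you call $\wh{C}_i$ (there denoted $W_i = \{p \in C_i : p\ua \cap C_i \text{ is reverse ill-founded}\}$), and then reruns the counterexample-sequence/iteration-of-$h$ argument from Theorem~\ref{thm-RSpoInACA}, noting that only the absence of a maximum in each $W_i$ (not well-foundedness) is needed. The only cosmetic difference is that the paper phrases the contradiction hypothesis as ``no $(0,\infty)$-homogeneous chain of order-type $\omega$'' rather than ``no $(0,\cof)$-homogeneous chain,'' which your observation about ascending $(0,\infty)$-homogeneous sequences being automatically $(0,\cof)$-homogeneous bridges correctly.
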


\begin{proof}
It suffices to show that $\pica \vdash \cofrspocd_{<\infty}$ because $\cofrspo_{<\infty}$ and $\cofrspocd_{<\infty}$ are equivalent as explained in Section~\ref{sec-Dilworth}.

Let $(P, <_P)$ be an infinite partial order with $k$-chain decomposition $C_0, \dots, C_{k-1}$ for some $k$.  Apply $\rt^1_{<\infty}$ to conclude that $C_i$ is infinite for some $i < k$, and apply $\ads$ to $C_i$ to conclude that $C_i$ contains either an infinite ascending sequence or an infinite descending sequence.  By reversing the partial order if necessary, we may assume that $C_i$ contains an infinite ascending sequence.  We show that $P$ contains a $(0,\infty)$-homogeneous chain of order-type $\omega$, which is necessarily $(0,\cof)$-homogeneous as discussed following Definition~\ref{def-ChainHom}.  Thus assume for a contradiction that $P$ does not contain a $(0,\infty)$-homogeneous chain of order-type $\omega$.

Use $\pica$ (and the fact that the $\Sigma^1_1$ sets are the complements of the $\Pi^1_1$ sets) to simultaneously define for each $i < k$ the subset $W_i = \{p \in C_i : \text{$p\ua \cap C_i$ is reverse ill-founded}\}$ of $C_i$ consisting of the elements of $C_i$ that have infinite ascending sequences above them in $C_i$.  At least one $W_i$ is non-empty because at least one $C_i$ contains an infinite ascending sequence.  Now proceed as in the proof of Theorem~\ref{thm-RSpoInACA}, except notice that now the sets $W_i$ are not necessarily well-founded.  What matters is that $W_i$ has no maximum element and that $C_i \setminus W_i$ is reverse well-founded for each $i < k$.  Relabel the chains $C_0, \dots, C_{k-1}$ so that the non-empty $W_i$ are exactly $W_0, \dots, W_{u-1}$ for some $u$ with $0 < u \leq k$.  Each $W_i$ for $i < u$ is non-empty and has no maximum element.  Therefore, for each $i < u$ we can define an infinite ascending sequence $A_i$ in $W_i$ that is cofinal in $W_i$.  No tail of $A_i$ is $(0,\infty)$-homogeneous for any $i < u$ by the assumption that $P$ does not contain a $(0,\infty)$-homogeneous chain of order-type $\omega$.  As in the proof of Theorem~\ref{thm-RSpoInACA}, apply Lemma~\ref{lem-CXChain} to the sequences $A_0, \dots, A_{u-1}$ to obtain infinite ascending sequences $B_0, \dots, B_{u-1}$ and a function $h \colon u \imp k$ such that for each $i < u$, $B_i$ is a counterexample sequence to $A_i$ that is contained in chain $C_{h(i)}$.  The sequence $B_i$ is contained in $W_{h(i)}$ for each $i < u$ by the definition of $W_{h(i)}$.  Thus $W_{h(i)}$ is infinite for each $i < u$, so $h(i) < u$ for each $i < u$.  Therefore $h$ is a function $h \colon u \imp u$.  As in the proof of Theorem~\ref{thm-RSpoInACA}, there are $m < n \leq u$ with $h^m(0) = h^n(0)$, which yields that $B_{h^{m+1}(0)} \pwb A_{h^{m+1}(0)}$, which contradicts that $B_{h^{m+1}(0)}$ is a counterexample sequence for $A_{h^{m+1}(0)}$.  Ultimately, we contradicted the assumption that $P$ does not contain a $(0,\infty)$-homogeneous chain of order-type $\omega$.  Thus $P$ does contain a $(0,\infty)$-homogeneous chain of order-type $\omega$, and such a chain is $(0,\cof)$-homogeneous.
\end{proof}

\section{Proofs of \texorpdfstring{$\rspo_{<\infty}$}{RSpo\_(< infinity)}, \texorpdfstring{$\rspo_k$}{RSpo\_k}, and \texorpdfstring{$\cofrspocd_2$}{(0,cof)-RSpoCD\_2} in \texorpdfstring{$\rca + \isig{2} + \ads$}{RCA\_0 + ISigma\_2 + ADS} and below}\label{sec-forward}

The goal of this section is to show the following.
\begin{itemize}
\item $\rca + \isig{2} + \ads \vdash \rspo_{<\infty}$ (Theorem~\ref{thm-ForwardRSpo}).

\medskip

\item $\rca + \ads \vdash \rspo_k$ for each fixed $k$ (Theorem~\ref{thm-ForwardRSpok}).

\medskip

\item $\rca + \sads \vdash \rspocd_2$ (Theorem~\ref{thm-ForwardRSpoCD2}).

\medskip

\item $\rca + \isig{2} + \ads \vdash \cofrspo_2$ (Theorem~\ref{thm-cofRSpo2InADSISig2}).
\end{itemize}

The main tools used in the proof of $\rspo_{<\infty}$ in $\aca$ from Theorem~\ref{thm-RSpoInACA} are the counterexamples and counterexample sequences of Definition~\ref{def-counterexample}.  Let $A = \la a_n : n \in \Nb \ra$ be an infinite ascending sequence in a partial order $(P, <_P)$ such that no tail of $A$ is $(0,\infty)$-homogeneous.  Given a $p \in P$, $p$ being a counterexample to a given tail of $A$ is a $\Pi^0_1$ property, thus when working strictly below $\aca$ we may not necessarily be able to produce a counterexample sequence for $A$ as in Lemma~\ref{lem-CXChain}.  However, given an $a_m$, we can effectively search for an $a_n$ and a $p \in P$ with $p \geq_P a_m$ and $p \mid_P a_n$.  This search procedure can be used to produce \emph{ladders} for $A$ according to the following definition.  These ladders play the role of the counterexample sequences.

\begin{Definition}\label{def-ladder}
Let $(P, <_P)$ be a partial order, and let $S = \la s_n : n \in \Nb \ra$ be an infinite sequence in $P$.  An infinite ascending sequence $B = \la b_n : n \in \Nb \ra$ is called a \emph{ladder for $S$} if $\forall n \, (s_n \leq_P b_n)$.
\end{Definition}

In Definition~\ref{def-ladder}, the sequence $S$ is not required to be ascending, but in practice it usually is.  Notice also that if $A$ is an infinite ascending sequence in a partial order $(P, <_P)$, then $A$ is a ladder for itself.

Let $(P, <_P)$ be a partial order that has been decomposed into $k$ chains as $P = C_0 \cup \cdots \cup C_{k-1}$, and let $\vec{P}$ denote $\vec{P} = P \oplus {<_P} \oplus C_0 \oplus \cdots \oplus C_{k-1}$.  Suppose that $A$ is an infinite ascending sequence that is contained in some chain $C_j$.  To produce a $(0,\infty)$-homogeneous chain for $P$, we look for ladders for $A$ in the chains $C_i$ with $i \neq j$.  To do this, we define a Turing functional $\findlad^{\vec{P}} \colon P^\Nb \times k \imp P^\Nb$ relative to $\vec{P}$, where $\findlad^{\vec{P}}(A, i)$ attempts to compute a ladder for $A$ in $C_i$.  We then use $\isig{2}$ in the form of bounded $\Pi^0_2$ comprehension to determine the $i < k$ with $i \neq j$ for which $\findlad^{\vec{P}}(A, i)$ is total.  If $\findlad^{\vec{P}}(A, i)$ is total, we then want to find ladders for it in the chains $C_\ell$ with $\ell \neq i$ and continue this process either until finding enough ladders to knit together into a $(0,\infty)$-homogeneous chain or until realizing that there are so few ladders that a tail of one of them must already be $(0,\infty)$-homogeneous.  In fact, we consider all the iterations of $\findlad^{\vec{P}}$ that we may eventually need up front and apply bounded $\Pi^0_2$ comprehension only once.

Lemma~\ref{lem-CtrExPts} says that if $(P, <_P)$ is a partial order decomposed into chains $C_0, \dots, C_{k-1}$ and $A$ is an infinite ascending sequence in $P$ with no $(0,\infty)$-homogeneous tail, then it is possible to search for a ladder for $A$ in some $C_i$.

\begin{Lemma}\label{lem-CtrExPts}
The following is provable in $\rca + \bsig{2}$. Let $(P, <_P)$ be an infinite partial order with $k$-chain decomposition $C_0, \dots, C_{k-1}$, and let $A = \la a_n : n \in \Nb \ra$ be an infinite ascending sequence in $P$.  If no tail of $A$ is $(0,\infty)$-homogeneous, then there is an $i < k$ such that
\begin{align*}
\forall m \; \exists n \; \exists p \in C_i \; (p \geq_P a_m \;\andd\; p \mid_P a_n).
\end{align*}
\end{Lemma}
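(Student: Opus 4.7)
The plan is to argue by contrapositive: assuming the conclusion fails for every $i < k$, derive a contradiction with the hypothesis that no tail of $A$ is $(0,\infty)$-homogeneous. Set $\varphi_i(m) \equiv \exists n \, \exists p \in C_i \, (p \geq_P a_m \andd p \mid_P a_n)$, so that the conclusion reads $\exists i < k \, \forall m \, \varphi_i(m)$. Its negation asserts that for each $i < k$ there exists some $m$ with $\neg\varphi_i(m)$, i.e., with $\forall p \in C_i \, \forall n \, (p \geq_P a_m \imp p \comp_P a_n)$, which is a $\Pi^0_1$ statement in the parameters $i$ and $m$.

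The key move will be to uniformly bound these witnesses. Applying $\bsig{2}$ (equivalently $\bpi{1}$) to the family indexed by $i < k$ yields a single $M$ such that for every $i < k$ there exists $m < M$ with $\neg\varphi_i(m)$. With $M$ in hand, I then invoke the failure of $(0,\infty)$-homogeneity of the tail $A_{\geq M}$: as recalled in the discussion preceding Definition~\ref{def-counterexample}, this produces an element $p \in P$ together with some $n_0 \geq M$ for which $p >_P a_{n_0}$ and $p \mid_P a_j$ for every $j > n_0$.

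To close the argument, pick $i < k$ with $p \in C_i$ and some $m < M$ witnessing $\neg\varphi_i(m)$. Since $m < M \leq n_0$ and $A$ is ascending, $a_m \leq_P a_{n_0} <_P p$, so $p \geq_P a_m$; then $\neg\varphi_i(m)$ forces $p \comp_P a_j$ for every $j$, contradicting $p \mid_P a_{n_0+1}$.

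The main (and essentially only) obstacle is the bounding step: individual witnesses $m$ exist for each $i$ separately, but collecting them under a single bound $M$ is exactly the content of $\bpi{1}$, which is not provable in $\rca$ alone. This is precisely why $\bsig{2}$ appears in the hypotheses of the lemma.
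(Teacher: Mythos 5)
Your proof is correct. It is essentially the paper's argument in contrapositive form: the paper proceeds directly, noting that every tail $A_{\geq m}$ has a counterexample, defining $f(m)$ to be the index of a chain containing one, and applying $\rt^1_{<\infty}$ to find an $i$ hit infinitely often; you instead assume each $\varphi_i$ fails at some $m$, use the equivalent principle $\bpi{1}$ to bound those failure witnesses by a single $M$, and let a counterexample to $A_{\geq M}$ supply the contradiction. Since $\rt^1_{<\infty}$ and $\bpi{1}$ are both equivalent to $\bsig{2}$ over $\rca$, the two arguments have identical axiomatic content; yours dispenses with defining the search function $f$ and verifying its totality, at the cost of running the argument through a double negation.
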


\begin{proof}
Suppose that no tail $A_{\geq m}$ of $A$ is $(0,\infty)$-homogeneous.  Then for every $m$, there is a counterexample $p$ to the tail $A_{\geq m}$.  This implies that
\begin{align*}
\forall m \; \exists n \; \exists p \in P \; (p \geq_P a_m \;\andd\; p \mid_P a_n).
\end{align*}
Thus we may define a function $f \colon \Nb \imp k$ as follows.  Given $m$, search for an $n$ and a $p$ with $p \geq_P a_m$ and $p \mid_P a_n$, find the $i < k$ with $p \in C_i$, and output $f(m) = i$.  By $\rt^1_{<\infty}$, there is an $i < k$ such that $f(m) = i$ for infinitely many $m$.  This $i$ satisfies the conclusion of the lemma.
\end{proof}

Again let $(P, <_P)$ be a partial order with $k$-chain decomposition $C_0, \dots, C_{k-1}$.  Lemma~\ref{lem-LadderFinder} shows how to search for a ladder for a given infinite sequence $A$ within a target chain $C_i$.  It may be thought of as a weaker, yet effective, version of Lemma~\ref{lem-CXChain}.

\begin{Lemma}\label{lem-LadderFinder}
The following is provable in $\rca$.  Let $(P, <_P)$ be a partial order with $k$-chain decomposition $C_0, \dots, C_{k-1}$. Let $\vec{P}$ denote $\vec{P} = P \oplus {<_P} \oplus C_0 \oplus \cdots \oplus C_{k-1}$.  Then there is a Turing functional $\findlad^{\vec{P}} \colon P^\Nb \times k \imp P^\Nb$ relative to $\vec{P}$ with the following properties for every infinite sequence $A = \la a_n : n \in \Nb \ra$ in $P$ and every $i < k$.
\begin{enumerate}[(1)]
\item\label{it-IsTot} If
\begin{align*}
\forall m \; \exists n \; \exists p \in C_i \; \bigl(p \geq_P a_m \;\andd\; p \mid_P a_n\bigr),
\end{align*}
then $\findlad^{\vec{P}}(A, i)$ is total.

\medskip

\item\label{it-TotLadder} If $\findlad^{\vec{P}}(A, i)$ is total, then it computes a ladder for $A$ in $C_i$.
\end{enumerate}
\end{Lemma}

\begin{proof}
Let $A = \la a_n : n \in \Nb \ra$ be an infinite sequence in $P$, and let $i < k$.  Compute $\findlad^{\vec{P}}(A, i)(0)$ by searching for an $n$ and a $p_0 \in C_i$ with $p_0 \geq_P a_0$ and $p_0 \mid_P a_n$.  If $p_0$ is found, then output $\findlad^{\vec{P}}(A, i)(0) = p_0$.  Compute $\findlad^{\vec{P}}(A, i)(m+1)$ by first computing $p_m = \findlad^{\vec{P}}(A, i)(m)$.  Then search for an $n$ and a $p_{m+1} \in C_i$ with $p_{m+1} >_P p_m$, $p_{m+1} \geq_P a_{m+1}$, and $p_{m+1} \mid_P a_n$.  If $p_{m+1}$ is found, then output $\findlad^{\vec{P}}(A, i)(m+1) = p_{m+1}$.

Item~\ref{it-TotLadder} follows immediately from the definition of $\findlad^{\vec{P}}$.  If $A = \la a_n : n \in \Nb \ra$ is an infinite sequence in $P$, if $i < k$, and if $\findlad^{\vec{P}}(A, i)$ is total, then it must be that $\findlad^{\vec{P}}(A, i)(m) \in C_i$, that $a_m \leq_P \findlad^{\vec{P}}(A, i)(m)$, and that $\findlad^{\vec{P}}(A, i)(m) <_P \findlad^{\vec{P}}(A, i)(m+1)$ for every $m$.

For item~\ref{it-IsTot}, let $A = \la a_n : n \in \Nb \ra$ be an infinite sequence in $P$, let $i < k$, and suppose that for every $m$ there are an $n$ and a $p \in C_i$ with $p \geq_P a_m$ and $p \mid_P a_n$.  We use $\isig{1}$ to show that $\findlad^{\vec{P}}(A, i)$ is total.  By assumption, there are an $n$ and a $p_0 \in C_i$ with $p_0 \geq_P a_0$ and $p_0 \mid_P a_n$.  Thus $\findlad^{\vec{P}}(A, i)(0)$ is defined.  Inductively assume that $p_m = \findlad^{\vec{P}}(A, i)(m)$ is defined.  Then there is an $\ell$ such that $p_m \mid_P a_\ell$.  By assumption, there are an $s$ and a $p \in C_i$ with
\begin{align*}
p \geq_P a_{m+1} \quad\text{and}\quad p \mid_P a_s
\end{align*}
and also a $t$ and a $q \in C_i$ with
\begin{align*}
q \geq_P a_\ell \quad\text{and}\quad q \mid_P a_t.
\end{align*}
The elements $p$ and $q$ are both in the chain $C_i$, so they are comparable.  By taking $p_{m+1} = \max_{<_P}\{p, q\}$ and $n$ as either $s$ or $t$ as appropriate, we obtain an $n$ and a $p_{m+1} \in C_i$ with
\begin{align*}
p_{m+1} \geq_P a_{m+1} \quad\text{and}\quad p_{m+1} \geq_P a_\ell \quad\text{and}\quad p_{m+1} \mid_P a_n.
\end{align*}
Again, $p_{m+1}$ and $p_m$ are both in the chain $C_i$, so they are comparable.  However, we cannot have that $p_{m+1} \leq_P p_m$ because this would imply that $a_\ell \leq_P p_{m+1} \leq_P p_m$, which contradicts that $p_m \mid_P a_\ell$.  Therefore $p_m <_P p_{m+1}$.  Thus there are an $n$ and a $p_{m+1} \in C_i$ with $p_{m+1} >_P p_m$, $p_{m+1} \geq_P a_{m+1}$, and $p_{m+1} \mid_P a_n$.  So $\findlad^{\vec{P}}(A, i)(m+1)$ is defined.
\end{proof}

The following lemma concerning finite labeled trees helps organize the proof of $\rspo_{<\infty}$ in $\rca + \isig{2} + \ads$.  Recall that for a finite rooted tree $T$, the \emph{height} of a vertex is the length of the path from the vertex to the root, the \emph{height} of the tree is the maximum height of a vertex in $T$, and \emph{level $k$} of $T$ consists of all the vertices of $T$ that have height $k$.  (We warn the reader that the height of a tree as defined here is one less than the height of the tree according to Definition~\ref{def-HandW} when considering the tree as a partial order.  For example, the one-element tree consisting of only the root has height $0$ as a tree but height $1$ as a partial order.)  For a rooted tree $T$, let $\preceq$ denote the associated \emph{tree-order} on $T$, where $\sigma \preceq \tau$ if $\sigma$ is on the (unique) path from the root to $\tau$.  Let $\prec$ denote the strict version of $\preceq$.

\begin{Lemma}\label{lem-TreeHelper}
The following is provable in $\rca$.
Let $k \geq 2$, and let $T$ be a finite rooted tree with the following properties.
\begin{itemize}
\item $T$ has height $k$.

\medskip

\item Every leaf of $T$ is at level $k$.

\medskip

\item The vertices of $T$ are labeled by a function $\ell \colon T \imp k$ in such a way that if $\sigma \in T$ is not a leaf and $\tau_0, \dots, \tau_{n-1}$ are the children of $\sigma$, then $\ell(\tau_0), \dots, \ell(\tau_{n-1})$ are distinct elements of $k \setminus \{\ell(\sigma)\}$.
\end{itemize}
Then there is a $\sigma \in T$ that is not a leaf such that for every child $\tau$ of $\sigma$ there is an $\eta \succ \tau$ with $\ell(\eta) = \ell(\sigma)$.
\end{Lemma}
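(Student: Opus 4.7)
The plan is to argue by contradiction, using the hypothesis to trace out a root-to-leaf path of length $k$ along which the vertex labels are pairwise distinct, and then to derive a contradiction from the pigeonhole principle since there are only $k$ possible labels.

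Specifically, suppose for contradiction that no such $\sigma$ exists. Then for every non-leaf $\sigma \in T$ there is a child $\tau$ of $\sigma$ with the property that no $\eta \succ \tau$ satisfies $\ell(\eta) = \ell(\sigma)$; call such a $\tau$ a \emph{bad child} of $\sigma$. First I would construct a sequence $\tau_0, \tau_1, \dots, \tau_k$ by finite recursion. Let $\tau_0$ be the root of $T$. Given $\tau_i$ at level $i < k$, note that $\tau_i$ is not a leaf, since every leaf of $T$ lies at level $k$; hence $\tau_i$ admits a bad child by assumption, and I take $\tau_{i+1}$ to be the $<$-least such child. Set $a_i = \ell(\tau_i)$ for each $i \leq k$.

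Next I would verify by bounded induction on $i \leq k$ that the labels $a_0, a_1, \dots, a_i$ are pairwise distinct. The crucial observation is that the bad-child condition at $\tau_j$ forbids the label $a_{j-1}$ not just immediately below $\tau_j$ but throughout the entire subtree rooted at $\tau_j$. In particular, fixing $i \leq k$ and $j < i$: if $j = i - 1$, then $a_i \neq a_{i-1}$ directly from the labeling hypothesis that each child's label differs from its parent's; if $j \leq i - 2$, then $\tau_i$ is a strict descendant of $\tau_{j+1}$, and the bad-child condition at $\tau_{j+1}$ forces $\ell(\tau_i) = a_i \neq a_j$.

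Applying this to $i = k$ produces $k + 1$ pairwise distinct labels $a_0, a_1, \dots, a_k$ in the set $\{0, 1, \dots, k-1\}$ of size $k$, which is the desired contradiction. I do not anticipate a serious obstacle in formalizing this argument in $\rca$: the sequence $\tau_0, \dots, \tau_k$ is built by primitive recursion of fixed length $k$, and the distinctness claim is a bounded property of a finite sequence, so $\isig{1}$ more than suffices to carry out the induction.
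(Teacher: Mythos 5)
Your proof is correct. It is essentially the paper's argument unwound: the paper proceeds by $\Pi^0_1$ induction on $k$, observing that if the root $r$ is not the desired $\sigma$ then some child heads a complete subtree of height $k-1$ all of whose labels avoid $\ell(r)$, and then relabelling that subtree via a bijection onto a smaller label set so as to apply the induction hypothesis; iterating that descent produces exactly your chain of bad children $\tau_0 \prec \tau_1 \prec \cdots \prec \tau_k$, and the accumulated forbidden labels are exactly your distinctness claim. Your key step is sound: the bad-child condition at $\tau_{j+1}$ forbids the label $a_j$ on every proper descendant of $\tau_{j+1}$, hence at every $\tau_i$ with $i \geq j+2$, while the labelling hypothesis directly gives $a_i \neq a_{i-1}$; the pigeonhole on $k+1$ pairwise distinct labels in $\{0, \dots, k-1\}$ then closes the argument. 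What your packaging buys is ease of formalization: since $T$ is finite, the bad-child predicate is a bounded condition, the path is built by a recursion of fixed length $k$, and distinctness is a $\Sigma^0_0$ property of a finite sequence, so no induction on $k$ and no relabelling bijection are needed and $\rca$ comfortably suffices. The paper's inductive formulation is marginally more modular but carries the extra bookkeeping of the bijection and the composite labelling.
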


\begin{proof}
Proceed by $\Pi^0_1$ induction on $k \geq 2$.  For the base case $k = 2$, the only possibility for $T$ is a path of length $2$ consisting of the root $r$, $r$'s child $\tau$, and $\tau$'s child $\eta$, where $\ell(r) \neq \ell(\tau)$ and $\ell(\tau) \neq \ell(\eta)$.  As $k = 2$, it must be that $\ell(r) = \ell(\eta)$, so we may take $\sigma = r$.

Now suppose that the lemma holds for $k$.  Consider a tree $T$ of height $k+1$ where every leaf is at level $k+1$, and consider also a labeling $\ell \colon T \imp k+1$ that labels $T$ according to the hypothesis of the lemma.  Let $r$ be the root, and let $\tau_0, \dots, \tau_{n-1}$ be $r$'s children.  If for every $\tau_i$ there is an $\eta \succ \tau_i$ with $\ell(\eta) = \ell(r)$, then we may take $\sigma = r$.  Otherwise, there is a $\tau_i$ such that no $\eta \succ \tau_i$ has $\ell(\eta) = \ell(r)$.  Let $S = \{\eta \in T : \eta \succeq \tau_i\}$ be the complete subtree of $T$ above $\tau_i$ rooted at $\tau_i$.  Then $S$ has height $k$, and every leaf of $S$ is at level $k$ of $S$.  Moreover, $S$ is labeled by $\ell$, which only uses labels from the set $(k+1) \setminus \{\ell(r)\}$.  Fix a bijection $f \colon (k+1) \setminus \{\ell(r)\} \imp k$, and define the labeling $\wh{\ell} \colon S \imp k$ by $\wh{\ell} = f \circ \ell$.  By the induction hypothesis applied to $S$ and $\wh{\ell}$, there is a $\sigma \in S$ that is not a leaf such that for every child $\tau$ of $\sigma$ in $S$ there is an $\eta \succ \tau$ in $S$ with $\wh{\ell}(\eta) = \wh{\ell}(\sigma)$.  This $\sigma$ also satisfies the conclusion of the lemma for $T$ because $S$ is the complete subtree of $T$ above $\tau_i$:  for every child $\tau$ of $\sigma$ in $T$, there is an $\eta \succ \tau$ in $T$ with $\wh{\ell}(\eta) = \wh{\ell}(\sigma)$ and hence with $\ell(\eta) = \ell(\sigma)$.
\end{proof}

\begin{Theorem}\label{thm-ForwardRSpo}
$\rca + \isig{2} + \ads \vdash \rspo_{<\infty}$.
\end{Theorem}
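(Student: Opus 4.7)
The plan is to prove the equivalent statement $\rspocd_{<\infty}$, which implies $\rspo_{<\infty}$ over $\rca$ by Theorem~\ref{thm-Kierstead}. Let $(P, <_P)$ be an infinite partial order with $k$-chain decomposition $C_0, \dots, C_{k-1}$, and suppose for contradiction that $P$ contains no $(0, \infty)$-homogeneous chain. From $\isig{2}$ we get $\bsig{2}$, so by Proposition~\ref{prop-CC-CA} some chain $C_{i_0}$ is infinite; applying $\ads$ and reversing the order if necessary, I get an infinite ascending sequence $A_\emptyset$ in some $C_{i_0}$.

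The heart of the argument is to build a finite tree $T \subseteq \{0, \dots, k-1\}^{<\Nb}$ of height $k$ whose nodes $\sigma$ are decorated with indices $e_\sigma$ coding infinite ascending sequences $A_\sigma$, and whose labels $\ell(\sigma)$ record the chain $C_{\ell(\sigma)}$ containing $A_\sigma$. The root is labeled $i_0$ and decorated with an index for $A_\emptyset$. For a non-leaf $\sigma$, I declare $\sigma^\smf j \in T$ whenever $j \in k \setminus \{\ell(\sigma)\}$ and the machine $\Phi_{f(e_\sigma, j)}$ from Lemma~\ref{lem-LadderFinder} is total, in which case $A_{\sigma^\smf j}$ is the resulting ladder for $A_\sigma$ living in $C_j$. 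This gives $T$ the labeling format demanded by Lemma~\ref{lem-TreeHelper}: children of $\sigma$ carry distinct labels, each drawn from $k\setminus\{\ell(\sigma)\}$, and the relevant indices $e_{\sigma^\smf j}$ are computed uniformly via Lemma~\ref{lem-LadderFinder}.

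To carry out this construction within $\rca + \isig{2} + \ads$, I first observe that, under the contradiction hypothesis, no tail $A^\sigma_{\geq m}$ of any constructed $A_\sigma$ is $(0, \infty)$-homogeneous; so by Lemma~\ref{lem-CtrExPts} applied to $A_\sigma$, at least one $j \neq \ell(\sigma)$ witnesses the counterexample property for $A_\sigma$ in $C_j$, and Lemma~\ref{lem-LadderFinder}\,\ref{it-IsTot} then guarantees that $\Phi_{f(e_\sigma, j)}$ is total. Hence the tree really does reach depth $k$ with every leaf at level $k$. The predicate "$\Phi_{f(e_\sigma, j)}$ is total" is $\Pi^0_2$, and the bookkeeping needed to produce $T$ as an actual set (so that Lemma~\ref{lem-TreeHelper} is applicable) is handled by bounded $\Sigma^0_2$ comprehension, which is available thanks to $\isig{2}$. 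Lemma~\ref{lem-TreeHelper} then yields a non-leaf node $\sigma$ such that every child $\tau$ of $\sigma$ has a descendant $\eta \succ \tau$ with $\ell(\eta) = \ell(\sigma)$.

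The final step mirrors the cyclic pigeonhole contradiction of Theorem~\ref{thm-RSpoInACA}. Because each $A_{\sigma^\smf j}$ is a ladder for $A_\sigma$, iterating along the path from $\sigma$ to any $\eta$ with $\ell(\eta)=\ell(\sigma)$ gives $A_\sigma \pwb A_\eta$, with both sequences confined to the single chain $C_{\ell(\sigma)}$. Combining the witnesses provided by the looping structure---one child per possible escape chain, each of which returns to $\ell(\sigma)$---forces, for sufficiently large $m$, every counterexample to the tail $A^\sigma_{\geq m}$ to be dominated by an element of some $A_\eta \subseteq C_{\ell(\sigma)}$, which is impossible since counterexamples to tails of $A_\sigma$ must lie outside $C_{\ell(\sigma)}$. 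The main obstacle I expect is the last step: converting the purely combinatorial output of Lemma~\ref{lem-TreeHelper} into an explicit contradiction using only $\isig{2}$-induction to manage the iterated ladder/counterexample bookkeeping around the cycle, since the ladders produced by Lemma~\ref{lem-LadderFinder} are strictly weaker than the counterexample sequences of Definition~\ref{def-counterexample} used in the $\aca$ proof.
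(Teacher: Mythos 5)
Your setup matches the paper's proof of this theorem almost exactly---the reduction to $\rspocd_{<\infty}$, the labeled tree of ladder-computing machines built from Lemma~\ref{lem-LadderFinder}, the use of bounded $\Pi^0_2$ (equivalently bounded $\Sigma^0_2$) comprehension to form $T$, and the appeal to Lemma~\ref{lem-TreeHelper}---but the endgame has a genuine gap, and it is exactly the step you flag as the ``main obstacle.''  A first, smaller issue: you speak of ``one child per possible escape chain,'' but Lemma~\ref{lem-CtrExPts} only guarantees that \emph{some} $j \neq \ell(\sigma)$ yields a total ladder machine, so $\sigma$ may have children in $T$ labeled by only a proper subset of $k \setminus \{\ell(\sigma)\}$.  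The chains $C_j$ with $j$ not the label of any child of $\sigma$ in $T$ require a separate argument: the paper's Claim~2 uses the \emph{non}-totality of $\Phi_{f(e(\sigma),j)}$, via Lemma~\ref{lem-LadderFinder}, together with $\bsig{2}$ (to collect the finitely many witnesses into a single bound $m$) to show that every $p$ in such a chain is either comparable with almost all of $X_{\geq m}$ or incomparable with all of it.  This is absent from your sketch.

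Second, and more seriously, the contradiction you propose does not go through.  You claim that every counterexample to a late tail of $X = A_\sigma$ being ``dominated by an element of some $A_\eta \subseteq C_{\ell(\sigma)}$'' is ``impossible since counterexamples to tails of $A_\sigma$ must lie outside $C_{\ell(\sigma)}$.''  But an element of $C_{\ell(\tau)}$ can be below an element of $C_{\ell(\sigma)}$ without lying in $C_{\ell(\sigma)}$; there is no contradiction here, and this situation genuinely occurs.  The paper's resolution at the node $\sigma$ is not a contradiction at all: it builds an ascending sequence $Y$ in $C_{\ell(\sigma)}$ cofinal in the returning sequences $A_{\eta_\tau}$, proves (Claim~1) that every $p$ in a child's chain is either above all of $X$ or below almost all of $Y$, and then outputs the chain $B = \wh{X}_{\geq m}$ or $B = \wh{X}_{\geq m} \cup \wh{Y}_{\geq n}$ of order type $\omega$ or $\omega + \omega$.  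The counterexamples to tails of $X$ are \emph{absorbed}, by being comparable with infinitely many elements of the $Y$-part of $B$, rather than ruled out.  As you correctly observe, the cyclic $\pwb$ contradiction of Theorem~\ref{thm-RSpoInACA} is unavailable because a ladder $Z$ for $X$ produced by Lemma~\ref{lem-LadderFinder} may well satisfy $Z \pwb X$.  To complete your argument you would need to reproduce Claims~1 and~2 and the $\omega + \omega$ assembly, at which point the global ``suppose no homogeneous chain exists'' framing becomes unnecessary.
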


\begin{proof}
It suffices to show that $\rca + \isig{2} + \ads \vdash \rspocd_{<\infty}$ because $\rspo_{<\infty}$ and $\rspocd_{<\infty}$ are equivalent over $\rca$ as explained in Section~\ref{sec-Dilworth}.  Thus let $(P, <_P)$ be an infinite partial order with $k$-chain decomposition $C_0, \dots, C_{k-1}$ for some $k$.  We may assume that $k \geq 2$ because if $P$ is a chain, then $P$ itself is $(0,\infty)$-homogeneous.

Some $C_i$ is infinite by $\rt^1_{<\infty}$ and therefore contains either an infinite ascending sequence or an infinite descending sequence by $\ads$.  By relabeling the chains and by reversing the partial order if necessary, we may assume that there is an infinite ascending sequence $A = \la a_n : n \in \Nb \ra$ contained in chain $C_0$.  Let $\vec{P} = P \oplus {<_P} \oplus C_0 \oplus \cdots \oplus C_{k-1}$, and let $\findlad^{\vec{P}} \colon P^\Nb \times k \imp P^\Nb$ be the Turing functional relative to $\vec{P}$ from Lemma~\ref{lem-LadderFinder}.

Let $R = (k-1)^{\leq k}$ be the complete $(k-1)$-ary tree of height $k$.  Assign a label $\ell(\sigma) \in \{0, \dots, k-1\}$ to each $\sigma \in R$ as follows.  First, label the root $\emptyset \in R$ with $\ell(\emptyset) = 0$.  Now suppose that $\sigma \in R$ is not a leaf and has been labeled $\ell(\sigma)$.  Index the $k-1$ children of $\sigma$ as $\la \tau_i : i \in k \setminus \{\ell(\sigma)\} \ra$, and label $\ell(\tau_i) = i$ for each $i \in k \setminus \{\ell(\sigma)\}$.

For a $\sigma \in \Nb^{<\Nb}$ with $|\sigma| \geq 1$, let $\sigma^- = \sigma \rst (|\sigma|-1)$ denote $\sigma$ with the last term cut off.  For $\sigma \in R$, let $\findlad^{\vec{P},\sigma}$ denote the iteration of $\findlad^{\vec{P}}$ given by
\begin{align*}
\findlad^{\vec{P}, \emptyset}(A) &= A\\
\findlad^{\vec{P}, \sigma}(A) &= \findlad^{\vec{P}}(\findlad^{\vec{P}, \sigma^-}(A), \ell(\sigma)) & \text{if $|\sigma| \geq 1$}.
\end{align*}
So if $\sigma \in R$ has $|\sigma| \geq 1$, then $\findlad^{\vec{P}, \sigma}(A)$ is
\begin{align*}
\findlad^{\vec{P}}\bigl(\cdots \findlad^{\vec{P}}\bigl(\findlad^{\vec{P}}(A, \ell(\sigma \rst 1)), \ell(\sigma \rst 2)\bigr) \cdots, \ell(\sigma)\bigr).
\end{align*}

Now use $\isig{2}$ in the form of bounded $\Pi^0_2$ comprehension to form the subtree $T \subseteq R$ given by
\begin{align*}
T = \bigl\{\sigma \in R : \text{$\findlad^{\vec{P}, \sigma}(A)$ is total}\bigr\}.
\end{align*}
Notice that if $\sigma$ is in $T$, then $\findlad^{\vec{P}, \sigma}(A)$ computes an infinite ascending sequence in $C_{\ell(\sigma)}$ by Lemma~\ref{lem-LadderFinder} item~\ref{it-TotLadder} and the fact that $A$ is an infinite ascending sequence in $C_0 = C_{\ell(\emptyset)}$.

There are now two cases.  The first case is that $T$ has a leaf $\sigma$ at some level $<\! k$.  Let $\la \tau_i : i \in k \setminus \{\ell(\sigma)\} \ra$ again denote the indexing of $\sigma$'s children in $R$.  As $\sigma$ is in $T$, $\findlad^{\vec{P}, \sigma}(A)$ computes the infinite ascending sequence $B = \la b_n : n \in \Nb \ra$ in $C_{\ell(\sigma)}$ given by $b_n = \findlad^{\vec{P}, \sigma}(A)(n)$ for all $n$.  If no tail of $B$ is $(0,\infty)$-homogeneous, then by Lemma~\ref{lem-CtrExPts} there is an $i < k$ such that
\begin{align*}
\forall m \; \exists n \; \exists p \in C_i \; \bigl(p \geq_P b_m \;\andd\; p \mid_P b_n \bigr).
\end{align*}
It cannot be that $i = \ell(\sigma)$, so there must be such an $i \in k \setminus \{\ell(\sigma)\}$.  Thus, by Lemma~\ref{lem-LadderFinder} item~\ref{it-IsTot},
\begin{align*}
\findlad^{\vec{P}, \tau_i}(A) = \findlad^{\vec{P}}(\findlad^{\vec{P}, \sigma}(A), \ell(\tau_i)) = \findlad^{\vec{P}}(B, i)
\end{align*}
is total.  This means that $\tau_i \in T$, which contradicts that $\sigma$ is a leaf of $T$.  Therefore some tail of $B$ is indeed $(0,\infty)$-homogeneous.  We may thin this tail so that its range exists as a set, thereby producing a $(0,\infty)$-homogeneous chain in $P$.

The second case is that every leaf of $T$ is at level $k$.  Then $T$ and $\ell$ satisfy the hypotheses of Lemma~\ref{lem-TreeHelper}.  Let $\sigma$ be as in the conclusion of Lemma~\ref{lem-TreeHelper} for $T$ and $\ell$.  For each child $\tau$ of $\sigma$ in $T$, let $\eta_\tau \in T$ be such that $\eta_\tau \succ \tau$ and $\ell(\eta_\tau) = \ell(\sigma)$.  As $\sigma \in T$, $\findlad^{\vec{P}, \sigma}(A)$ computes the infinite ascending sequence $X = \la x_n : n \in \Nb \ra$ in $C_{\ell(\sigma)}$ given by $x_n = \findlad^{\vec{P}, \sigma}(A)(n)$ for all $n$.  Likewise, $\findlad^{\vec{P}, \eta_\tau}(A)$ also computes an infinite ascending sequence in $C_{\ell(\eta_\tau)} = C_{\ell(\sigma)}$ for each child $\tau$ of $\sigma$ in $T$.  Define the infinite sequence $Y = \la y_n : n \in \Nb \ra$ by setting
\begin{align*}
y_n = \textstyle{\max_{<_P}}\left\{\findlad^{\vec{P}, \eta_\tau}(A)(n) : \text{$\tau$ is a child of $\sigma$ in $T$}\right\}
\end{align*}
for each $n$.  Then $Y$ is an infinite ascending sequence in $C_{\ell(\sigma)}$ that is cofinal in the union of the sequences computed by the $\findlad^{\vec{P}, \eta_\tau}(A)$ for the children $\tau$ of $\sigma$ in $T$.

\begin{ClaimADSimpRSpo}\label{claim-GoodChild}
Let $\tau$ be a child of $\sigma$ in $T$.  Then every $p \in C_{\ell(\tau)}$ is either above every element of $X$ or below some element of $Y$ (and hence below almost every element of $Y$).
\end{ClaimADSimpRSpo}

\begin{proof}[Proof of Claim]
We have that $\findlad^{\vec{P}, \sigma}(A)$ computes the the infinite ascending sequence $X$ and that
\begin{align*}
\findlad^{\vec{P}, \tau}(A) = \findlad^{\vec{P}}(\findlad^{\vec{P}, \sigma}(A), \ell(\tau)) = \findlad^{\vec{P}}(X, \ell(\tau))
\end{align*}
is total, so $\findlad^{\vec{P}, \tau}(A)$ computes a ladder $Z$ for $X$ in $C_{\ell(\tau)}$.  Consider any $p \in C_{\ell(\tau)}$ and its location with respect to the elements of $Z$.  If $p$ is above every element of $Z$, then $p$ is above every element of $X$ because $Z$ is a ladder for $X$.  Suppose instead that $p$ is below some element $\findlad^{\vec{P}, \tau}(A)(n)$ of $Z$.  Consider now the path $\tau = \alpha_0 \prec \alpha_1 \prec \cdots \prec \alpha_{m-1} = \eta_\tau$ from $\tau$ to $\eta_\tau$ in $T$.  For each $i < m$, $\findlad^{\vec{P}, \alpha_i}(A)$ is total because $\alpha_i \in T$.  Thus $\findlad^{\vec{P}, \alpha_{i+1}}(A)$ computes a ladder for $\findlad^{\vec{P}, \alpha_i}(A)$ for each $i < m-1$.  Therefore
\begin{align*}
p &\leq_P \findlad^{\vec{P}, \tau}(A)(n) \leq_P \findlad^{\vec{P}, \alpha_1}(A)(n) \leq_P \cdots \leq_P \findlad^{\vec{P}, \alpha_{m-2}}(A)(n)\\
&\leq_P \findlad^{\vec{P}, \eta_\tau}(A)(n) \leq_P y_n.
\end{align*}
So $p$ is below an element of $Y$.
\end{proof}

\begin{ClaimADSimpRSpo}\label{claim-BadChild}
There is an $m$ such that whenever $i \in k \setminus \bigl(\{\ell(\sigma)\} \cup \{\ell(\tau) : \text{$\tau$ is a child of $\sigma$ in $T$}\}\bigr)$ and $p \in C_i$, then either $p$ is comparable with almost every element of $X_{\geq m}$, or $p$ is incomparable with every element of $X_{\geq m}$.
\end{ClaimADSimpRSpo}

\begin{proof}[Proof of Claim]
Let $I$ denote the finite set
\begin{align*}
I = k \setminus \bigl(\{\ell(\sigma)\} \cup \{\ell(\tau) : \text{$\tau$ is a child of $\sigma$ in $T$}\}\bigr),
\end{align*}
let $i \in I$, let $\tau$ be the child of $\sigma$ in $R$ with $\ell(\tau) = i$, and notice that $\tau \notin T$.  We have that $\findlad^{\vec{P}, \sigma}(A)$ computes the the infinite ascending sequence $X$, so 
\begin{align*}
\findlad^{\vec{P}, \tau}(A) = \findlad^{\vec{P}}(\findlad^{\vec{P}, \sigma}(A), \ell(\tau)) = \findlad^{\vec{P}}(X, \ell(\tau))
\end{align*}
must not be total because otherwise $\tau$ would be in $T$.  Therefore by Lemma~\ref{lem-LadderFinder} item~\ref{it-IsTot},
\begin{align*}
\exists m \; \forall n \; \forall p \in C_i \; \bigl(p \ngeq_P x_m \;\orr\; p \comp_P x_n \bigr).
\end{align*}
By applying $\bsig{2}$, we obtain a fixed $m$ such that $\forall i \in I \; \forall n \; \forall p \in C_i \; \left(p \ngeq_P x_m \;\orr\; p \comp_P x_n \right)$.  We show that this $m$ satisfies the claim.

Let $p \in C_i$ for an $i \in I$.  If $p$ is below an element of $X_{\geq m}$, then $p$ is below almost every element of $X_{\geq m}$ because $X_{\geq m}$ is an ascending sequence.  If $p$ is above an element of $X_{\geq m}$, then $p \geq_P x_m$, in which case $p$ is comparable with every element of $X_{\geq m}$.  So if $p$ is comparable with some element of $X_{\geq m}$, then $p$ is comparable with almost every element of $X_{\geq m}$.  That is, either $p$ is comparable with almost every element of $X_{\geq m}$, or $p$ is incomparable with every element of $X_{\geq m}$.
\end{proof}

We can now assemble a $(0,\infty)$-homogeneous chain $B$ from $X$ and $Y$.  Let $m$ be as in Claim~\ref{claim-BadChild}.  Thin the infinite ascending sequences $X_{\geq m}$ and $Y$ to infinite ascending sequences $\wh{X}_{\geq m}$ and $\wh{Y}$ whose ranges exist as sets.  Notice that $\wh{X}_{\geq m} \cup \wh{Y}$ is a chain because $\wh{X}_{\geq m}$ and $\wh{Y}$ are both contained in the chain $C_{\ell(\sigma)}$.  If $\wh{Y} \pwb \wh{X}_{\geq m}$, then take $B = \wh{X}_{\geq m}$.  Otherwise there is an $n$ such that $y_n$ is above every element of $\wh{X}_{\geq m}$.  In this case, take $B = \wh{X}_{\geq m} \cup \wh{Y}_{\geq n}$.  We show that $B$ is $(0,\infty)$-homogeneous.

Consider any $p \in P$.  If $p \in C_{\ell(\sigma)}$, then $p$ is comparable with every element of $B$ because $B \subseteq C_{\ell(\sigma)}$.  Suppose that $p \in C_i$ for an $i \in \{\ell(\tau) : \text{$\tau$ is a child of $\sigma$ in $T$}\}$.  By Claim~\ref{claim-GoodChild}, either $p$ is above every element of $X$ or below almost every element of $Y$.  In either case, $p$ is comparable with infinitely many elements of $B$.  Finally, suppose that $p \in C_i$ for an $i \in k \setminus \bigl(\{\ell(\sigma)\} \cup \{\ell(\tau) : \text{$\tau$ is a child of $\sigma$ in $T$}\}\bigr)$.  By Claim~\ref{claim-BadChild}, either $p$ is comparable with almost every element of $X_{\geq m}$, or $p$ is incomparable with every element of $X_{\geq m}$.  If $p$ is comparable with almost every element of $X_{\geq m}$, then $p$ is comparable with infinitely many elements of $B$.  Suppose instead that $p$ is incomparable with every element of $X_{\geq m}$.  If we took $B = \wh{X}_{\geq m}$, then $p$ is incomparable with every element of $B$.  Suppose that we took $B = \wh{X}_{\geq m} \cup \wh{Y}_{\geq n}$.  If $p$ is incomparable with every element of $\wh{Y}_{\geq n}$, then $p$ is incomparable with every element of $B$.  If $p$ is below an element of $\wh{Y}_{\geq n}$, then $p$ is below almost every element of $\wh{Y}_{\geq n}$ and therefore is comparable with infinity many elements of $B$.  If $p$ is above an element of $\wh{Y}_{\geq n}$, then $p$ is above every element of $\wh{X}_{\geq m}$ and therefore is comparable with infinitely many elements of $B$.  This completes the proof that $B$ is $(0,\infty)$-homogeneous for $P$.
\end{proof}

\begin{Theorem}\label{thm-ForwardRSpok}
For each fixed standard $k$, $\rca + \ads \vdash \rspo_k$.
\end{Theorem}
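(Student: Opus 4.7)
The plan is to follow the proof of Theorem~\ref{thm-ForwardRSpo} verbatim and show that, for fixed standard $k$, none of its steps actually requires $\isig{2}$. First, by the equivalence $\rspo_k \Leftrightarrow \rspocd_{5^k}$ coming from Theorem~\ref{thm-Kierstead} (as discussed in Section~\ref{sec-Dilworth}), it suffices to prove $\rca + \ads \vdash \rspocd_K$ for the fixed standard $K = 5^k$. Given an infinite partial order with $K$-chain decomposition $C_0, \dots, C_{K-1}$, I would apply $\rt^1_K$ (provable in $\rca$ for fixed standard $K$) together with $\ads$ to obtain an infinite ascending sequence $A$ contained, after relabeling, in $C_0$, and then construct the finite tree $R = (K-1)^{\leq K}$ together with its machine indices $e(\sigma)$ and labels $\ell(\sigma)$ exactly as in the earlier proof.

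The only real obstacle is the use of bounded $\Pi^0_2$ comprehension to form the subtree $T \subseteq R$. This is where the assumption that $k$ (and hence $K$) is a fixed standard natural number is crucial: $R$ is then a standard-finite set of some standard cardinality $N$, so that, applying the law of excluded middle to each of the finitely many $\Pi^0_2$ statements ``$\Phi_{e(\sigma)}^{\vec{P} \oplus A}$ is total'' for $\sigma \in R$, the argument splits into $2^N$ cases. In each case the characteristic function of $T$ is a specific standard-finite binary string, whose corresponding subset of $R$ exists in any model of $\rca$ by bounded $\Sigma^0_0$ comprehension. No $\isig{2}$ is needed to assemble these cases into a single formal proof, because the number of cases is a fixed standard number determined at the meta-level, and each case can be handled by a uniform $\rca$-subproof.

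The remaining ingredients all carry over without change: Lemma~\ref{lem-TreeHelper} is proved by $\Pi^0_1$ induction, which holds in $\rca$; the instance of $\bsig{2}$ used in Claim~\ref{claim-BadChild} is available because $\rca + \ads \vdash \bsig{2}$ (as $\ads$ lies above $\bsig{2}$ in Figure~\ref{fig-summary}); and Lemma~\ref{lem-LadderFinder} is already formulated in $\rca$. The main thing to get right is to phrase the finite case analysis on the truth-values of the totality predicates so that it genuinely yields a $\rca$-proof rather than an informal reading; I expect this to be straightforward but worth spelling out, since it is precisely the point at which the argument for fixed $k$ diverges from the argument for arbitrary finite width.
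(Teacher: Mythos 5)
Your proposal is correct and follows essentially the same route as the paper's own proof: reduce to $\rspocd_{5^k}$ via Kierstead's theorem, rerun the proof of Theorem~\ref{thm-ForwardRSpo}, and replace the single application of bounded $\Pi^0_2$ comprehension by a finite case analysis over the standard-finite tree $R$ using excluded middle for the totality predicates, with the residual $\bsig{2}$ uses covered because $\rca + \ads \vdash \bsig{2}$. Nothing further is needed.
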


\begin{proof}
The proof is essentially the same as that of Theorem~\ref{thm-ForwardRSpo}.  In the proof of Theorem~\ref{thm-ForwardRSpo}, the sole use of $\isig{2}$ is the application of bounded $\Pi^0_2$ comprehension to form the subtree $T = \bigl\{\sigma \in R : \text{$\findlad^{\vec{P}}(A, \sigma)$ is total}\bigr\}$ of the tree $R = (k-1)^{\leq k}$.  In the case where $k$ is fixed and standard, $R$ and its elements have fixed standard codes, so we may instead form $T$ by a giant case analysis, using excluded middle for the predicates $\bigl(\text{$\findlad^{\vec{P}}(A, \sigma)$ is total}\bigr) \;\orr\; \bigl(\text{$\findlad^{\vec{P}}(A, \sigma)$ is not total}\bigr)$ for the $\sigma \in R$.  The proof then continues exactly as in that of Theorem~\ref{thm-ForwardRSpo}.  The proof of Theorem~\ref{thm-ForwardRSpo} does make use of $\bsig{2}$ in addition to the aforementioned use of $\isig{2}$, but the relevant instances of $\bsig{2}$ are provable in $\rca$ when $k$ is fixed and standard (plus $\bsig{2}$ is available here anyway because $\rca + \ads \vdash \bsig{2}$ as explained in Section~\ref{sec-background}).
\end{proof}

When working under the assumption that the partial order $(P, <_P)$ contains an infinite ascending sequence, the proof of Theorem~\ref{thm-ForwardRSpo} produces either a chain of order-type $\omega$ (when taking $B = \wh{X}_{\geq m})$ or a chain of order-type $\omega + \omega$ (when taking $B = \wh{X}_{\geq m} \cup \wh{Y}_{\geq n}$).  Therefore, $\rca + \isig{2} + \ads$ in fact proves that every infinite partial order of finite width contains a $(0,\infty)$-homogeneous chain of order-type $\omega$, $\omega + \omega$, $\omega^*$, or $\omega^* + \omega^*$.  Likewise, for each fixed standard $k$, $\rca + \ads$ proves that every infinite partial order of width $\leq\! k$ contains a $(0,\infty)$-homogeneous chain of order-type $\omega$, $\omega + \omega$, $\omega^*$, or $\omega^* + \omega^*$.

In the particular case of $\rspocd_2$, we may weaken $\ads$ to $\sads$ and show that $\rca + \sads \vdash \rspocd_2$.  To do this, we make use of a strict version of $\srt^2_k$ where we assume not only that the coloring $c \colon [\Nb]^2 \imp k$ is stable, but that there is a fixed bound $n$ such that for every $x$ there are at most $n$ many $y > x$ where $c(x, y) \neq c(x, y+1)$.

\begin{Definition}
{\ }
\begin{itemize}
\item A $k$-coloring of pairs $c \colon [\Nb]^2 \imp k$ is \emph{$n$-stable} if, for every $x$, $|\{y > x : c(x,y) \neq c(x, y+1)\}| \leq n$.

\medskip

\item $\ssrt{n}{2}{k}$ is the restriction of $\rt^2_k$ to $n$-stable $k$-colorings of pairs $c$.

\medskip

\item $\ssrt{n}{2}{<\infty}$ denotes $\forall k \, (\ssrt{n}{2}{k})$.
\end{itemize}
\end{Definition}

\begin{Proposition}\label{prop-SSRT}
{\ }
\begin{enumerate}[(1)]
\item\label{it-SSRTRCA} For each fixed standard $n$ and $k$, $\rca \vdash \ssrt{n}{2}{k}$.

\medskip

\item\label{it-SSRTBSig2} For each fixed standard $n$, $\bsig{2}$ and $\ssrt{n}{2}{<\infty}$ are equivalent over $\rca$.

\medskip

\item\label{it-SSRTISig2} $\isig{2}$ and $\forall n \, (\ssrt{n}{2}{<\infty})$ are equivalent over $\rca$.
\end{enumerate}
\end{Proposition}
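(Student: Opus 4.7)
The plan is to treat the three items in turn, using that the stability parameter bounds the number of row changes in an $n$-stable coloring so that, for each fixed $n$ and $k$, the limit ``type'' of a row lies in the finite set $k^{\leq n+1}$.

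For item~(1), I plan to argue by external induction on $n$. The base case $n=0$ is immediate: a $0$-stable $k$-coloring satisfies $c(x,y)=c(x,x+1)$ for all $y>x$, so an infinite homogeneous set for the singleton coloring $e(x)=c(x,x+1)$, produced by the instance of $\rt^1_k$ available in $\rca$ for each fixed standard $k$, is also homogeneous for $c$. For the inductive step, given an $(n+1)$-stable $c$, the goal is to thin the ground set and pass to an auxiliary $n$-stable coloring to which the induction hypothesis applies. First I would apply $\rt^1_k$ to $x \mapsto c(x,x+1)$ to stabilize the common starting color along an infinite set $H$; then I would select $x_0<x_1<\cdots$ in $H$ so that, whenever row $x_i$ has any color changes, at least one of them falls in the gap $(x_i,x_{i+1}]$. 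On the resulting sequence, the induced coloring $\tilde c(i,j)=c(x_i,x_j)$ will be $n$-stable, and a homogeneous set for $\tilde c$ will translate back to one for $c$. Keeping this selection $\rca$-computable will require interleaving a bounded-time search for a change of row $x_i$ with a growth-controlled fallback in case no change has yet been observed.

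Item~(2) will then be essentially an extension of~(1). The direction $\Leftarrow$ is immediate because the trivially $0$-stable coloring $c(x,y)=e(x)$ reduces $\rt^1_k$ to $\ssrt{n}{2}{k}$, so $\ssrt{n}{2}{<\infty} \imp \rt^1_{<\infty} \equiv \bsig{2}$. For the direction $\Rightarrow$, I plan to formalize the argument of~(1) inside $\rca+\bsig{2}$ with $k$ now a variable: the external induction on $n$ still operates in the meta-theory (since $n$ remains fixed standard), while each appeal to $\rt^1_k$ is replaced by its variable-$k$ counterpart $\rt^1_{<\infty}$, which $\bsig{2}$ supplies.

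For item~(3), the direction $\Rightarrow$ handles variable $n$: for a fixed coloring $c$ the limit-type function $\sigma\colon\Nb\imp k^{\leq n+1}$ is $\Sigma^0_2$-definable, so $\isig{2}$, which furnishes bounded $\Sigma^0_2$-comprehension, lets me stabilize $\sigma$ along an infinite set and then apply a greedy construction, using $\bsig{2}$, to produce a homogeneous set. The direction $\Leftarrow$ is the most substantial step; I plan to argue contrapositively from a failure of $\isig{2}$, coded by a $\Sigma^0_2$-formula $\varphi(x) \equiv \exists u \, \forall v \, \psi(x,u,v)$ whose least-witness function is cofinal in a $\Sigma^0_2$-cut, and then build an $n$-stable coloring (for some specific $n$) whose row-change pattern encodes successive witness approximations, so that an infinite homogeneous set would collapse the cut. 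The main obstacle will be calibrating the stability bound $n$ to the quantifier complexity of the witnessing formula so that $\ssrt{n}{2}{<\infty}$ can legitimately be applied at that particular $n$; this is the combinatorial heart of the proposition and is where the proof has genuine content, in the spirit of the separation techniques for $\bsig{2}$ versus $\isig{2}$ developed by Chong, Slaman, and Yang.
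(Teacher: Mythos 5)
There are two genuine gaps here. First, the inductive step of your item (1) does not go through as described. To make the induced coloring $\tilde{c}(i,j) = c(x_i,x_j)$ be $n$-stable you must guarantee that, for every $i$ whose row has at least one change, some change of $c(x_i,\cdot)$ lands below $x_{i+1}$; but whether row $x_i$ has any change at all is a $\Sigma^0_1$ question, and your ``bounded-time search with a growth-controlled fallback'' does not resolve it: if the fallback fires and row $x_i$ later turns out to have $n+1$ changes, all of them above $x_{i+1}$, then row $i$ of $\tilde{c}$ again has $n+1$ changes and the induction hypothesis does not apply to $\tilde{c}$. The step can be repaired by an external case split (either only finitely many rows have a change, reducing to the $0$-stable case, or infinitely many do, in which case $\rca$ gives an infinite recursive subset of that $\Sigma^0_1$ set on which the position of the first change is computable), but this is essentially the device the paper uses directly and without any induction on $n$: it fixes the largest $i \leq n$ such that infinitely many rows have at least $i$ changes (by external excluded middle for fixed $n$; by the $\Pi^0_2$ least element principle in item (3)), restricts to rows with exactly $i$ changes past a bound, computes the position of the last change there, and finishes with one application of $\rt^1_k$.

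Second, and more seriously, your reversal in item (3) is not a proof: you describe a contrapositive strategy via a $\Sigma^0_2$ cut and explicitly defer ``calibrating the stability bound $n$'' as the main obstacle, but that calibration is the entire content of the direction, and no cut machinery is needed. The paper derives the $\Pi^0_2$ least element principle (equivalent to $\isig{2}$ over $\rca$) directly: given $\forall x\,\exists y\,\psi(m,x,y)$ known to hold at $m=k$, define $c(s,t)$ to be the least $i \leq k$ with $\forall x \leq s\;\exists y \leq t\;\psi(i,x,y)$, and $k+1$ if there is none; for fixed $s$ this is non-increasing in $t$, so the coloring is $(k+2)$-stable, and the color of any homogeneous set is the least $i$ with $\forall x\,\exists y\,\psi(i,x,y)$. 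Your forward direction of (3) also needs care: the ``limit-type function'' $\sigma$ is only $\Delta^0_2$-definable on an unbounded domain, so bounded $\Sigma^0_2$ comprehension does not hand it to you as a set to which $\rt^1_{<\infty}$ can be applied; the paper instead uses the $\Pi^0_2$ least element principle only to locate the single number $i$ equal to the maximal number of changes occurring in infinitely many rows.
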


\begin{proof}
For item~\ref{it-SSRTRCA}, let $c \colon [\Nb]^2 \imp k$ be an $n$-stable $k$-coloring of pairs.  Let $\varphi(x, i, c)$ be the $\Sigma^0_1$ formula expressing that there is a sequence $x < y_0 < y_1 < \dots < y_{i-1}$ of length $i$ where $\forall j < i \; (c(x, y_j) \neq c(x, y_j + 1))$.  Note that $\varphi(x, 0, c)$ holds for every $x$.  By repeated applications of excluded middle, there is a maximum $i \leq n$ such that $\varphi(x, i, c)$ holds for infinitely many $x$.  By the maximality of $i$, there is a bound $b$ such that if $x > b$ and $j > i$, then $\neg \varphi(x, j, c)$.  $\rca$ suffices to formalize the well-known fact that every infinite recursively enumerable set contains an infinite recursive subset.  That is, for every $\Sigma^0_1$ formula $\psi(x)$ (possibly with parameters), $\rca$ proves that if $\psi(x)$ holds for infinitely many $x$, then there is an infinite set $X$ such that $\forall x \, (x \in X \imp \psi(x))$.  Applying this to the $\Sigma^0_1$ formula $(x > b) \andd \varphi(x, i, c)$, we obtain an infinite set $X$ such that $\forall x \, (x \in X \;\imp\; (x > b) \andd \varphi(x, i, c))$.  Then for every $x \in X$, there are exactly $i$ many elements $y > x$ with $c(x, y) \neq c(x, y+1)$.  So for every $x \in X$, there is a unique sequence $x < y_0^x < \cdots < y_{i-1}^x$ where $\forall j < i \; (c(x, y_j^x) \neq c(x, y_j^x + 1))$.  It follows that $c(x, y) = c(x, y_{i-1}^x + 1)$ for all $x \in X$ and $y > y_{i-1}^x$.  Define a $k$-coloring of singletons $f \colon X \imp k$ by $f(x) = c(x, y_{i-1}^x + 1)$.  Apply $\rt^1_k$ to $f$ to obtain a set $G \subseteq X$ that is homogeneous for $f$ for some color $\ell < k$.  Now define an infinite set $H = \{x_s : s \in \Nb\} \subseteq G$ with $x_0 < x_1 < x_2 < \cdots$ by letting each $x_s$ be the least element of $G$ with $x_s \geq \max\{y_{i-1}^{x_0}, y_{i-1}^{x_1}, \dots, y_{i-1}^{x_{s-1}}\} + 1$.  Then $H$ is homogeneous for $c$.  If $s < t$, then $x_t \geq y_{i-1}^{x_s} + 1$, so $c(x_s, x_t) = c(x_s, y_{i-1}^{x_s} + 1) = f(x_s) = \ell$.

For the forward direction of item~\ref{it-SSRTBSig2}, we may use the same proof as for item~\ref{it-SSRTRCA}, except now $k$ is arbitrary so we must use $\rt^1_{<\infty}$ in place of $\rt^1_k$.  For the reversal, it is easy to see that $\rca + \ssrt{0}{2}{<\infty} \vdash \rt^1_{<\infty}$.  Let $f \colon \Nb \imp k$ be a $k$-coloring of singletons for some $k$, and define $c \colon [\Nb]^2 \imp k$ by $c(x,y) = f(x)$.  Then $c$ is a $0$-stable $k$-coloring of pairs, and every $H$ that is homogeneous for $c$ is also homogeneous for $f$.

For the forward direction of item~\ref{it-SSRTISig2}, we may use the same proof as for item~\ref{it-SSRTBSig2}, except now $n$ is also arbitrary, so we must use $\isig{2}$ in the form of the $\Pi^0_2$ least element principle (see Section~\ref{sec-background}) to obtain $i$.  To do this, apply the $\Pi^0_2$ least element principle to obtain the least $j$ such that $\forall s \; \exists x > s \; \varphi(x, n-j, c)$.  Then $i = n-j$ is the greatest $i \leq n$ such that $\varphi(x, i, c)$ holds for infinitely many $x$.  For the reversal, we show that $\rca + \forall n \, (\ssrt{n}{2}{<\infty})$ proves the $\Pi^0_2$ least element principle.  Let $\forall x \, \exists y \, \psi(m, x, y)$ be a $\Pi^0_2$ formula, possibly with undisplayed parameters, where $\psi$ is $\Sigma^0_0$.  Let $k$ be such that $\forall x \, \exists y \, \psi(k, x, y)$.  We want to find the least $i$ such that $\forall x \, \exists y \, \psi(i, x, y)$.  Define a $(k+2)$-coloring of pairs $c \colon [\Nb]^2 \imp k+2$ by
\begin{align*}
c(s,t) =
\begin{cases}
i & \text{if $i \leq k$ is least such that $\forall x \leq s \; \exists y \leq t \; \psi(i, x, y)$}\\
k+1 & \text{if $\forall i \leq k \; \exists x \leq s \; \forall y \leq t \; \neg\psi(i, x, y)$.}
\end{cases}
\end{align*}
The coloring $c$ is $(k+2)$-stable because, for fixed $s$, if $s < t_0 < t_1$, then $c(s, t_0) \geq c(s, t_1)$.  By $\forall n \, (\ssrt{n}{2}{<\infty})$, let $H$ be a set that is homogeneous for $c$ with color $i$.  Then $i$ is least such that $\forall x \, \exists y \, \psi(i, x, y)$.  To see this, we first show that if $j \leq k$ and $\forall x \, \exists y \, \psi(j, x, y)$, then $i \leq j$.  Let $s \in H$, and, by $\bsig{0}$ and the assumption $\forall x \, \exists y \, \psi(j, x, y)$, let $t \in H$ be large enough so that $t > s$ and $\forall x \leq s \; \exists y \leq t \; \psi(j, x, y)$.  Then $i = c(s,t) \leq j$, so $i \leq j$.  It follows that $i \leq k$ by the assumption $\forall x \, \exists y \, \psi(k, x, y)$.  Now we show that $\forall x \, \exists y \, \psi(i, x, y)$.  Consider any $x_0$, and let $s, t \in H$ be such that $x_0 < s < t$.  Then $c(s,t) = i$ and $i \leq k$, so $i$ is least such that $\forall x \leq s \; \exists y \leq t \; \psi(i, x, y)$.  In particular, $\exists y \, \psi(i, x_0, y)$ because $x_0 < s$.  Therefore $\forall x \, \exists y \, \psi(i, x, y)$.  Thus $i$ is least such that $\forall x \, \exists y \, \psi(i, x, y)$.
\end{proof}

Our proof of $\rspocd_2$ in $\rca + \sads$ only uses the principle $\ssrt{2}{2}{3}$ from Proposition~\ref{prop-SSRT} item~\ref{it-SSRTRCA}.  We include items~\ref{it-SSRTBSig2} and~\ref{it-SSRTISig2} for completeness.  For fixed standard $k \geq 2$, one might also consider the principle $\forall n \, (\ssrt{n}{2}{k})$.  That $\rca + \isig{2} \vdash \forall n \, (\ssrt{n}{2}{k})$ follows from Proposition~\ref{prop-SSRT} item~\ref{it-SSRTISig2}, but there cannot be a reversal because $\rca + \srt^2_k \vdash \forall n \, (\ssrt{n}{2}{k})$, but $\rca + \srt^2_k \nvdash \isig{2}$ (see Section~\ref{sec-background}).  We did not determine if $\rca + \bsig{2} \vdash \forall n \, (\ssrt{n}{2}{k})$.

\begin{Theorem}\label{thm-ForwardRSpoCD2}
$\rca + \sads \vdash \rspocd_2$.
\end{Theorem}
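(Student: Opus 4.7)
The plan is to imitate the proof of Theorem~\ref{thm-ForwardRSpok} in the specific case $k=2$, replacing the one use of $\ads$ with an appeal to $\sads$, and using $\ssrt{2}{2}{3}$ (which is $\rca$-provable by Proposition~\ref{prop-SSRT}~(1)) to bridge the gap between the general linear order $(C_0,<_P)$ and the stable linear orders to which $\sads$ applies.

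Let $(P,<_P)$ be an infinite partial order with $2$-chain decomposition $P = C_0\cup C_1$. By $\rt^1_2$, which is available in $\rca$, at least one of $C_0,C_1$ is infinite; without loss of generality $C_0$ is infinite. The first task is to produce an infinite ascending or descending sequence in $C_0$ using only $\sads$. The linear order $(C_0,<_P)$ need not be stable, so $\sads$ does not apply directly. Instead, enumerate $C_0 = \la a_i : i\in\Nb\ra$ and design a $3$-coloring $c\colon [\Nb]^2\imp 3$ of index pairs that encodes the $<_P$-relation between $a_i$ and $a_j$ together with auxiliary information drawn from the interaction with $C_1$, arranged so that for each fixed $i$ the colour $c(i,j)$ changes at most twice as $j$ varies. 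Hence $c$ is $2$-stable. Apply $\ssrt{2}{2}{3}$ to obtain an infinite homogeneous set $H\subseteq \Nb$. The induced suborder $(\{a_i : i\in H\},<_P)$ is then stable, so $\sads$ extracts an infinite ascending or descending sequence; after possibly reversing the partial order, we may assume it is an ascending sequence $A = \la a_{i_n} : n\in\Nb\ra$ in $C_0$.

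With $A$ in hand, mimic the proof of Theorem~\ref{thm-ForwardRSpok} specialised to $k=2$. The tree $R = 1^{\leq 2}$ has three nodes: root $r$ labelled $0$, its unique child $\tau$ labelled $1$, and $\tau$'s unique child $\eta$ labelled $0$. Let $e(r)$ compute $A$ in $C_0$, and define $e(\tau) = f(e(r),1)$ and $e(\eta) = f(e(\tau),0)$ via the recursive function $f$ of Lemma~\ref{lem-LadderFinder}. Because $R$ has only three standard-indexed nodes, the totality tree $T\subseteq R$ is formed by a finite case analysis on the totality predicates rather than by bounded $\Pi^0_2$ comprehension, so $\isig{2}$ is not needed and $\bsig{2}$ required by Lemma~\ref{lem-CtrExPts} is $\rca$-provable for fixed $k=2$. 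If $T$ has a leaf at a level below $2$, then Lemma~\ref{lem-CtrExPts} applied to the corresponding sequence (noting that the counterexample-chain index must be the unique chain distinct from the one containing that sequence) produces a tail that is $(0,\infty)$-homogeneous. If every leaf of $T$ is at level $2$, then $\eta\in T$; applying Lemma~\ref{lem-TreeHelper} (in which for $k=2$ the root $r$ is the witness), assemble the $(0,\infty)$-homogeneous chain from $A$ and the sequence $Y$ computed by $\Phi_{e(\eta)}^{\vec{P}\oplus A}$ exactly as in the closing argument of the proof of Theorem~\ref{thm-ForwardRSpo}. Note that for $k = 2$ the exceptional index set $I$ of Claim~\ref{claim-BadChild} is empty, so Claim~\ref{claim-GoodChild} together with the fact that elements of $C_{\ell(\sigma)}$ are comparable with every element of the constructed chain suffices to establish $(0,\infty)$-homogeneity.

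The main obstacle is the design of the $2$-stable $3$-coloring $c$ in the first step, since a naive coloring recording only the $<_P$-relation between $a_i$ and $a_j$ is not $2$-stable for arbitrary enumerations of $C_0$; the coloring must genuinely exploit the decomposition $P = C_0\cup C_1$ so that homogeneous sets index suborders of $C_0$ that are guaranteed to be stable in $<_P$. Once that coloring is correctly in place, the rest of the argument is a direct specialisation of the $k=2$ case of Theorem~\ref{thm-ForwardRSpok} with $\ads$ replaced by the successive uses of $\ssrt{2}{2}{3}$ and $\sads$, and with the $\isig{2}$ appeals dropped in favour of finite case analysis on the three-node tree $R$.
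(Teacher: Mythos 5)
The second half of your argument --- specialising Theorem~\ref{thm-ForwardRSpok} to $k=2$ once an infinite ascending or descending sequence is in hand --- is fine and matches the paper, which isolates exactly this as the $\rca$-provable statement $(\star)$. The problem is your first step, which is where all the content of this theorem lives: it is both left undone and aimed at a provably impossible target. You propose to always ``produce an infinite ascending or descending sequence in $C_0$ using only $\sads$'' (plus the $\rca$-provable $\ssrt{2}{2}{3}$), via an unspecified $2$-stable $3$-coloring whose homogeneous sets index stable suborders of $C_0$. But take $C_1 = \emptyset$, so that $C_0$ is an arbitrary infinite linear order: a procedure of the kind you describe would show $\rca + \sads \vdash \ads$, contradicting the known strictness of the implication from $\ads$ to $\sads$ (Figure~\ref{fig-summary}). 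So no choice of coloring can make your step~1 work as stated; the case analysis must contain branches in which no monotone sequence is ever found and a $(0,\infty)$-homogeneous chain is instead produced outright (when $C_1$ is empty, for instance, $C_0$ itself is such a chain).

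This is exactly how the paper's proof is structured. It first disposes of the case where some infinite $D_0 \subseteq C_0$ is incomparable with all but finitely many elements of $C_1$: a single application of $\rt^1_{<\infty}$ to the comparability pattern with those finitely many elements then gives a homogeneous chain directly, with no monotone sequence needed. It then colors pairs $(n,m)$ by the comparability behaviour of $p_n \in C_0$ with the elements $q_i \in C_1$ for $i \le m$ --- note this records $C_0$--$C_1$ comparability, not the $<_P$-relation among pairs of $C_0$-elements, and homogeneity for it does \emph{not} make the indexed suborder of $C_0$ stable. The three homogeneity outcomes lead either to a homogeneous chain outright or to an injection matching infinitely many $p_n$ with comparable elements $q_{f(n)}$ of $C_1$. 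Only inside that last case does a candidate stable linear order appear, and even there the paper must separately handle the subcase where it fails to be stable, again by building the homogeneous chain directly as the union of a down-set in $C_0$ and an up-set in $C_1$. Your proposal omits all of these branches, so the gap is not merely a missing definition of the coloring but a missing piece of the argument's architecture.
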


\begin{proof}
By inspecting the proofs of Theorems~\ref{thm-ForwardRSpo} and~\ref{thm-ForwardRSpok}, we see that the only use of $\ads$ is to produce either an infinite ascending sequence or an infinite descending sequence in the partial order.  That is, $\rca$ proves the following for each fixed standard $k$.

\begin{itemize}
\item[$(\star)$] Let $(P, <_P)$ be an infinite $k$-chain decomposable partial order that contains either an infinite ascending sequence or an infinite descending sequence.  Then $P$ contains a $(0,\infty)$-homogeneous chain.
\end{itemize}

Let $(P, <_P)$ be an infinite partial order with $2$-chain decomposition $C_0, C_1$.  The plan of the proof is to either produce a $(0,\infty)$-homogeneous chain outright or to apply $\sads$ to obtain an infinite ascending sequence or an infinite descending sequence in $P$, in which case we may produce a $(0,\infty)$-homogeneous chain in $P$ by applying $(\star)$.  Notice that we may make free use of $\bsig{2}$ because $\rca + \sads \vdash \bsig{2}$ as explained in Section~\ref{sec-background}.

The partial order $P$ is infinite, so at least one of $C_0$ and $C_1$ is infinite.  Assume that $C_0$ is infinite for the sake of argument.

\begin{ClaimSADSimpRSpocd2}\label{claim-CofIncop}
Suppose that there are an infinite $D_0 \subseteq C_0$ and a finite $D_1 \subseteq C_1$ such that $D_0 \mid_P (C_1 \setminus D_1)$.  Then $P$ contains a $(0,\infty)$-homogeneous chain.
\end{ClaimSADSimpRSpocd2}

\begin{proof}[Proof of Claim]
Suppose that $D_1 = \{q_0, \dots, q_{n-1}\}$, and define a $2^n$-coloring $f \colon D_0 \imp 2^n$ by $f(p) = \la b_0, \dots, b_{n-1}\ra$, where, for each $i < n$, $b_i = 0$ if $p \mid_P q_i$ and $b_i = 1$ if $p \comp_P q_i$.  Apply $\rt^1_{<\infty}$ to $f$ to obtain an infinite set $H \subseteq D_0$ that is homogeneous for $f$.  Then $H$ is a $(0,\infty)$-homogeneous chain for $P$.  $H$ is a chain because $H \subseteq D_0 \subseteq C_0$ and $C_0$ is a chain.  Consider a $q \in P$.  If $q \in C_0$, then $q$ is comparable with every element of $H$ because $H \subseteq C_0$ and $C_0$ is a chain.  If $q \in C_1 \setminus D_1$, then $q$ is incomparable with every element of $D_0$ by assumption and therefore is incomparable with every element of $H$ because $H \subseteq D_0$.  If $q \in D_1$, then $q = q_i$ for some $i < n$, so by the homogeneity of $H$, either $q$ is comparable with every element of $H$ or $q$ is incomparable with every element of $H$.
\end{proof}

If $C_1$ is finite, then we may apply Claim~\ref{claim-CofIncop} with $D_0 = C_0$ and $D_1 = C_1$ to obtain a $(0,\infty)$-homogeneous chain for $P$.  Thus we may assume that $C_0$ and $C_1$ are both infinite.  Let $\la p_n : n \in \Nb \ra$ be a bijective enumeration of $C_0$, and let $\la q_n : n \in \Nb \ra$ be a bijective enumeration of $C_1$.  The goal is now to work into a situation where the following claim applies.

\begin{ClaimSADSimpRSpocd2}\label{claim-InjToChain}
Suppose that there is an infinite $H \subseteq \Nb$ and an injection $f \colon H \imp \Nb$ where $\forall n \in H \; \bigl(p_n \comp_P q_{f(n)}\bigr)$.  Then $P$ contains a $(0,\infty)$-homogeneous chain.
\end{ClaimSADSimpRSpocd2}

\begin{proof}[Proof of Claim]
We may assume that the range of $f$ exists as a set by shrinking $H$ if necessary.

For this argument, for a $p \in C_0$, let $p\ua = \{x \in C_0 : x \geq_P p\}$ and $p\da = \{x \in C_0 : x \leq_P p\}$ denote the upward and downward closures of $p$ in $C_0$.  Similarly, for a $q \in C_1$, let $q\ua$ and $q\da$ denote the upward and downward closures of $q$ in $C_1$.

Let $X = \{n \in H : p_n <_P q_{f(n)}\}$, and let $Y = \{n \in H : p_n >_P q_{f(n)}\}$.  Then $H = X \cup Y$, so at least one of $X$ and $Y$ is infinite.  The two cases are symmetric, so suppose that $X$ is infinite for the sake of argument.  Let $L = \{p_n : n \in X\}$.  Then $L \subseteq C_0$ and $C_0$ is a chain, so $L$ is an infinite linear order.  If $L$ is also stable, then it has either an infinite ascending sequence or an infinite descending sequence by $\sads$.  Thus $P$ has either an infinite ascending sequence or an infinite descending sequence, so $P$ has a $(0,\infty)$-homogeneous chain by $(\star)$.

If $L$ is not stable, then let $n \in X$ be such that $p_n\ua \cap L$ and $p_n\da \cap L$ are both infinite.  Suppose that there is an $m \in X$ where $p_n \leq_P p_m$ and $q_{f(m)}\ua$ is infinite.  Then $p_n \leq_P p_m <_P q_{f(m)}$, so $C = p_n \da \cup q_{f(m)}\ua$ is an infinite chain in $P$.  Furthermore, $C$ is $(0,\infty)$-homogeneous because it has infinite intersection with both $C_0$ and $C_1$.  Finally, suppose instead that $q_{f(m)}\ua$ is finite whenever $m \in X$ and $p_n \leq_P p_m$.  Then the set $\{q_{f(m)} : m \in X \;\andd\; p_m \geq_P p_n\} \subseteq C_1$ is an infinite chain because $p_n \ua \cap L$ is infinite, but it has no least element.  We may therefore define an infinite descending sequence in $\{q_{f(m)} : m \in X \;\andd\; p_m \geq_P p_n\}$.  Thus $P$ has an infinite descending sequence, so it has a $(0,\infty)$-homogeneous chain by $(\star)$.
\end{proof}

To finish the proof, define a coloring $c \colon [\Nb]^2 \imp 3$ as follows.
\begin{align*}
c(n,m) =
\begin{cases}
0 & \text{if $\forall i \leq m \; (p_n \mid_P q_i)$}\\
1 & \text{if $\exists i \, (n < i \leq m \;\andd\; p_n \comp_P q_i)$}\\
2 & \text{if $\exists i \, (i \leq n \;\andd\; p_n \comp_P q_i) \;\andd\; \forall i \, (n < i \leq m \;\imp\; p_n \mid_P q_i)$}.
\end{cases}
\end{align*}
For every $n$, the color of $c(n,m)$ changes at most twice.  This can be seen by considering the value of $c(n, n+1)$ and making the following observations.
\begin{itemize}
\item If $c(n, m_0) = 0$ for some $m_0 > n$, then $c(n,m) \neq 2$ for all $m \geq m_0$.

\medskip

\item If $c(n, m_0) = 1$ for some $m_0 > n$, then $c(n,m) = 1$ for all $m \geq m_0$.

\medskip

\item If $c(n, m_0) = 2$ for some $m_0 > n$, then $c(n,m) \neq 0$ for all $m \geq m_0$.
\end{itemize}
Apply $\ssrt{2}{2}{3}$, which is provable in $\rca$ by Proposition~\ref{prop-SSRT} item~\ref{it-SSRTRCA}, to $c$ to obtain a set $H$ that is homogeneous for $c$, and consider the color for which $H$ is homogeneous.

Suppose that $H$ is $0$-homogeneous.  Let $C = \{p_n : n \in H\}$.  Then $C \subseteq C_0$ is an infinite chain, and thus every element of $C_0$ is comparable with every element of $C$.  Furthermore, every element of $C_1$ is incomparable with every element of $C$ because $H$ is $0$-homogeneous.  So $C$ is a $(0,\infty)$-homogeneous chain for $P$.

Suppose that $H$ is $1$-homogeneous.  Define a function $f \colon H \imp \Nb$ by $f(n) = i$, where $i$ is $<$-least with $i > n$ and $p_n \comp_P q_i$.  Such an $i$ exists by the $1$-homogeneity of $H$.  The function $f$ is injective because if $n$ and $m$ are members of $H$ with $n < m$, then $f(n) \leq m$ and $f(m) > m$.  Thus $H$ and $f$ satisfy the hypothesis of Claim~\ref{claim-InjToChain}, so $P$ contains a $(0,\infty)$-homogeneous chain.

Finally, suppose that $H$ is $2$-homogeneous.  First, suppose that there is a bound $m_0$ such that $\forall n \in H \; \forall i \geq m_0 \; (p_n \mid_P q_i)$.  In this case, $D_0 = \{p_n : n \in H\}$ and $D_1 = \{q_0, \dots, q_{m_0 - 1}\}$ satisfy the hypothesis of Claim~\ref{claim-CofIncop}, so $P$ contains a $(0,\infty)$-homogeneous chain.  If there is no such bound $m_0$, then $\forall m_0 \; \exists n \in H \; \exists i \geq m_0 \; (p_n \comp_P q_i)$.  However, given $m_0$, there cannot be a witnessing $n \in H$ and $i \geq m_0$ with $n < m_0$.  If there were, then we could choose an $m \in H$ with $m > i$, in which case we would have $n, m \in H$, $n < i < m$, and $p_n \comp_P q_i$.  We would then have that $c(n,m) = 1$, which contradicts that $H$ is $2$-homogeneous.  Therefore, the situation is that $\forall m \, \exists n \geq m \, \exists i \geq m \, (n \in H \;\andd\; p_n \comp_P q_i)$.  We can thus define an infinite subset $H_0 = \{n_0, n_1, n_2, \dots\}$ of $H$ and an injection $f \colon H_0 \imp \Nb$ as follows.  Given $n_0 < n_1 < \cdots < n_{\ell-1}$ and $f(n_0), f(n_1), \dots, f(n_{\ell-1})$, search for the first pair $\la n, i \ra$ with $n$ and $i$ both greater than $\max_<\{n_0, \dots, n_{\ell-1}, f(n_0), \dots, f(n_{\ell-1})\}$, with $n \in H$, and with $p_n \comp_P q_i$; put $n_\ell = n$; and put $f(n_\ell) = i$.  Then $H_0$ and $f$ satisfy the hypothesis of Claim~\ref{claim-InjToChain}, so $P$ contains a $(0,\infty)$-homogeneous chain.
\end{proof}

Finally, we show that $\cofrspo_2$ is provable in $\rca + \isig{2} + \ads$.  To do this, we first adapt Lemma~\ref{lem-LadderFinder} for use with partial orders of width $\leq\! 2$.

\begin{Lemma}\label{lem-W2LadderFinder}
The following is provable in $\rca$. Let $(P, <_P)$ be a partial order of width $\leq\! 2$, and let $\vec{P}$ denote $\vec{P} = P \oplus {<_P}$.  Then there is a Turing functional $\findlad^{\vec{P}} \colon P^\Nb \imp P^\Nb$ relative to $\vec{P}$ with the following properties for every infinite sequence $A = \la a_n : n \in \Nb \ra$ in $P$.

\begin{enumerate}[(1)]
\item\label{it-W2IsTot} If $A$ is an infinite ascending sequence in $P$ and
\begin{align*}
\forall m \; \exists n > m \; \exists p \in P \; \bigl(p \geq_P a_m \;\andd\; p \mid_P a_n\bigr),
\end{align*}
then $\findlad^{\vec{P}}(A)$ is total.

\medskip

\item\label{it-W2TotLadder} If $\findlad^{\vec{P}}(A)$ is total, then it computes a ladder for $A$ in $P$ such that
\begin{align*}
\forall m \; \exists n > m \; \bigl(\findlad^{\vec{P}}(A)(m) \mid_P a_n\bigr).
\end{align*}
\end{enumerate}
\end{Lemma}

\begin{proof}
Let $A = \la a_n : n \in \Nb \ra$ be an infinite sequence in $P$.  Compute $\findlad^{\vec{P}}(A)(0)$ by searching for an $n > 0$ and a $p_0 \in P$ with $p_0 \geq_P a_0$ and $p_0 \mid_P a_n$.  If $p_0$ is found, then output $\findlad^{\vec{P}}(A)(0) = p_0$.  To compute $\findlad^{\vec{P}}(A)(m+1)$, first compute $p_m = \findlad^{\vec{P}}(A)(m)$.  Then search for an $n > m+1$ and a $p_{m+1} \in P$ with $p_{m+1} >_P p_m$, $p_{m+1} \geq_P a_{m+1}$, and $p_{m+1} \mid_P a_n$.  If $p_{m+1}$ is found, then output $\findlad^{\vec{P}}(A)(m+1) = p_{m+1}$.

Item~\ref{it-W2TotLadder} follows immediately from the definition of $\findlad^{\vec{P}}$.  If $A = \la a_n : n \in \Nb \ra$ is an infinite sequence in $P$ and if $\findlad^{\vec{P}}(A)$ is total, then for every $m$ it must be that $\findlad^{\vec{P}}(A)(m) \in P$, that $a_m \leq_P \findlad^{\vec{P}}(A)(m)$, that $\findlad^{\vec{P}}(A)(m) <_P \findlad^{\vec{P}}(A)(m+1)$, and that there is an $n > m$ such that $\findlad^{\vec{P}}(A)(m) \mid_P a_n$.

For item~\ref{it-W2IsTot}, let $A = \la a_n : n \in \Nb \ra$ be an infinite ascending sequence in $P$, and suppose that for every $m$ there are an $n > m$ and a $p \in P$ with $p \geq_P a_m$ and $p \mid_P a_n$.  We use $\isig{1}$ to show that $\findlad^{\vec{P}}(A)$ is total.  By assumption, there are an $n > 0$ and a $p_0 \in P$ with $p_0 \geq_P a_0$ and $p_0 \mid_P a_n$.  Thus $\findlad^{\vec{P}}(A)(0)$ is defined.  Inductively assume that $p_m = \findlad^{\vec{P}}(A)(m)$ is defined.  Then $p_m \geq_P a_m$, and there is an $\ell > m$ such that $p_m \mid_P a_\ell$.  By assumption, there are an $s > \ell$ and an $x \in P$ with
\begin{align*}
x \geq_P a_\ell \quad\text{and}\quad x \mid_P a_s.
\end{align*}
Notice that $a_{m+1} \leq_P a_\ell \leq_P a_s$ because $m+1 \leq \ell \leq s$ and $A$ is an ascending sequence.  The element $p_m$ is comparable either with $x$ or with $a_s$ because $x \mid_P a_s$ and $P$ has width $\leq\! 2$.  Also, $x \nleq_P p_m$ because otherwise we would have the contradiction $a_\ell \leq_P x \leq_P p_m$.  Similarly, $a_s \nleq_P p_m$ because otherwise we would have the contradiction $a_\ell \leq_P a_s \leq_P p_m$.  Therefore either $x >_P p_m$ or $a_s >_P p_m$.

If $x >_P p_m$, then $x$ satisfies $x >_P p_m$, $x \geq_P a_\ell \geq_P a_{m+1}$, and $x \mid_P a_s$.  That is, there are an $n > m+1$ and a $p_{m+1} \in P$ with $p_{m+1} >_P p_m$, $p_{m+1} \geq_P a_{m+1}$, and $p_{m+1} \mid_P a_n$.  So $\findlad^{\vec{P}}(A)(m+1)$ is defined.

Otherwise $a_s >_P p_m$.  In this case, again by assumption there are a $t > s$ and a $y \in P$ with
\begin{align*}
y \geq_P a_s \quad\text{and}\quad y \mid_P a_t.
\end{align*}
This $y$ satisfies $y \geq_P a_s >_P p_m$, $y \geq_P a_s \geq_P a_{m+1}$, and $y \mid_P a_t$.  That is, there are an $n > m+1$ and a $p_{m+1} \in P$ with $p_{m+1} >_P p_m$, $p_{m+1} \geq_P a_{m+1}$, and $p_{m+1} \mid_P a_n$.  So $\findlad^{\vec{P}}(A)(m+1)$ is defined.
\end{proof}

\begin{Theorem}\label{thm-cofRSpo2InADSISig2}
$\rca + \isig{2} + \ads \vdash \cofrspo_2$.
\end{Theorem}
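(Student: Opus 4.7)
The plan is to adapt the proof of Theorem~\ref{thm-ForwardRSpo}, replacing Lemma~\ref{lem-LadderFinder} with its width-$\leq 2$ analogue Lemma~\ref{lem-W2LadderFinder} so that the width-$\leq 2$ hypothesis is exploited in place of a chain decomposition, and aiming at a $(0,\infty)$-homogeneous \emph{ascending or descending} sequence. The reason this suffices is that for any infinite ascending sequence $A = \la a_n : n \in \Nb \ra$ in a partial order and any $p \in P$, the set $\{n : p \geq_P a_n\}$ is an initial segment of $\Nb$ while $\{n : p \leq_P a_n\}$ is a final segment, so the comparability set $\{n : p \comp_P a_n\}$ is always either finite or cofinite; hence any $(0,\infty)$-homogeneous ascending (or, dually, descending) sequence is automatically $(0,\cof)$-homogeneous.

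Concretely, I would apply $\cc_2$ (provable in $\rca$ by Proposition~\ref{prop-CC-CA}) and then $\ads$ to extract an infinite ascending or descending sequence $A_0$ in $P$, assuming ascending after a possible reversal. Mimicking the tree construction in the proof of Theorem~\ref{thm-ForwardRSpo}, I would use the recursive function $f$ from Lemma~\ref{lem-W2LadderFinder} to iteratively assign machines $e(\sigma)$ computing attempted ascending sequences to the nodes $\sigma$ of a rooted tree, and form the subtree $T$ of nodes whose machines are total using bounded $\Pi^0_2$ comprehension (an instance of $\isig{2}$). When some branch of $T$ terminates before reaching maximal depth, the contrapositive of Lemma~\ref{lem-W2LadderFinder} item~\ref{it-W2IsTot} guarantees that a tail of the corresponding ascending sequence is $(0,\infty)$-homogeneous, and the ascending-sequence observation above then yields $(0,\cof)$-homogeneity.

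The main obstacle---and the point where the $(0,\cof)$-variant differs substantively from the $(0,\infty)$-version proved in Theorem~\ref{thm-ForwardRSpo}---is the terminal case in which all ladders up to maximal depth succeed. There the incomparability condition from Lemma~\ref{lem-W2LadderFinder} item~\ref{it-W2TotLadder} must be combined with width $\leq 2$: whenever $A_{i+1}$ is a successful ladder for $A_i$, the pair $\{A_{i+1}(m), A_i(n)\}$ witnessing incomparability is a maximum antichain, so every $p \in P$ is comparable with at least one of its members. This, together with the finite-or-cofinite dichotomy for ascending sequences, should produce a case analysis forcing every element of $P$ into the $(0,\cof)$-homogeneous alternative for some single ascending (or, by dualising, descending) sequence extracted from the terminal level of the tree. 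The delicate task is to ensure that the extracted chain is of order-type $\omega$ rather than the $\omega + \omega$-type chain $\wh{X}_{\geq m} \cup \wh{Y}_{\geq n}$ that arises in Theorem~\ref{thm-ForwardRSpo}, because the $(0,\infty)$-to-$(0,\cof)$ upgrade fails for such chains; the incomparability clause of Lemma~\ref{lem-W2LadderFinder} together with width $\leq 2$ is the crucial ingredient that must be used to collapse the would-be second block into the first.
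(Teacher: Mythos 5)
Your overall strategy is the right one, and your first half matches the paper's proof: start with $\cc_2$ and $\ads$ to get an ascending sequence, iterate the width-$2$ ladder finder of Lemma~\ref{lem-W2LadderFinder}, and use the observation that an ascending $(0,\infty)$-homogeneous sequence is automatically $(0,\cof)$-homogeneous to close out the case where some ladder fails to be total. But the case you yourself flag as the main obstacle --- ``all ladders up to maximal depth succeed'' --- is exactly the part you have not carried out, and the finite-tree framing you propose points in a direction that does not work. Since Lemma~\ref{lem-W2LadderFinder} produces a single function $f \colon \Nb \imp \Nb$ with no chain index, the ``tree'' degenerates to a path, and there is no principled finite depth $k$ at which to stop: if you truncate at any finite level and all machines up to that level are total, you have learned nothing, and the $\omega+\omega$ assembly from the terminal case of Theorem~\ref{thm-ForwardRSpo} is unavailable both because it only yields $(0,\infty)$-homogeneity and because its key step (every $p \in C_{\ell(\sigma)}$ is comparable with everything in $B$) leans on the chain decomposition you no longer have.

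The paper instead iterates \emph{infinitely}: set $e_0$ to compute $A$ and $e_{i+1} = f(e_i)$ for all $i$. If some $\Phi_{e_i}^{\vec{P}\oplus A}$ is partial, the $\Sigma^0_2$ least element principle (the actual use of $\isig{2}$ here, rather than bounded $\Pi^0_2$ comprehension on a finite tree) picks out the least such $i$, and your first-case argument applies to $\Phi_{e_{i-1}}^{\vec{P}\oplus A}$. If every $\Phi_{e_i}^{\vec{P}\oplus A}$ is total, writing $a^i_n = \Phi_{e_i}^{\vec{P}\oplus A}(n)$, one takes the \emph{diagonal} sequence $b_i = a^i_i$, which is ascending because each $A_{i+1}$ is a ladder for $A_i$. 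Its $(0,\cof)$-homogeneity is then verified using precisely the two ingredients you named but did not deploy: given $p$ with $p \nleq_P b_i$ for all $i$, the incomparability clause of Lemma~\ref{lem-W2LadderFinder} item~\ref{it-W2TotLadder} supplies an $n > i+1$ with $a^{i+1}_{i+1} \mid_P a^i_n$, width $\leq 2$ forces $p$ to be comparable with one of this incomparable pair, and $p \leq_P a^i_n$ is ruled out since $a^i_n \leq_P a^n_n = b_n$; hence $p \geq_P b_i$ for every $i$. Without the infinite iteration and the diagonal, your sketch of the terminal case remains a genuine gap.
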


\begin{proof}
Let $(P, <_P)$ be an infinite partial order of width $\leq\! 2$.  $\rca \vdash \cc_2$ by Proposition~\ref{prop-CC-CA}, so we may apply $\cc_2$ to $P$ to obtain an infinite chain $C$.  Apply $\ads$ to $C$ to obtain either an infinite ascending sequence or an infinite descending sequence in $C$.  By reversing $P$ if necessary, we may assume that $P$ contains an infinite ascending sequence $A = \la a_n : n \in \Nb \ra$.

Let $\vec{P} = P \oplus {<_P}$, and let $\findlad^{\vec{P}} \colon P^\Nb \imp P^\Nb$ be the Turing functional relative to $\vec{P}$ from Lemma~\ref{lem-W2LadderFinder}.  Let $\findlad^{\vec{P}, i}$ denote the $i$\textsuperscript{th} iteration of $\findlad^{\vec{P}, i}$ given by
\begin{align*}
\findlad^{\vec{P}, 0}(A) &= A\\
\findlad^{\vec{P}, i+1}(A) &= \findlad^{\vec{P}}(\findlad^{\vec{P}, i}(A)).
\end{align*}
There are two cases:  either $\findlad^{\vec{P}, i}(A)$ is partial for some $i$, or $\findlad^{\vec{P}, i}(A)$ is total for all $i$.

First suppose that $\findlad^{\vec{P}, i}(A)$ is partial for some $i$.  Then, by $\isig{2}$ in the form of the $\Sigma^0_2$ least element principle, there is a least $i$ such that $\findlad^{\vec{P}, i}(A)$ is partial.  As $\findlad^{\vec{P}, 0}(A)$ is total, it must be that $i > 0$ and that $\findlad^{\vec{P}, i-1}(A)$ is total.  Then $\findlad^{\vec{P}, i-1}(A)$ computes an infinite ascending sequence in $P$ because either $i-1 = 0$, in which case $\findlad^{\vec{P}, i-1}(A)$ is $A$; or $i-1 > 0$, in which case $\findlad^{\vec{P}, i-1}(A) = \findlad^{\vec{P}}(\findlad^{\vec{P}, i-2}(A))$ computes an infinite ascending sequence by Lemma~\ref{lem-W2LadderFinder} item~\ref{it-W2TotLadder}.  Let $B = \la b_n : n \in \Nb \ra$ denote the infinite ascending sequence computed by $\findlad^{\vec{P}, i-1}(A)$, where $b_n = \findlad^{\vec{P}, i-1}(A)(n)$ for each $n$.  We claim that some tail of $B$ is $(0,\infty)$-homogeneous.  If not, then every tail of $B$ has a counterexample, which implies that
\begin{align*}
\forall m \; \exists n > m \; \exists p \in P \; \bigl(p \geq_P b_m \;\andd\; p \mid_P b_n\bigr).
\end{align*}
Therefore
\begin{align*}
\findlad^{\vec{P}, i}(A) = \findlad^{\vec{P}}(\findlad^{\vec{P}, i-1}(A)) = \findlad^{\vec{P}}(B)
\end{align*}
is total by Lemma~\ref{lem-W2LadderFinder} item~\ref{it-W2IsTot}.  This contradicts that $\findlad^{\vec{P}, i}(A)$ is partial.  Therefore some tail $B_{\geq n}$ of $B$ is $(0,\infty)$-homogeneous.  Ascending $(0,\infty)$-homogeneous sequences are necessarily $(0,\cof)$-homogeneous, and infinite subsets of $(0,\cof)$-homogeneous chains are necessarily $(0,\cof)$-homogeneous as well.  Thus we may thin $B_{\geq n}$ to an infinite sequence whose range exists as a set and thereby obtain a $(0,\cof)$-homogeneous chain for $P$.

Now suppose that $\findlad^{\vec{P}, i}(A)$ is total for all $i$.  Then the sequence $(\findlad^{\vec{P}, i}(A) : i \in \Nb)$ is uniformly computable.  Let $A_i = \la a^i_n : n \in \Nb \ra$ denote the infinite sequence computed by $\findlad^{\vec{P}, i}(A)$, where $a^i_n = \findlad^{\vec{P}, i}(A)(n)$ for all $i$ and $n$.  For every $i$, $A_i$ is an infinite ascending sequence in $P$ and $A_{i+1}$ is a ladder for $A_i$ by the fact that $A_0 = A$, by the assumption that every $\findlad^{\vec{P}, i}(A)$ is total, and by Lemma~\ref{lem-W2LadderFinder} item~\ref{it-W2TotLadder}.  We have that for every $i_0, i_1, n_0, n_1$, if $i_0 \leq i_1$ and $n_0 \leq n_1$, then $a^{i_0}_{n_0} \leq_P a^{i_1}_{n_1}$.  This is because 
\begin{align*}
a^{i_0}_{n_0} \leq_P a^{i_0}_{n_1} \leq_P a^{i_0 + 1}_{n_1} \leq_P a^{i_0 + 2}_{n_1} \leq_P \cdots \leq_P a^{i_1}_{n_1}.
\end{align*}
The inequality $a^{i_0}_{n_0} \leq_P a^{i_0}_{n_1}$ is because $A_{i_0}$ is an ascending sequence.  The inequalities $a^{i_0}_{n_1} \leq_P a^{i_0 + 1}_{n_1} \leq_P \cdots \leq_P a^{i_1}_{n_1}$ are because $A_{i+1}$ is a ladder for $A_i$ for each $i$.  Additionally, if $n_0 < n_1$, then the inequality $a^{i_0}_{n_0} <_P a^{i_0}_{n_1}$ is strict and therefore the inequality $a^{i_0}_{n_0} <_P a^{i_1}_{n_1}$ is strict as well.

Define an infinite sequence $B = \la b_i : i \in \Nb \ra$ by $b_i = a^i_i$ for each $i$.  The sequence $B$ is ascending in $P$ because $a_i^i <_P a_{i+1}^{i+1}$ for each $i$.  The ascending sequence $B$ is also $(0,\cof)$-homogeneous.  To see this, consider a $p \in P$ such that $p \nleq_P b_i$ for all $i$.  We show that $p \geq_P b_i$ for all $i$.  Given $i$, there is an $n > i+1$ with $a^{i+1}_{i+1} \mid_P a^i_n$ by Lemma~\ref{lem-W2LadderFinder} item~\ref{it-W2TotLadder}.  The element $p$ is comparable either with $b_{i+1} = a^{i+1}_{i+1}$ or with $a^i_n$ because $P$ has width $\leq 2$.  We have that $p \nleq_P b_{i+1}$ by assumption and that $p \nleq_P a^i_n$ because $p \leq_P a^i_n$ yields the contradiction $p \leq_P a^i_n \leq_P a^n_n = b_n$.  Thus either $p \geq_P b_{i+1} \geq_P b_i$ or $p \geq_P a^i_n \geq_P a^i_i = b_i$.  Therefore $p \geq_P b_i$, as desired.  Thus every $p \in P$ is either below some element of $B$, in which case it is below almost every element of $B$ because $B$ is an ascending sequence; or is above all elements of $B$.  So $B$ is a $(0,\cof)$-homogeneous ascending sequence.  As above, we may thin $B$ to an infinite sequence whose range exists as a set and thereby obtain a $(0,\cof)$-homogeneous chain for $P$.
\end{proof}

Unfortunately, the method of Theorem~\ref{thm-cofRSpo2InADSISig2} does not appear to readily generalize even to $3$-chain decomposable partial orders.

\section{Reversals and equivalences}\label{sec-reverse}

We supply the following reversals.
\begin{itemize}
\item $\rca + \rspocd_3 \vdash \ads$ (Lemma~\ref{lem-RSpoCD3ImpADS}).

\medskip

\item $\rca + \rspocd_2 \vdash \sads$ (Lemma~\ref{lem-RSpoCD2ImpSADS}).

\medskip

\item $\rca + \rspocd_{<\infty} \vdash \isig{2}$ (Lemma~\ref{lem-RSpoCDImpISig2}).

\medskip

\item $\rca + \cofrspocd_2 \vdash \ads$ (Lemma~\ref{lem-cofRSpoCD2ImpADS}).
\end{itemize}

Theorem~\ref{thm-RSpoEquivs} then combines these reversals with the results of the previous section in order to characterize the axiomatic strength of several versions of the Rival--Sands theorem for partial orders.  We also present a few questions.

\begin{Lemma}\label{lem-RSpoCD3ImpADS}
$\rca + \rspocd_3 \vdash \ads$.
\end{Lemma}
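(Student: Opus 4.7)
The plan is to associate to an arbitrary infinite linear order $(L, <_L)$ on domain $\Nb$ an infinite partial order $P = P(L)$ that is explicitly $3$-chain decomposable (with witnessing chains $C_0, C_1, C_2$ constructed in $\rca$) and whose $(0,\infty)$-homogeneous chains can be decoded in $\rca$ to infinite $<_L$-monotone subsequences of $\Nb$. Applying $\rspocd_3$ to $P$ will then yield $\ads$ for $L$.

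Concretely, I would take the universe of $P$ to be three disjoint copies of $\Nb$, with $C_i = \Nb \times \{i\}$ ordered internally by $<_\Nb$ so that each $C_i$ is automatically a chain in $P$. The cross-chain comparabilities between $C_0$, $C_1$, and $C_2$ would encode the information about $<_L$: they are arranged so that $(n, i)$ and $(m, j)$ with $i \neq j$ are comparable exactly when the pair $\{n,m\}$ sits in a specified ``monotone position'' with respect to $<_L$. Having three chains, rather than two, provides enough room to simultaneously reflect both possible monotone patterns, so that the condition of being $(0,\infty)$-homogeneous forces a genuinely monotone trace.

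Granting such a construction, I would apply $\rspocd_3$ to $P$ to obtain a $(0,\infty)$-homogeneous chain $H$. Since $P = C_0 \cup C_1 \cup C_2$ and $3$ is a fixed standard number, $\rca$ (without invoking $\bsig{2}$) gives some $i$ with $H \cap C_i$ infinite. The $(0,\infty)$-homogeneity of $H$ relative to the two other chains would then translate, via the chosen cross-comparability pattern, into a monotonicity condition on the $\Nb$-indices of $H \cap C_i$, which $\rca$ can thin to an honest infinite $<_L$-ascending or $<_L$-descending subsequence of $\Nb$.

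The principal obstacle is designing the cross-chain comparabilities so that simultaneously (i) transitivity is preserved and the $C_i$ remain chains, (ii) the decomposition $P = C_0 \cup C_1 \cup C_2$ really is a chain decomposition rather than merely witnessing width $\leq 3$, and (iii) the $(0,\infty)$-homogeneous condition is strong enough to force monotonicity rather than being satisfied by ``degenerate'' chains that do not decode to any monotone sequence. The need for three chains (rather than two) is expected: with only two chains one can reverse $\rspocd_2$ only as far as $\sads$, cf.\ Lemma~\ref{lem-RSpoCD2ImpSADS} below, and the third chain is what allows arbitrary (not necessarily stable) linear orders to be handled uniformly.
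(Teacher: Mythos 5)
There is a genuine gap: your proposal never actually produces the partial order $P(L)$. Everything after ``Granting such a construction'' is conditional on a construction you explicitly defer, and the three conditions you list at the end (transitivity, genuine chain decomposability, and non-degeneracy of the homogeneous chains) are precisely the content of the proof, not a residual obstacle to be dispatched later. Worse, the one concrete commitment you do make --- ordering each copy $C_i = \Nb \times \{i\}$ internally by $<_\Nb$ --- works against you: transitivity forces the set of elements of $C_j$ lying above a fixed element of $C_i$ to be upward closed under $<_\Nb$, so the cross-chain comparabilities can only record $<_\Nb$-cuts in each copy and cannot encode an arbitrary ``monotone position'' with respect to $<_L$.

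The paper resolves all three of your obstacles at once by taking $P$ to be the product $L \times Q$, where $Q = \{a,b,z\}$ is the three-element poset with $a, b <_Q z$ and $a \mid_Q b$. Each chain $C_q = L \times \{q\}$ is then a copy of $L$ ordered by $<_L$ (not by $<_\Nb$), transitivity is automatic for products, and the decomposition is genuine. The decoding is also different from what you envision: rather than reading monotonicity off the indices of an infinite $H \cap C_i$, one uses elements \emph{outside} the homogeneous chain $C$ as probes. Since $a \mid_Q b$, the chain $C$ meets at most one of $C_a$, $C_b$, say $C \subseteq C_a \cup C_z$; if $\la m, z \ra$ were the maximum of $C \cap C_z$, then $\la m, b \ra$ would be comparable with exactly one element of $C$, contradicting $(0,\infty)$-homogeneity, so $C \cap C_z$ has no maximum and, if nonempty, yields an ascending sequence in $L$; if instead $C \cap C_z = \emptyset$, the probe $\la m, z \ra$ similarly shows that $C \subseteq C_a$ has no minimum, yielding a descending sequence. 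If you want to salvage your outline, replacing your internal $<_\Nb$-orderings by copies of $L$ and your unspecified cross-comparabilities by the product order is essentially forced.
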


\begin{proof}
Let $(L, <_L)$ be an infinite linear order.  Let $(Q, <_Q)$ be the three-element partial order $Q = \{a, b, z\}$ with $a, b <_Q z$ and $a \mid_Q b$.  Consider the product partial order $(P, <_P)$, where $P = L \times Q$ and $\la \ell_0, q_0 \ra \leq_P \la \ell_1, q_1 \ra$ if and only if $\ell_0 \leq_L \ell_1$ and $q_0 \leq_Q q_1$.  The partial order $P$ has the $3$-chain decomposition $C_a = L \times \{a\}$, $C_b = L \times \{b\}$, $C_z = L \times \{z\}$.  Thus by $\rspocd_3$, let $C$ be a $(0,\infty)$-homogeneous chain for $P$.

Notice that $C$ cannot intersect both $C_a$ and $C_b$ because $\la \ell_0, a \ra \mid_P \la \ell_1, b \ra$ for every $\ell_0, \ell_1 \in L$.  Thus either $C \subseteq C_a \cup C_z$ or $C \subseteq C_b \cup C_z$.  Assume for the sake of argument that $C \subseteq C_a \cup C_z$.  The $C \subseteq C_b \cup C_z$ case is symmetric.

We claim that $C \cap C_z$ has no maximum element.  Suppose for a contradiction that $\la m, z \ra$ is the maximum element of $C \cap C_z$, and consider the element $\la m, b \ra$.  Then $\la m, b \ra <_Q \la m, z \ra$.  However, every other element of $C$ is either of the form $\la \ell, a \ra$, in which case $\la m, b \ra \mid_P \la \ell, a \ra$ because $b \mid_Q a$; or of the form $\la \ell, z \ra$ with $\ell <_L m$, in which case $\la m, b \ra \mid_P \la \ell, z \ra$ because $m >_L \ell$ and $b <_Q z$.  Thus $\la m, b \ra$ is comparable with exactly one element of $C$, contradicting that $C$ is $(0,\infty)$-homogeneous.

If $C \cap C_z \neq \emptyset$, then $C \cap C_z$ is non-empty and has no maximum element, so we can define an infinite ascending sequence $\la \ell_0, z \ra <_P \la \ell_1, z \ra <_P \la \ell_2, z \ra <_P \cdots$ in $C \cap C_z$.  This yields an infinite ascending sequence $\ell_0 <_L \ell_1 <_L \ell_2 <_L \cdots$ in $L$.

If $C \cap C_z = \emptyset$, then $C \subseteq C_a$.  In this case, we claim that $C$ has no minimum element.  Suppose for a contradiction that $\la m, a \ra$ is the minimum element of $C$, and consider the element $\la m, z \ra$.  Then $\la m, a \ra <_P \la m, z \ra$.  However, every other element of $C$ is of the form $\la \ell, a \ra$ with $m <_L \ell$, in which case $\la m, z \ra \mid_P \la \ell, a \ra$ because $m <_L \ell$ and $z >_Q a$.  Thus $\la m, z \ra$ is comparable with exactly one element of $C$, contradicting that $C$ is $(0,\infty)$-homogeneous.  We may now define an infinite descending sequence $\la \ell_0, a \ra >_P \la \ell_1, a \ra >_P \la \ell_2, a \ra >_P \cdots$ in $C$ and hence an infinite descending sequence $\ell_0 >_L \ell_1 >_L \ell_2 >_L \cdots$ in $L$.

Thus $L$ has either an infinite ascending sequence or an infinite descending sequence.
\end{proof}

\begin{Lemma}\label{lem-RSpoCD2ImpSADS}
$\rca + \rspocd_2 \vdash \sads$.
\end{Lemma}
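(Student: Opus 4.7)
Let $(L, <_L)$ be an infinite stable linear order; the plan is to construct a $2$-chain decomposable partial order $(P, <_P)$ from $L$, apply $\rspocd_2$ to obtain a $(0,\infty)$-homogeneous chain $C$ in $P$, and extract from $C$ an infinite ascending or descending sequence in $L$.

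Specifically, I would take $P = L \times \{a,b\}$ with the product order arising from the two-element chain $a <_Q b$: namely, $\la n, x \ra \leq_P \la m, y \ra$ if and only if $n \leq_L m$ and $x \leq_Q y$.  Then $C_a = L \times \{a\}$ and $C_b = L \times \{b\}$ give a $2$-chain decomposition of $P$, and $\la n, a \ra \mid_P \la m, b \ra$ precisely when $n >_L m$, so $P$ has width $2$.  Applying $\rspocd_2$ produces an infinite $(0,\infty)$-homogeneous chain $C \subseteq P$; set $S_a = \{n \in L : \la n, a \ra \in C\}$ and $S_b = \{n \in L : \la n, b \ra \in C\}$.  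The chain condition on $C$ together with the cross-comparability rule forces $S_a \leq_L S_b$, meaning every element of $S_a$ is $\leq_L$ every element of $S_b$, and at least one of $S_a, S_b$ is infinite since $C$ is.

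The analysis then splits into cases.  If $S_a$ is infinite and $S_b$ is finite, then for every $n \in L$ the element $\la n, b \ra$ is comparable in $P$ with $\la m, a \ra \in C$ exactly when $m \leq_L n$, so $(0,\infty)$-homogeneity gives $|\{m \in S_a : m \leq_L n\}| + |S_b| \in \{0,\infty\}$; taking $n \in S_a$ shows this quantity is positive and hence infinite, so $S_a$ has no $L$-minimum and an infinite $L$-descending sequence is extracted from $S_a$ by primitive recursion.  The case $S_a$ finite and $S_b$ infinite is symmetric and yields an ascending sequence via $\la n, a \ra$-homogeneity.

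The hard case is when both $S_a$ and $S_b$ are infinite: here $(0,\infty)$-homogeneity is automatically satisfied by every element of $P$, so it provides no direct constraint, and the argument must invoke stability of $L$ essentially.  For any $n \in S_a$, since $S_b \subseteq \{m \in L : m \geq_L n\}$ is infinite, $n$ has infinitely many $L$-successors, so by stability $n$ has only finitely many $L$-predecessors.  Consequently each $n \in S_a$ has only finitely many $L$-predecessors in $S_a$ and infinitely many $L$-successors in $S_a$, which supports a primitive recursion producing an infinite ascending sequence $s_0 <_L s_1 <_L \cdots$ in $S_a$.  The main obstacle is precisely this use of stability in the both-infinite case: without it the argument must collapse, since $\rspocd_2$ presumably does not prove full $\ads$, so the formalization in $\rca$ has to be arranged so that stability (and the standard $\Sigma^0_1$ recursion of $\rca$) suffice to build the sequence in this case, while the first two cases are handled by $(0,\infty)$-homogeneity alone.
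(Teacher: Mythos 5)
Your proposal is correct and uses the same construction as the paper's proof: the product of $L$ with a two-element chain, decomposed into its two levels, followed by an application of $\rspocd_2$ and an extraction argument based on which elements of the product are comparable with the resulting homogeneous chain. The paper organizes the extraction slightly differently---it proves a single claim that the intersection of $C$ with one level cannot contain both an element with finitely many $<_L$-predecessors and an element with finitely many $<_L$-successors, and then splits on which type occurs---whereas you split on which of $S_a$, $S_b$ is infinite and invoke stability only in the both-infinite case; but the underlying comparability facts and the use of stability are the same, so the two arguments are essentially identical.
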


\begin{proof}
Let $(L, <_L)$ be an infinite stable linear order.  Let $(Q, <_Q)$ be the two-element linear order $Q = \{a, z\}$ with $a <_Q z$, and let $(P, <_P)$ be the product partial order $L \times Q$.  The partial order $P$ has the $2$-chain decomposition $C_a = L \times \{a\}$, $C_z = L \times \{z\}$.  Thus by $\rspocd_2$, let $C$ be a $(0,\infty)$-homogeneous chain for $P$.

The linear order $L$ is stable, so every $\ell \in L$ has either finitely many $<_L$-predecessors or finitely many $<_L$-successors.  We claim that $C \cap C_a$ cannot contain two elements $\la \ell, a \ra$ and $\la r, a \ra$ where $\ell$ has only finitely many $<_L$-predecessors and $r$ has only finitely many $<_L$-successors.  Suppose for a contradiction that $C \cap C_a$ does contain such an $\la \ell, a \ra$ and $\la r, a \ra$, and consider the element $\la \ell, z \ra$.  The element $\la \ell, z \ra$ is comparable with an $\la x, a \ra \in C_a$ if and only if $x \leq_L \ell$, and there are only finitely many such elements $x \in L$.  Thus $\la \ell, z \ra$ is comparable with only finitely many elements of $C \cap C_a$.  On the other hand, the element $\la r, a \ra$ is comparable with an $\la x, z \ra \in C_z$ if and only if $r \leq_L x$, and there are only finitely many such elements $x \in L$.  Thus $C \cap C_z$ is finite because all of its elements are comparable with $\la r, a \ra$.  It follows that $\la \ell, z \ra$ is comparable with $\la \ell, a \ra \in C$ and is comparable with only finitely many elements of $C$ in total.  This contradicts that $C$ is $(0,\infty)$-homogeneous for $P$.  Symmetric reasoning shows that $C \cap C_z$ also cannot contain two elements $\la \ell, z \ra$ and $\la r, z \ra$ where $\ell$ has only finitely many $<_L$-predecessors and $r$ has only finitely many $<_L$-successors.

The chain $C$ is infinite, so either $C \cap C_a$ is infinite or $C \cap C_z$ is infinite.  Suppose for the sake of argument that $C \cap C_a$ is infinite.  The other case is symmetric.  By the claim above, it must be that either $\ell$ has only finitely many $<_L$-predecessors whenever $\la \ell, a \ra \in C \cap C_a$ or that $\ell$ has only finitely many $<_L$-successors whenever $\la \ell, a \ra \in C \cap C_a$.  Suppose that $\ell$ has only finitely many $<_L$-predecessors whenever $\la \ell, a \ra \in C \cap C_a$.  Then for every $\la \ell, a \ra \in C \cap C_a$, there is an $\la r, a \ra \in C \cap C_a$ with $\la \ell, a \ra <_P \la r, a \ra$.  We may thus define an infinite ascending sequence $\la \ell_0, a \ra <_P \la \ell_1, a \ra <_P \la \ell_2, a \ra <_P \cdots$ in $C \cap C_a$ and hence an infinite ascending sequence $\ell_0 <_L \ell_1 <_L \ell_2 <_L \cdots$ in $L$.  Similarly, if $\ell$ has only finitely many $<_L$-successors whenever $\la \ell, a \ra \in C \cap C_a$, then for every $\la \ell, a \ra \in C \cap C_a$, there is an $\la r, a \ra \in C \cap C_a$ with $\la r, a \ra <_P \la \ell, a \ra$.  We may thus define an infinite descending sequence $\la \ell_0, a \ra >_P \la \ell_1, a \ra >_P \la \ell_2, a \ra >_P \cdots$ in $C \cap C_a$ and hence an infinite descending sequence $\ell_0 >_L \ell_1 >_L \ell_2 >_L \cdots$ in $L$.  Thus $L$ has either an infinite ascending sequence or an infinite descending sequence.
\end{proof}

\begin{Lemma}\label{lem-RSpoCDImpISig2}
$\rca + \rspocd_{<\infty} \vdash \isig{2}$.
\end{Lemma}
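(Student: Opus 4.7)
The strategy is to derive $\isig{2}$ from $\rspocd_{<\infty}$ by proving bounded $\Pi^0_2$ comprehension. By Lemma~\ref{lem-RSpoCD3ImpADS}, $\rspocd_{<\infty}$ already implies $\ads$, hence $\sads$ and $\bsig{2}$, so I may reason freely in $\rca + \bsig{2}$ and need only extract the strength that separates $\isig{2}$ from $\bsig{2}$.

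Given a $\Pi^0_2$ formula $\psi(n) = \forall s \, \exists t \, \theta(n,s,t)$ and a bound $b$, the plan is to build a partial order $P = P_{\psi,b}$, together with an explicit $k$-chain decomposition in which $k$ is primitive recursive in $b$, such that any $(0,\infty)$-homogeneous chain for $P$ computes the set $\{n < b : \psi(n)\}$. The construction should mirror, in reverse, the proof of Theorem~\ref{thm-ForwardRSpo}: there, $\isig{2}$ was used to form the bounded tree $T$ of machines via a bounded $\Pi^0_2$ totality condition, so here chains in a concrete partial order will play the role of the machines, and the $(0,\infty)$-homogeneity requirement will be set up so that any such chain $C$ must correctly commit, for each $n < b$, to the totality question determining $\psi(n)$.

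Concretely, for each $n < b$ I would attach to a common backbone a widget consisting of two parallel ascending chains---one ``guessing'' $\psi(n)$ and one guessing $\neg\psi(n)$---with comparability edges built uniformly from the $\Sigma^0_0$ predicate $\theta$ and its stage-by-stage witnesses. The edges are arranged so that any incorrect guess produces an element of $P$ comparable with exactly one (hence finitely many but not zero) element of any candidate homogeneous chain, violating $(0,\infty)$-homogeneity. Applying $\rspocd_{<\infty}$ to $P$ then yields a chain $C$ from which $\{n < b : \psi(n)\}$ can be read off by a $\Delta^0_1$-in-$C$ computation composed with bounded quantification over $n < b$, using $\bsig{2}$ to collect the resulting witnesses.

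The main obstacle is the widget design: the comparabilities must be tight enough that every incorrect guess is refuted by at least one explicit counterexample, yet loose enough that the correct guess is not accidentally disqualified through interaction with some other widget. Keeping the number of chains bounded by a primitive-recursive function of $b$ while ensuring the verification proceeds in $\rca + \bsig{2}$---in particular, without appealing to the very $\isig{2}$ principle we are trying to derive---is the delicate part. I expect that a clean realization will take $P$ as the product of a finite decision gadget, whose elements enumerate the potential guesses for all $n < b$, with a shared ambient chain whose role is to enforce global consistency among these guesses via $\theta$-controlled comparabilities.
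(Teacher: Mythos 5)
There is a genuine gap: your proof stops exactly where the mathematical content begins. Everything after ``the plan is to build a partial order $P_{\psi,b}$'' is a description of properties you would like a construction to have, not a construction. You yourself flag the widget design as ``the main obstacle'' and say you ``expect'' a clean realization exists; but that design \emph{is} the lemma. Moreover, the obstacle is real and not merely technical. A $(0,\infty)$-homogeneous chain $C$ is a single chain, so if your $b$ widgets are pairwise incomparable, $C$ meets at most one of them, and if they all hang off a shared backbone, then ``$C$ commits to the answer for $n$'' must be read off as something like ``$C$ meets the $\psi(n)$-guessing chain infinitely often,'' which is $\Pi^0_2$ in $C$, not $\Delta^0_1$ as you claim. (Note that $\{p : \exists c \in C\,(p \comp_P c)\}$ is only $\Sigma^0_1$ in $C$; homogeneity does not by itself make it $\Delta^0_1$.) Extracting $b$ independent bits from one chain is precisely the difficulty you have not resolved.

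The paper's proof sidesteps this entirely by targeting the $\Pi^0_2$ \emph{least element principle} (also equivalent to $\isig{2}$ over $\rca$) rather than bounded $\Pi^0_2$ comprehension: the homogeneous chain then only needs to encode a single number, namely the least $i \leq n$ with $\forall x\,\exists y\,\varphi(i,x,y)$, and it does so by its \emph{location}. Concretely, $P$ consists of triples $\la i,s,t\ra$ with $i \leq n$ and $t$ least such that $\forall x \leq s\;\exists y \leq t\;\varphi(i,x,y)$, ordered by $\la i_0,s_0,t_0\ra \leq_P \la i_1,s_1,t_1\ra$ iff $i_1 \leq i_0$ and $s_1 \geq s_0$; the chains $C_i$ (fixed $i$) give an $(n+1)$-chain decomposition, $C_i$ is infinite exactly when $\forall x\,\exists y\,\varphi(i,x,y)$, and by $\rt^1_{<\infty}$ a $(0,\infty)$-homogeneous chain meets some $C_i$ infinitely. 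The order is rigged so that if some $j<i$ also satisfied the $\Pi^0_2$ formula, an element $\la j,s_0,t_1\ra$ would be comparable with one but only finitely many elements of the homogeneous chain, a contradiction; hence $i$ is least. If you want to salvage your approach, I would recommend switching your target to the least element principle and adopting a construction of this ``location encodes the answer'' type; as written, your argument cannot be checked because the partial order on which everything depends is never defined.
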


\begin{proof}
We show that $\rca + \rspocd_{<\infty}$ proves the $\Pi^0_2$ least element principle, which is equivalent to $\isig{2}$ over $\rca$ as explained in Section~\ref{sec-background}.  Notice that $\rca + \rspocd_{<\infty} \vdash \bsig{2}$ because $\rca + \rspocd_{<\infty} \vdash \ads$ by Lemma~\ref{lem-RSpoCD3ImpADS}, and $\rca + \ads \vdash \bsig{2}$ as explained in Section~\ref{sec-background}.  Thus we may make use of $\rt^1_{<\infty}$ in the following argument.

Let $\forall x \, \exists y \, \varphi(n, x, y)$ be a $\Pi^0_2$ formula, possibly with undisplayed parameters, where $\varphi$ is $\Sigma^0_0$.  Let $n$ be such that $\forall x \, \exists y \, \varphi(n, x, y)$.  We want to find the least $i$ such that $\forall x \, \exists y \, \varphi(i, x, y)$.  Define a partial order $(P, <_P)$ by
\begin{align*}
P = \Bigl\{\la i, s, t \ra : (i \leq n) \;\andd\; \bigl(\forall x \leq s \; \exists y \leq t \; \varphi(i,x,y)\bigr) \;\andd\; \bigl(\exists x \leq s \; \forall y < t \; \neg\varphi(i,x,y)\bigr)\Bigr\}
\end{align*}
and
\begin{align*}
\la i_0, s_0, t_0 \ra \leq_P \la i_1, s_1, t_1 \ra \quad\Biimp\quad i_1 \leq i_0 \;\andd\; s_1 \geq s_0.
\end{align*}
That is, $P$ consists of all triples $\la i, s, t \ra$ where $i \leq n$ and $t$ is least such that $\forall x \leq s \; \exists y \leq t \; \varphi(i,x,y)$.  Notice that given $i$ and $s$, there is at most one $t$ with $\la i, s, t \ra \in P$.

By assumption, $\forall x \, \exists y \, \varphi(n, x, y)$.  Thus given any $s$, we have that $\forall x \leq s \; \exists y \; \varphi(n, x, y)$, and therefore by $\bsig{0}$ there is a $t$ such that $\forall x \leq s \; \exists y \leq t \; \varphi(n, x, y)$.  Moreover, there is a least such $t$ by the $\Sigma^0_0$ least element principle.  This shows that for every $s$ there is a $t$ with $\la n, s, t \ra \in P$.  Therefore $P$ is infinite.  Furthermore, $P$ has the $(n+1)$-chain decomposition $C_0, \dots, C_n$, where $C_i = \{\la i, s, t \ra: \la i, s, t \ra \in P\}$ for each $i \leq n$.  By $\rspocd_{<\infty}$, let $C$ be a $(0,\infty)$-homogeneous chain for $P$.  By $\rt^1_{<\infty}$, there is an $i$ such that $C \cap C_i$ is infinite.  We show that $i$ is least such that $\forall x \, \exists y \, \varphi(i, x, y)$.

First, as $C \cap C_i$ is infinite, given any $x_0$ there are an $s$ and a $t$ with $\la i, s, t \ra \in P$ and $s \geq x_0$.  The fact that $\la i, s, t \ra \in P$ means that $\forall x \leq s \; \exists y \leq t \; \varphi(i, x, y)$.  Thus $\exists y \, \varphi(i, x_0, y)$ because $x_0 \leq s$.  Therefore $\forall x \, \exists y \, \varphi(i, x, y)$.

Second, if $j < i$, then $C \cap C_j = \emptyset$.  This is because for any $\la j, s_0, t_0 \ra \in P$, there is an $\la i, s_1, t_1 \ra \in C \cap C_i$ with $s_1 > s_0$ because $C \cap C_i$ is infinite.  Then $\la j, s_0, t_0 \ra \mid_P \la i, s_1, t_1 \ra$, so $\la j, s_0, t_0 \ra \notin C$ because $C$ is a chain.  Now suppose for a contradiction that there is a $j < i$ such that $\forall x \, \exists y \, \varphi(j, x, y)$.  Let $\la i, s_0, t_0 \ra$ be any element of $C \cap C_i$.  By the same argument as for $n$, the assumption $\forall x \, \exists y \, \varphi(j, x, y)$ implies that for every $s$ there is a $t$ with $\la j, s, t \ra \in P$.  Therefore there is a $t_1$ such that $\la j, s_0, t_1 \ra \in P$, and we have that $\la j, s_0, t_1 \ra >_P \la i, s_0, t_0 \ra$.  However, $C \subseteq \bigcup_{k = i}^n C_k$, and for $\la j, s_0, t_1 \ra$ to be comparable with some $\la k, s, t \ra \in \bigcup_{k = i}^n C_k$, it must be that $s \leq s_0$.  There are only finitely many $\la k, s, t \ra \in \bigcup_{k = i}^n C_k$ with $s \leq s_0$, so $\la j, s_0, t_1 \ra$ is comparable with only finitely many elements of $C$.  Thus $\la j, s_0, t_1 \ra$ is comparable with $\la i, s_0, t_0 \ra \in C$, but it is comparable with only finitely many elements of $C$ in total.  This contradicts that $C$ is $(0,\infty)$-homogeneous for $P$.  Therefore we cannot have that $\forall x \, \exists y \, \varphi(j, x, y)$, so $i$ is least such that $\forall x \, \exists y \, \varphi(i, x, y)$.
\end{proof}

\begin{Lemma}\label{lem-cofRSpoCD2ImpADS}
$\rca + \cofrspocd_2 \vdash \ads$.
\end{Lemma}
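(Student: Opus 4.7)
The plan is to reuse the product construction from Lemma~\ref{lem-RSpoCD2ImpSADS}, exploiting the strictly stronger $(0,\cof)$-homogeneity hypothesis to bypass the need for stability of $L$.

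Given an infinite linear order $(L, <_L)$, I form the product partial order $P = L \times Q$ where $Q = \{a, z\}$ with $a <_Q z$, together with the obvious $2$-chain decomposition $C_a = L \times \{a\}$, $C_z = L \times \{z\}$.  Applying $\cofrspocd_2$ yields a $(0,\cof)$-homogeneous chain $C \subseteq P$.  The plan is to show that whichever of $C \cap C_a$ and $C \cap C_z$ is infinite will directly produce a monotone sequence in $L$, with the $C_a$ case yielding a descending sequence and the $C_z$ case yielding an ascending sequence.

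The heart of the argument is the following observation.  Suppose $C \cap C_a$ is infinite, and set $S_a = \{\ell \in L : \la \ell, a \ra \in C\}$.  For each $\ell \in S_a$, the element $\la \ell, z \ra \in P$ lies strictly above $\la \ell, a \ra \in C$, so $\la \ell, z \ra$ is comparable with some element of $C$; by $(0,\cof)$-homogeneity, it is then comparable with cofinitely many elements of $C$.  Now $\la \ell, z \ra$ is comparable with every element of $C \cap C_z$ (since these lie in the chain $C_z$), while $\la \ell, z \ra$ is comparable with $\la x, a \ra$ precisely when $x \leq_L \ell$.  Hence only finitely many $\la x, a \ra \in C \cap C_a$ can satisfy $x >_L \ell$, i.e.\ each element of $S_a$ has only finitely many $<_L$-successors in $S_a$.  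Since $S_a$ is infinite, this easily yields an infinite descending sequence in $S_a \subseteq L$.  The case $C \cap C_z$ infinite is symmetric:  for each $r \in S_z := \{r \in L : \la r, z \ra \in C\}$, consider $\la r, a \ra$, which lies below $\la r, z \ra \in C$, and run the analogous counting to conclude that only finitely many $x \in S_z$ can satisfy $x <_L r$, producing an infinite ascending sequence in $L$.

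There is no real obstacle here; the only points requiring care are the comparability bookkeeping in $P$ and the routine extraction of a monotone subsequence in $\rca$ from a set all of whose elements have only finitely many successors (resp.\ predecessors).  In particular, no induction beyond what is available in $\rca$ is needed, so $\rca + \cofrspocd_2 \vdash \ads$ follows.
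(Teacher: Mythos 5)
Your proposal is correct and follows essentially the same route as the paper: the same product order $L \times \{a <_Q z\}$ with the same $2$-chain decomposition, and the same comparability observation that $\la \ell, z\ra$ is incomparable with exactly those $\la x, a\ra$ having $x >_L \ell$. The only (immaterial) difference is that the paper packages the conclusion as ``$C \cap C_a$ has no minimum element'' via a contradiction with one putative minimum, whereas you show directly that every element of $S_a$ has only finitely many $<_L$-successors in $S_a$; both yield the descending sequence in $\rca$.
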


\begin{proof}
Let $(L, <_L)$ be an infinite linear order.  As in the proof of Lemma~\ref{lem-RSpoCD2ImpSADS}, let $(Q, <_Q)$ be the two-element linear order $Q = \{a, z\}$ with $a <_Q z$, and let $(P, <_P)$ be the product partial order $L \times Q$.  The partial order $P$ has the $2$-chain decomposition $C_a = L \times \{a\}$, $C_z = L \times \{z\}$.  Thus by $\cofrspocd_2$, let $C$ be a $(0,\cof)$-homogeneous chain for $P$.

The chain $C$ is infinite, so either $C \cap C_a$ is infinite or $C \cap C_z$ is infinite.  First suppose that $C \cap C_a$ is infinite.  Then $C \cap C_a$ has no minimum element.  Suppose for a contradiction that $\la m, a \ra$ is the minimum element of $C \cap C_a$, and consider the element $\la m, z \ra$.  Then $\la m, a \ra <_P \la m, z \ra$.  However, every other element of $C \cap C_a$ is of the form $\la \ell, a \ra$ with $m <_L \ell$, in which case $\la m, z \ra \mid_P \la \ell, a \ra$ because $m <_L \ell$ and $z >_Q a$.  Thus $\la m, z \ra$ is comparable with at least one element of $C$, but it is not comparable with cofinitely many elements of $C$ because it is incomparable with every element of $C \cap C_a$ except $\la m, a \ra$.  This contradicts that $C$ is $(0,\cof)$-homogeneous for $P$.  Thus $C \cap C_a$ is infinite and has no minimum element.  We may therefore define an infinite descending sequence $\la \ell_0, a \ra >_P \la \ell_1, a \ra >_P \la \ell_2, a \ra >_P \cdots$ in $C \cap C_a$ and hence an infinite descending sequence $\ell_0 >_L \ell_1 >_L \ell_2 >_L \cdots$ in $L$.

The case where $C \cap C_z$ is infinite is dual to the previous case.  If $C \cap C_z$ has maximum element $\la m, z \ra$, then $\la m, a \ra$ witnesses that $C$ is not $(0,\cof)$-homogeneous.  Thus $C \cap C_z$ is infinite and has no maximum element.  We may therefore define an infinite ascending sequence $\la \ell_0, z \ra <_P \la \ell_1, z \ra <_P \la \ell_2, z \ra <_P \cdots$ in $C \cap C_z$ and hence an infinite ascending sequence $\ell_0 <_L \ell_1 <_L \ell_2 <_L \cdots$ in $L$.

Thus $L$ has either an infinite ascending sequence or an infinite descending sequence.
\end{proof}

Thus for $k \geq 2$, $\cofrspo_k$ implies $\ads$, which implies that every $(0,\cof)$-homogeneous chain in a partial order of width $\leq\! k$ has a suborder of type either $\omega$ or $\omega^*$.  We therefore have the following proposition.

\begin{Proposition}\label{prop-COFvsAscDesc}
$\rca$ proves the statement ``For every $k \geq 2$, $\cofrspo_k$ holds if and only if every infinite partial order of width $\leq\! k$ has a $(0,\infty)$-homogeneous chain of order-type either $\omega$ or $\omega^*$.''
\end{Proposition}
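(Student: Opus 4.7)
The plan is to prove the two directions of the biconditional separately, relying on observations already recorded in Section~\ref{sec-Dilworth} together with Lemma~\ref{lem-cofRSpoCD2ImpADS}. Fix $k \geq 2$ throughout the argument.

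For the backward implication, assume that every infinite partial order of width $\leq\! k$ has a $(0,\infty)$-homogeneous chain of order-type $\omega$ or $\omega^*$; we want to deduce $\cofrspo_k$. Let $(P, <_P)$ be an infinite partial order of width $\leq\! k$, and let $C$ be a $(0,\infty)$-homogeneous chain for $P$ of order-type $\omega$ or $\omega^*$. As noted in Section~\ref{sec-Dilworth}, such a chain is automatically $(0,\cof)$-homogeneous for $P$: a witness $p \in P$ to comparability with infinitely many elements of a chain of type $\omega$ (respectively $\omega^*$) must either lie above some element, hence above cofinitely many, or below some element, hence below cofinitely many. So $C$ itself already witnesses $\cofrspo_k$ for $P$.

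For the forward implication, assume $\cofrspo_k$. Since $k \geq 2$, any partial order of width $\leq\! 2$ also has width $\leq\! k$, so $\cofrspo_k$ implies $\cofrspo_2$. Because every $2$-chain decomposable partial order has width $\leq\! 2$, this further yields $\cofrspocd_2$, and hence $\ads$ by Lemma~\ref{lem-cofRSpoCD2ImpADS}. Now let $(P, <_P)$ be an infinite partial order of width $\leq\! k$, and apply $\cofrspo_k$ to obtain a $(0,\cof)$-homogeneous chain $C$ for $P$. Viewing $C$ as an infinite linear order and applying $\ads$, we obtain an infinite ascending sequence or an infinite descending sequence $D \subseteq C$. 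Passing to the range, $D$ is an infinite subset of $C$, and as observed in Section~\ref{sec-Dilworth} every infinite subset of a $(0,\cof)$-homogeneous chain is $(0,\cof)$-homogeneous; in particular $D$ is a $(0,\infty)$-homogeneous chain for $P$ of order-type $\omega$ or $\omega^*$, as required.

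There is essentially no technical obstacle to this proof: both directions reduce to observations that were explicitly flagged in Section~\ref{sec-Dilworth}, the only non-trivial bookkeeping being the chain of implications $\cofrspo_k \imp \cofrspo_2 \imp \cofrspocd_2 \imp \ads$ used to secure $\ads$ in the base theory when extracting a monotone subsequence from $C$. Everything else is carried out in $\rca$ using only the fact that $\rca$ can thin any sequence of distinct elements to one whose range exists as a set.
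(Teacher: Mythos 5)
Your proof is correct and follows essentially the same route as the paper's: the backward direction is the Section~\ref{sec-Dilworth} observation that $(0,\infty)$-homogeneous chains of type $\omega$ or $\omega^*$ are automatically $(0,\cof)$-homogeneous, and the forward direction extracts $\ads$ from $\cofrspo_k$ via Lemma~\ref{lem-cofRSpoCD2ImpADS} and then thins the $(0,\cof)$-homogeneous chain to a monotone suborder. The only difference is that you spell out the intermediate step $\cofrspo_2 \imp \cofrspocd_2$, which the paper leaves implicit.
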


\begin{proof}
Let $k \geq 2$.  In any partial order, a $(0,\infty)$-homogeneous chain of order-type $\omega$ or $\omega^*$ is necessarily $(0,\cof)$-homogeneous.  Thus if every infinite partial order of width $\leq\! k$ has a $(0,\infty)$-homogeneous chain of order-type either $\omega$ or $\omega^*$, then $\cofrspo_k$ holds.  Conversely, suppose that $\cofrspo_k$ holds.  Then $\cofrspo_2$ holds because $k \geq 2$, so $\ads$ holds by Lemma~\ref{lem-cofRSpoCD2ImpADS}.  Let $(P, <_P)$ be an infinite partial order of width $\leq\! k$.  Then $P$ has a $(0,\cof)$-homogeneous chain $C$ by $\cofrspo_k$, and $C$ has a suborder $B$ of type either $\omega$ or $\omega^*$ by $\ads$.  The chain $B$ is also $(0,\cof)$-homogeneous, and therefore it is $(0,\infty)$-homogeneous.  Thus $B$ is a $(0,\infty)$-homogeneous chain in $P$ of order-type either $\omega$ or $\omega^*$.
\end{proof}

Of course, Proposition~\ref{prop-COFvsAscDesc} also holds with $\cofrspocd_k$ and ``that is $k$-chain decomposable'' in place of $\cofrspo_k$ and ``of width $\leq\! k$.''

The following theorem characterizes the strength the Rival--Sands theorem for partial orders.

\begin{Theorem}\label{thm-RSpoEquivs}
{\ }
\begin{enumerate}[(1)]
\item\label{it-equivRSpo} $\rspo_{<\infty}$, $\rspocd_{<\infty}$, and $\isig{2} + \ads$ are pairwise equivalent over $\rca$.

\medskip

\item\label{it-equivRSpok} For each fixed standard $k \geq 3$, $\rspo_k$, $\rspocd_k$, and $\ads$ are pairwise equivalent over $\rca$.

\medskip

\item\label{it-equivRSpoCD2} $\rspocd_2$ and $\sads$ are equivalent over $\rca$.

\medskip

\item\label{it-equivRSpo2} $\rspo_2$, $\rspocd_2$, and $\sads$ are pairwise equivalent over $\wkl$.

\medskip

\item\label{it-equivCofRSpo2} $\cofrspo_2$, $\cofrspocd_2$, and $\ads$ are pairwise equivalent over $\rca + \isig{2}$.
\end{enumerate}
\end{Theorem}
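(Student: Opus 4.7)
My plan is to assemble the theorem entirely from previously established results by arranging the implications into cycles. The real work has already been carried out in Sections~\ref{sec-FirstProofs}--\ref{sec-reverse}, and what remains is the relatively mechanical task of routing each reversal through the right variant. The only genuine design choice—essentially the only ``obstacle''—is to recognize that in every reversal I will first pass from $\rspocd_{<\infty}$ or $\rspocd_k$ down to $\rspocd_3$ (or from $\cofrspocd_2$ down to itself) in order to invoke Lemma~\ref{lem-RSpoCD3ImpADS} (respectively Lemma~\ref{lem-RSpoCD2ImpSADS} or Lemma~\ref{lem-cofRSpoCD2ImpADS}) to extract $\ads$ or $\sads$.

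For item~\ref{it-equivRSpo}, I would chain together the equivalence $\rca \vdash \rspo_{<\infty} \biimp \rspocd_{<\infty}$ from Section~\ref{sec-Dilworth} (a consequence of Kierstead's Theorem~\ref{thm-Kierstead}), the forward direction $\rca + \isig{2} + \ads \vdash \rspo_{<\infty}$ from Theorem~\ref{thm-ForwardRSpo}, and the two reversals $\rca + \rspocd_{<\infty} \vdash \isig{2}$ from Lemma~\ref{lem-RSpoCDImpISig2} and $\rca + \rspocd_{<\infty} \vdash \ads$, the latter obtained by noting that $\rspocd_{<\infty} \imp \rspocd_3$ and applying Lemma~\ref{lem-RSpoCD3ImpADS}. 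For item~\ref{it-equivRSpok}, with $k \geq 3$ fixed and standard, I would close the cycle $\rspo_k \imp \rspocd_k \imp \rspocd_3 \imp \ads \imp \rspo_k$. The first implication is immediate since any partial order of width $\leq\! k$ is $k$-chain decomposable via Kierstead (but here we just use that a chain decomposition of $P$ into at most $k$ chains gives one directly); the second uses that any $3$-chain decomposable partial order is $k$-chain decomposable by padding with empty chains; the third is Lemma~\ref{lem-RSpoCD3ImpADS}; and the fourth is Theorem~\ref{thm-ForwardRSpok}.

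For item~\ref{it-equivRSpoCD2}, the two directions are exactly Theorem~\ref{thm-ForwardRSpoCD2} and Lemma~\ref{lem-RSpoCD2ImpSADS}. Item~\ref{it-equivRSpo2} then follows from item~\ref{it-equivRSpoCD2} together with the observation from Section~\ref{sec-Dilworth} that $\wkl \vdash \rspo_k \biimp \rspocd_k$ (since $\wkl$ proves Dilworth's theorem); this already supplies the extra equivalence with $\rspo_2$ over $\wkl$.

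Finally, for item~\ref{it-equivCofRSpo2}, I would close the cycle $\cofrspo_2 \imp \cofrspocd_2 \imp \ads \imp \cofrspo_2$. The first implication is immediate since every $2$-chain decomposable partial order has width $\leq\! 2$; the second is Lemma~\ref{lem-cofRSpoCD2ImpADS}; and the third is Theorem~\ref{thm-cofRSpo2InADSISig2}, which is precisely where the additional $\isig{2}$ in the ambient base theory is used. Because all three implications are available over $\rca + \isig{2}$, the three principles are pairwise equivalent over that system, completing the proof.
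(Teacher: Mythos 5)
Your proposal is correct and follows exactly the paper's own proof: each item is assembled from Theorems~\ref{thm-ForwardRSpo}, \ref{thm-ForwardRSpok}, \ref{thm-ForwardRSpoCD2}, and~\ref{thm-cofRSpo2InADSISig2} together with Lemmas~\ref{lem-RSpoCD3ImpADS}, \ref{lem-RSpoCD2ImpSADS}, \ref{lem-RSpoCDImpISig2}, and~\ref{lem-cofRSpoCD2ImpADS}, with the same routing of the reversals through $\rspocd_3$ and $\rspocd_2$ and the same use of the $\wkl$-provable equivalence of $\rspo_2$ and $\rspocd_2$ for item~\ref{it-equivRSpo2}. The one loose phrase is your justification of $\rspo_k \imp \rspocd_k$: the correct reason is simply that a $k$-chain decomposable partial order has width $\leq\! k$ (not that width-$\leq\! k$ orders are $k$-chain decomposable, which is Dilworth's theorem and requires $\wkl$), but your parenthetical already captures this and the argument goes through.
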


\begin{proof}
Item~\ref{it-equivRSpo} is by Theorem~\ref{thm-ForwardRSpo}, Lemmas~\ref{lem-RSpoCD3ImpADS} and~\ref{lem-RSpoCDImpISig2}, and the fact that $\rspo_{<\infty}$ implies $\rspocd_{<\infty}$.  Item~\ref{it-equivRSpok} is by Theorem~\ref{thm-ForwardRSpok}, Lemma~\ref{lem-RSpoCD3ImpADS}, and the fact that $\rspo_k$ implies $\rspocd_k$.  Item~\ref{it-equivRSpoCD2} is by Theorem~\ref{thm-ForwardRSpoCD2} and Lemma~\ref{lem-RSpoCD2ImpSADS}.  Item~\ref{it-equivRSpo2} is by item~\ref{it-equivRSpoCD2} and the fact that $\wkl$ proves the equivalence of $\rspo_2$ and $\rspocd_2$, as explained in Section~\ref{sec-Dilworth}.  Item~\ref{it-equivCofRSpo2} is by Theorem~\ref{thm-cofRSpo2InADSISig2} and Lemma~\ref{lem-cofRSpoCD2ImpADS}
\end{proof}

We end this section with a few questions.  First, we still know no better proof of $\cofrspo_{<\infty}$ than the proof in $\pica$ from Theorem~\ref{thm-cofRSpoInPica}.  Of course, $\cofrspo_{<\infty}$ cannot be equivalent to $\pica$ over $\rca$ because it is a true $\Pi^1_2$ sentence, and true $\Pi^1_2$ sentences cannot imply $\pica$ over $\rca$ (see~\cite{AharoniMagidorShore}*{Proposition~4.17}).

\begin{Question}
What is the strength of $\cofrspo_{<\infty}$?  What is the strength of $\cofrspo_k$ for each fixed $k \geq 3$?
\end{Question}

In the case $k=2$, we do not know if $\rca + \isig{2}$ can be weakened to $\rca$ in Theorem~\ref{thm-RSpoEquivs} item~\ref{it-equivCofRSpo2}.

\begin{Question}
Are $\cofrspo_2$, $\cofrspocd_2$, and $\ads$ also pairwise equivalent over $\rca$?
\end{Question}

Finally, we do not know if the equivalence of $\rspo_2$ and $\sads$ over $\wkl$ of Theorem~\ref{thm-RSpoEquivs} item~\ref{it-equivRSpo2} also holds over $\rca$.

\begin{Question}
What is the strength of $\rspo_2$ relative to $\rca$?  In particular, does $\rca + \sads \vdash \rspo_2$?
\end{Question}

\section{Extending the Rival--Sands theorem to partial orders without infinite antichains}\label{sec-ExtendedRSpo}

It is possible for a partial order to have arbitrarily large finite antichains (and hence to \emph{not} have finite width) but still have no infinite antichain.  The goal of this section is to extend $\rspo$ and $\cofrspo$ to countably infinite partial orders that do not have infinite antichains.  To our knowledge, these extensions are new combinatorial results.  Furthermore, we show that the extension of $\rspo$ to countably infinite partial orders without infinite antichains is equivalent to $\aca$ over $\rca$.

Unions of ideals play the role of unions of chains when working with partial orders without infinite antichains.  Recall that an \emph{ideal} in a partial order $(P, <_P)$ is a set $I \subseteq P$ that is downward-closed:  $\forall p, q \in P \; ((p \in I \;\andd\; q \leq_P p) \;\imp\; q \in I)$ and upward-directed:  $\forall p, q \in I \; \exists r \in I \; (p \leq_P r \;\andd\; q \leq_P r)$.

A theorem of Bonnet~\cite{Bonnet}*{Lemma~2} states that a partial order has no infinite antichain if and only if every initial interval (i.e., downward-closed set) is a finite union of ideals.  Frittaion and Marcone determined that this theorem is equivalent to $\aca$ over $\rca$.

\begin{Theorem}[\cite{FrittaionMarcone}*{Theorem~4.5}]\label{thm-IdealDecomp}
The following are equivalent over $\rca$.
\begin{enumerate}[(1)]
\item\label{it-IdealACA} $\aca$.

\medskip

\item\label{it-IdealDecomp} For every partial order $(P, <_P)$, if $P$ has no infinite antichain, then every initial interval of $P$ is a finite union of ideals.
\end{enumerate}
\end{Theorem}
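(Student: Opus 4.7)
The plan is to prove the two implications separately. The forward direction, from $\aca$ to item~\ref{it-IdealDecomp}, follows the classical outline for Bonnet-style theorems by arguing that every initial interval is the union of its (finitely many) maximal ideals. The reverse direction follows the true-and-false-number strategy of Construction~\ref{const-TFstages}, already exploited in the proof of Lemma~\ref{lem-ChainSplitting}, and reduces via Lemma~\ref{lem-ACAinjection} to showing that one can compute the range of an arbitrary injection $f \colon \Nb \imp \Nb$.

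For the forward direction, let $(P, <_P)$ have no infinite antichain and let $D \subseteq P$ be an initial interval. Arithmetical comprehension defines the set of ideals of $D$, and a Zorn-style argument via K\"{o}nig's lemma (both available in $\aca$) shows that every ideal of $D$ extends to a maximal ideal, so $D$ is the union of its maximal ideals. To show that there are only finitely many maximal ideals, suppose for contradiction that $I_0, I_1, \ldots$ is an infinite sequence of pairwise-distinct maximal ideals; maximality yields, for each pair $m \neq n$, witnesses to $I_m \not\subseteq I_n$ and $I_n \not\subseteq I_m$, and organising these witnesses on a suitable finitely branching tree and applying K\"{o}nig's lemma produces an infinite antichain in $P$, contradicting the hypothesis.

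For the reverse direction, given an injection $f \colon \Nb \imp \Nb$ I would construct, uniformly in $f$, a partial order $P_f$ with no infinite antichain together with an initial interval $D_f \subseteq P_f$ so that the finite ideal decomposition $D_f = I_1 \cup \cdots \cup I_k$ provided by item~\ref{it-IdealDecomp} yields an infinite set of true numbers for $f$ computable from $f$ and the $I_j$. Such a set together with $f$ computes $\ran(f)$ by the remarks following Construction~\ref{const-TFstages}, so Lemma~\ref{lem-ACAinjection} then delivers $\aca$. The construction of $P_f$ would be modelled on the poset $S$ from the proof of Lemma~\ref{lem-ChainSplitting}: columns indexed by $n \in \Nb$ that are infinite descending chains when $n$ is true and finite descending chains when $n$ is false, with cross-column comparabilities inherited from the linear order $L$ of Construction~\ref{const-TFstages} in order to block infinite antichains.

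The main obstacle will be arranging $P_f$ and $D_f$ so that the forced finite ideal decomposition is genuinely informative. The direct choice $D_f = P_f$ with the sketched construction makes $P_f$ itself directed and hence a single ideal, yielding no information about true versus false numbers; restricting to an arbitrary downward-closed subset often preserves this single-ideal phenomenon. To split the decomposition into several ideals one must introduce controlled incomparabilities---for instance, by separating columns into blocks by bounded-width antichain barriers, or by taking $D_f$ to be an initial interval chosen so that directedness fails in a way correlated with the true indices---so that the resulting ideals $I_j$ correspond to infinitely many distinct true indices that can be extracted uniformly from $f$ and the $I_j$. The case split between finitely many and infinitely many false numbers, as in Lemma~\ref{lem-ChainSplitting}, will likely also need separate treatment.
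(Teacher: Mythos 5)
This theorem is imported from Frittaion and Marcone \cite{FrittaionMarcone}*{Theorem~4.5}; the paper gives no proof of it, so your proposal has to stand entirely on its own, and as written it does not. The decisive gap is the reversal, where what you have is a plan rather than a proof. You correctly diagnose the central obstruction: the order $S$ from the reversal in Lemma~\ref{lem-ChainSplitting} is in fact a linear order, hence directed, hence a single ideal, so the decomposition promised by item~\ref{it-IdealDecomp} carries no information about the true numbers of $f$. But you then leave exactly this obstruction unresolved, listing candidate repairs (``antichain barriers,'' a cleverly chosen initial interval) without verifying that any of them simultaneously (i) keeps $P_f$ free of infinite antichains, (ii) keeps the distinguished initial interval $D_f$ computable from $f$ so that it exists in $\rca$, and (iii) forces every finite ideal decomposition of $D_f$ to reveal infinitely many true numbers. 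Requirements (i) and (iii) pull against each other --- the comparabilities one adds to kill infinite antichains tend to make every downward-closed set directed, collapsing the decomposition to one ideal --- and resolving that tension is the entire content of the reversal. Until a concrete $P_f$ and $D_f$ are exhibited and verified, the implication \ref{it-IdealDecomp}~$\Imp$~\ref{it-IdealACA} is not established.

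The forward direction also has problems. ``Arithmetical comprehension defines the set of ideals of $D$'' is not a legitimate step in second-order arithmetic: the ideals of $D$ form a collection of subsets of $\Nb$, a third-order object, not something obtainable from a comprehension instance (you can at best handle suitably coded or finitely generated ideals). More seriously, the appeal to K\"{o}nig's lemma to get finiteness of the family of maximal ideals does not work as described, because the natural trees here are not finitely branching: already in the disjoint union of two chains of type $\omega$ (width $2$, no infinite antichain) a fixed element is incomparable with infinitely many others, so a tree of finite antichains, or of witnesses to the non-inclusions $I_m \nsubseteq I_n$, branches infinitely even though it has no infinite path. The finiteness argument genuinely needs different tools --- for instance the primality of ideals (if an ideal is covered by finitely many ideals it is contained in one of them), used to select $x_n \in I_n \setminus \bigcup_{m<n} I_m$ so that $x_n \leq_P x_m$ is impossible for $m < n$, followed by a Ramsey-type dichotomy inside $\aca$ to extract the infinite antichain --- none of which appears in your sketch. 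The covering-by-maximal-ideals step is plausibly salvageable in $\aca$ via a greedy construction in place of ``Zorn,'' but as written both halves of your argument rest on moves that are either unformalizable or invalid.
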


Furthermore, Frittaion and Marcone observe that in Theorem~\ref{thm-IdealDecomp} item~\ref{it-IdealDecomp}, it may additionally be assumed that the partial order $(P, <_P)$ is an \emph{essential} union of finitely many ideals.  This means that $P = \bigcup_{i<k}I_i$ for ideals $I_i \subseteq P$ with $i < k$ for some $k$, where additionally $I_i \nsubseteq \bigcup_{\substack{j < k \\ j \neq i}}I_j$ for every $i < k$ (see~\cite{FrittaionMarcone}*{Lemma~3.3}).  We warn the reader that when we write a partial order as a union of ideals, we may \emph{not} necessarily assume that the ideals are disjoint as we do with chain decompositions.

\begin{Theorem}\label{thm-CofExtendedRSpo}
$\pica$ proves the statement ``Every infinite partial order with no infinite antichain has a $(0,\cof)$-homogeneous chain.''
\end{Theorem}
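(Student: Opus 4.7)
The plan is to mimic the proof of Theorem~\ref{thm-cofRSpoInPica}, with Bonnet's ideal decomposition (Theorem~\ref{thm-IdealDecomp}) playing the role of the chain decomposition. First, working in $\pica$, I would apply Theorem~\ref{thm-IdealDecomp} to write $P = \bigcup_{i<k} I_i$ as an essential finite union of ideals. Since a $(0,\cof)$-homogeneous chain for $(P, <_P)$ is also a $(0,\cof)$-homogeneous chain for the reverse partial order $(P, >_P)$, which likewise has no infinite antichain, and since by $\cac$ and $\ads$ (both provable in $\aca$) $P$ contains either an infinite ascending or infinite descending sequence, I may reverse $P$ if necessary and assume that $P$ contains an infinite ascending sequence. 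The pigeonhole principle then provides some $I_{i_0}$ containing an infinite ascending sequence.

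Next, using $\pica$, I would simultaneously form the $\Sigma^1_1$-definable sets $W_i = \{p \in I_i : p\ua \cap I_i \text{ is reverse ill-founded}\}$ for each $i < k$. As in the proof of Theorem~\ref{thm-cofRSpoInPica}, $W_i$ has no maximum element and $I_i \setminus W_i$ is reverse well-founded. For each non-empty $W_i$, I would construct an infinite ascending sequence $A_i$ in $I_i$ that is cofinal in $W_i$, using upward-directedness of $I_i$ to interleave elements of $W_i$ with ascending witnesses above them. Assuming for contradiction that no tail of any $A_i$ is $(0,\infty)$-homogeneous, I would apply an analog of Lemma~\ref{lem-CXChain} adapted to ideals: for each such $i$, produce a counterexample sequence $B_i$ inside some ideal $I_{h(i)}$, using upward-directedness of $I_{h(i)}$ in place of the chain property when thinning counterexamples inside $I_{h(i)}$ into an ascending sequence. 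Since $B_i$ is an infinite ascending sequence in $I_{h(i)}$, we have $W_{h(i)} \neq \emptyset$, so $h$ restricts to a self-map of the finite set $\{i : W_i \neq \emptyset\}$. The familiar cycle argument then yields $B_{h^{m+1}(0)} \pwb A_{h^{m+1}(0)}$, contradicting that $B_{h^{m+1}(0)}$ is a counterexample sequence for $A_{h^{m+1}(0)}$. Thus some tail of some $A_i$ is $(0,\infty)$-homogeneous, which after thinning yields a $(0,\cof)$-homogeneous chain of order-type $\omega$ in $P$.

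The main obstacle is the case in which some ideal $I_j$ has a maximum element $m_j$: the only common upper bound in $I_j$ of two incomparable elements of $W_j$ may be $m_j$, which lies outside $W_j$, so $W_j$ need not be upward-directed and may fail to admit a cofinal ascending sequence within itself. I plan to address this by handling maximal elements of $P$ separately. The set $M$ of maximal elements of $P$ is an antichain and hence finite, as otherwise $P$ would contain an infinite antichain. After obtaining a candidate $(0,\cof)$-homogeneous chain $C$ for $P \setminus M$, one may thin $C$ so that for each $m \in M$ the set $\{c \in C : c <_P m\}$ is either empty or cofinite in $C$; since $M$ is finite, a finite case analysis (splitting $C$ at each $m \in M$ into the part below $m$ and the part incomparable with $m$, and passing to whichever is infinite) produces an infinite subchain that is $(0,\cof)$-homogeneous with respect to all of $P$.
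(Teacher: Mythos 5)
Your overall architecture (ideals in place of chains, $\pica$ to isolate the reverse-ill-founded parts, counterexample sequences, a finite combinatorial contradiction) is the right one, but the step you yourself flag as the main obstacle is not actually repaired by your patch. By essentiality, an ideal $I_j$ of $P$ with a maximum element $m_j$ must have $m_j$ maximal in $P$, so removing the finite antichain $M$ of maximal elements of $P$ looks plausible; however, the essential ideal decomposition of $P \setminus M$ can again contain ideals with maximum elements (namely, elements all of whose strict upper bounds lie in $M$), and iterating the removal need not terminate. Concretely, let $P$ consist of an infinite descending sequence $d_0 >_P d_1 >_P \cdots$ sitting above two pairwise incomparable ascending chains $A$ and $B$ of type $\omega$ (so $A \mid_P B$ and $d_i >_P A \cup B$ for every $i$). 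This $P$ has no infinite antichain, is itself a single ideal with maximum $d_0$, its set $W_0$ is $A \cup B$, which admits no cofinal ascending sequence, and stripping maximal elements one at a time peels off the $d_i$ forever. So neither your main construction nor your patch gets off the ground on this $P$, even though $A$ itself is a perfectly good $(0,\cof)$-homogeneous chain. The paper's proof avoids this by reversing the order of operations: it first uses $\pica$ to form $Q = \{q \in P : \text{$q\ua$ is reverse ill-founded}\}$ (for the whole of $P$, not ideal by ideal) and only then applies Theorem~\ref{thm-IdealDecomp} to $Q$; essentiality then forces every ideal in the decomposition of $Q$ to lack a maximum element, so each is directed without maximum and carries a cofinal ascending sequence. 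In the example above, $Q = A \cup B$ decomposes into the two ideals $A$ and $B$ and everything works.

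Two smaller points. First, your adaptation of Lemma~\ref{lem-CXChain} cannot rely on upward-directedness of $I_{h(i)}$ to turn the counterexamples into an ascending sequence: two incomparable counterexamples have a common upper bound in the ideal, but that upper bound need not be a counterexample to any tail, so directedness does not let you thin. The paper instead applies $\cac$ to the set of counterexamples (legitimate, since $P$ has no infinite antichain) to extract an infinite chain of counterexamples first, and only then runs the comparability argument of Lemma~\ref{lem-CXChain}. Second, the paper's endgame is not the cycle argument with the self-map $h$: a single ideal $I_\ell$ with a cofinal ascending sequence $C$ suffices, because a chain of counterexamples to the tails of $C$ ends up inside some ideal $I_s$ with $s \neq \ell$, and $I_\ell \pwb C \pwb \{d_{n_j}\} \subseteq I_s$ forces $I_\ell \subseteq I_s$, contradicting essentiality. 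Your cycle argument could probably be made to work once the $A_i$ and $B_i$ are correctly constructed, but the essentiality contradiction is shorter and is where the ``essential'' hypothesis earns its keep.
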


\begin{proof}
Let $(P, <_P)$ be an infinite partial order that does not have infinite antichains.  Then $P$ must have an infinite chain by $\cac$, which must have either an infinite ascending sequence or an infinite descending sequence by $\ads$.  Thus we may assume that $P$ contains an infinite ascending sequence $A$ by reversing the partial order if necessary.  Use $\pica$ (and the fact that the $\Sigma^1_1$ sets are the complements of the $\Pi^1_1$ sets) to define the set $Q = \{q \in P : \text{$q\ua$ is reverse ill-founded}\}$ of elements of $P$ that have infinite ascending sequences above them.  Notice that $Q$ is non-empty because $A \subseteq Q$.

The proof proceeds in $\aca$ from this point onward.  The partial order $(Q, <_P)$ has no infinite antichain because it is a suborder of $P$.  Therefore $Q$ is an essential union of finitely many ideals $Q = \bigcup_{i < k} I_i$ for some $k > 0$ by the \ref{it-IdealACA}~$\Imp$~\ref{it-IdealDecomp} direction of Theorem~\ref{thm-IdealDecomp} and the comment that follows it.  No ideal $I_i$ for $i < k$ has a maximum element.  Suppose for a contradiction that $I_i$ has maximum element $q$.  As $q \in Q$, there is an infinite ascending sequence $q <_P b_0 <_P b_1 <_P \cdots$ in $P$ and therefore in $Q$.  The element $b_0$ is not in $I_i$ because $q$ is the maximum element of $I_i$.  Thus $b_0 \in I_j$ for some $j < k$, $j \neq i$.  But then $I_i \subseteq I_j$ because $b_0 \in I_j$, $q <_P b_0$, $I_j$ is downward-closed, and $q$ is the maximum element of $I_i$.  This contradicts that the union $\bigcup_{i < k} I_i$ is essential.  The partial order $Q$ is infinite, so $\rt^1_{<\infty}$ implies that ideal $I_\ell$ is infinite for some $\ell < k$.  (In fact, $I_i$ is infinite for every $i < k$ because $I_i$ is non-empty and has no maximum element.)  We may thus define an infinite ascending sequence $C = \la c_n : n \in \Nb \ra$ that is cofinal in $I_\ell$ as follows.  Let $\la x_n : n \in \Nb \ra$ enumerate the elements of $I_\ell$.  Let $c_0 = x_0$, and for each $n$, let $c_{n+1}$ be the $<_\Nb$-least element of $I_\ell$ with $c_{n+1} >_P c_n$ and $c_{n+1} >_P x_{n+1}$.  Such a $c_{n+1}$ necessarily exists because $I_\ell$ is an ideal with no maximum element.

We finish the proof by showing that some tail of $C$ is $(0,\infty)$-homogeneous for $P$ and therefore is $(0,\cof)$-homogeneous for $P$.  Suppose for a contradiction that no tail of $C$ is $(0,\infty)$-homogeneous.  Then every tail of $C$ has a counterexample $d$.  For each $n$, let $d_n$ be the $<_\Nb$-least counterexample to the tail $C_{\geq n}$.  The partial order $P$ has no infinite antichain, so by $\cac$ applied to the infinite suborder $\{d_n : n \in \Nb\}$, there is a sequence $n_0 < n_1 < n_2 < \cdots$ such that $\{d_{n_j} : j \in \Nb\}$ is a chain.

Arguing as in the proof of Lemma~\ref{lem-CXChain}, we have that for every $i$, it is the case that $d_{n_i} <_P d_{n_j}$ for all sufficiently large $j$.  Fix an $i$.  The element $d_{n_i}$ is a counterexample to $C_{\geq n_i}$, so there is an $s \geq n_i$ such that $d_{n_i} >_P c_s$ and $d_{n_i} \mid_P C_{\geq s+1}$.  Let $j > s+1$, and consider $d_{n_j}$.  The element $d_{n_j}$ is a counterexample to $C_{\geq n_j}$, so there is a $t \geq n_j$ such that $d_{n_j} >_P c_t$.  We cannot have that $d_{n_j} \leq_P d_{n_i}$ because this would yield that $c_{s+1} <_P c_t <_P d_{n_j} \leq_P d_{n_i}$, contradicting that $d_{n_i} \mid_P c_{s+1}$.  Note here that $s+1 < j \leq n_j \leq t$, so $c_{s+1} < _P c_t$ because $C$ is an ascending sequence.  Therefore it must be that $d_{n_i} <_P d_{n_j}$ because we know that $d_{n_i} \comp_P d_{n_j}$.

Using the above, we may thin the sequence $n_0 < n_1 < n_2 < \cdots$ so that $d_{n_0} <_P d_{n_1} <_P d_{n_2} <_P \cdots$ is an infinite ascending sequence in $P$.  It follows that $\{d_{n_j} : j \in \Nb\} \subseteq Q$.  By $\rt^1_{<\infty}$, there is an $s < k$ such that $I_s$ contains $d_{n_j}$ for infinitely many $j$.  We may therefore further thin the sequence $n_0 < n_1 < n_2 < \cdots$ so that $\{d_{n_j} : j \in \Nb\} \subseteq I_s$.  We cannot have that $s = \ell$ because every element of $I_\ell$ is below a tail of $C$ by the construction of $C$, whereas $d_{n_j}$ is incomparable with a tail of $C$ for every $j$.  Finally, we see that $I_\ell \pwb C$ by the construction of $C$ and that $C \pwb \{d_{n_j} : j \in \Nb\}$ because $c_{n_j} <_P d_{n_j}$ for every $j$.  Therefore $I_\ell \pwb \{d_{n_j} : j \in \Nb\} \subseteq I_s$, so $I_\ell \subseteq I_s$ because $I_s$ is an ideal.  This contradicts that the union $\bigcup_{i < k} I_i$ is essential because $s \neq \ell$.
\end{proof}

As in the case of $\rspo_{<\infty}$ versus $\cofrspo_{<\infty}$, if we only want to produce a chain that is $(0,\infty)$-homogeneous rather than $(0,\cof)$-homogeneous in an infinite partial order $(P, <_P)$ with no infinite antichain, then we may split into the cases of where $P$ contains a suborder of type $\zeta$ and where it does not.

\begin{Theorem}\label{thm-ExtendedRSpo}
$\aca$ proves the statement ``Every infinite partial order with no infinite antichain has a $(0,\infty)$-homogeneous chain.''
\end{Theorem}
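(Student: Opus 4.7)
The plan is to adapt the proof of Theorem~\ref{thm-CofExtendedRSpo} by replacing its $\pica$-definable auxiliary set with one obtainable in $\aca$ through Lemma~\ref{lem-ChainSplitting}, exploiting the fact that the contradiction assumption supplies the missing hypothesis ``no suborder of type $\zeta$.''  I would argue by contradiction, supposing $P$ is an infinite partial order with no infinite antichain and no $(0,\infty)$-homogeneous chain.  Since any chain of order-type $\zeta$ is automatically $(0,\infty)$-homogeneous, $P$ contains no such suborder.  By $\cac$ and $\ads$ (both available in $\aca$), $P$ contains an infinite ascending sequence or an infinite descending sequence; after reversing $P$ if necessary, I may assume there is an infinite ascending sequence in $P$.

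The hypotheses on $P$ are now exactly the ones that Lemma~\ref{lem-ChainSplitting} item~\ref{it-poSplitWF} requires.  Using the ``$p\ua$ is reverse well-founded'' variant permitted by the remark following the lemma, this produces in $\aca$ the set $V = \{p \in P : p\ua \text{ is reverse ill-founded}\}$ of elements of $P$ admitting an infinite ascending sequence above them in $P$.  The set $V$ is non-empty, as it contains every term of the ascending sequence fixed above.  Moreover, if $v \in V$ and $v <_P b_0 <_P b_1 <_P \cdots$ is an infinite ascending sequence in $P$, then each $b_n$ has $b_{n+1} <_P b_{n+2} <_P \cdots$ above it and so also lies in $V$; thus any infinite ascending sequence starting above an element of $V$ is itself contained in $V$.

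From this point the argument follows the proof of Theorem~\ref{thm-CofExtendedRSpo} essentially verbatim, with $V$ playing the role of $Q$.  Since $V$ has no infinite antichain, Theorem~\ref{thm-IdealDecomp} together with the subsequent essential-union remark yields a decomposition $V = \bigcup_{i<k}I_i$ into ideals of $V$ whose union is essential.  No $I_i$ has a maximum element: a hypothetical maximum $q$ of $I_i$ would, since $q \in V$, admit an ascending sequence above it lying entirely in $V$, whose first term would lie in some $I_j$ with $j \neq i$, forcing $I_i \subseteq I_j$ by downward-closedness and contradicting essentialness.  Hence some infinite $I_\ell$ admits a cofinal ascending sequence $C$.

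Suppose no tail of $C$ is $(0,\infty)$-homogeneous.  As in Theorem~\ref{thm-CofExtendedRSpo}, let $d_n$ be the $<$-least counterexample to the tail $C_{\geq n}$, apply $\cac$ to $\{d_n : n \in \Nb\}$ (which has no infinite antichain) to extract an infinite chain of counterexamples, and then by the Lemma~\ref{lem-CXChain}-style analysis thin to an infinite ascending sequence $d_{n_0} <_P d_{n_1} <_P \cdots$.  These counterexamples all lie in $V$ by the property of $V$ from the second paragraph, so $\rt^1_{<\infty}$ places them in some single $I_s$.  The relations $I_\ell \pwb C \pwb \{d_{n_j} : j \in \Nb\} \subseteq I_s$ yield $I_\ell \subseteq I_s$ since $I_s$ is downward-closed in $V$, while the incomparability of each $d_{n_j}$ with some tail of $C$, versus the fact that every element of $I_\ell$ lies below a tail of $C$, forces $\ell \neq s$, contradicting essentialness.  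The main obstacle is producing the set $V$ in $\aca$: the direct $\Sigma^1_1$ definition ``$p$ has an infinite ascending sequence above it in $P$'' would ordinarily call for $\pica$, but Lemma~\ref{lem-ChainSplitting} converts the no-$\zeta$-chain hypothesis into exactly the $\aca$-definability needed.
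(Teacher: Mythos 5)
Your proposal is correct and follows essentially the same route as the paper: the paper's proof of Theorem~\ref{thm-ExtendedRSpo} likewise reduces to the argument of Theorem~\ref{thm-CofExtendedRSpo}, observing that a chain of type $\zeta$ is automatically $(0,\infty)$-homogeneous so that $P$ may be assumed to have no such suborder, and then invoking Lemma~\ref{lem-ChainSplitting} item~\ref{it-poSplitWF} (in the ``$p\ua$ is reverse ill-founded'' variant noted after that lemma) to form the set $Q$ in $\aca$ instead of $\pica$. The remainder of your argument matches the paper's verbatim continuation of the Theorem~\ref{thm-CofExtendedRSpo} proof.
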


\begin{proof}
The proof is the same as the proof of Theorem~\ref{thm-CofExtendedRSpo}, except now we use Lemma~\ref{lem-ChainSplitting} item~\ref{it-poSplitWF} in place of $\pica$.  Let $(P, <_P)$ be an infinite partial order that does not have infinite antichains.  A chain of order-type $\zeta$ is necessarily $(0,\infty)$-homogeneous, so we may additionally assume that $P$ has no suborder of type $\zeta$.  As in the proof of Theorem~\ref{thm-CofExtendedRSpo}, we may assume that $P$ has an infinite ascending sequence by applying $\cac$ and $\ads$ and by reversing the order if needed.  By the \ref{it-ACACS}~$\Imp$~\ref{it-poSplitWF} direction of Lemma~\ref{lem-ChainSplitting} and the comments that follow it, we may form the set $Q = \{q \in P : \text{$q\ua$ is reverse ill-founded}\}$.  The proof now continues exactly as in that of Theorem~\ref{thm-CofExtendedRSpo}.
\end{proof}

Recall that a partial order $(P, <_P)$ is a \emph{well-partial-order} if for every function $h \colon \Nb \imp P$, there are $m, n \in \Nb$ with $m < n$ such that $f(m) \leq_P f(n)$.  A function $h \colon \Nb \imp P$ witnessing that $P$ is not a well-partial-order (i.e., such that $h(m) \nleq_P h(n)$ whenever $m < n$) is called a \emph{bad sequence}.  Any bijective enumeration of an infinite antichain in a partial order $(P, <_P)$ is a bad sequence, so well-partial-orders do not have infinite antichains.

To reverse Theorem~\ref{thm-ExtendedRSpo} for well-partial-orders, we employ the construction of~\cite{FrittaionHendtlassMarconeShaferVanderMeeren}*{Definition~4.2}, which is a generalization of Construction~\ref{const-TFstages}.  This construction takes an injection $f \colon \Nb \imp \Nb$ and a finite partial order $P$ with a distinguished element $x \in P$ and produces an infinite partial order $\Xi_f(P,x)$ such that the range of $f$ is recursive in the join of $f$ with any bad sequence in $\Xi_f(P,x)$.  That is, either $\Xi_f(P,x)$ is a well-partial-order or the range of $f$ exists as a set.  Thus given an injection $f$, the strategy is to construct $\Xi_f(P,x)$ for the $2$-element antichain $P = \{x, y\}$, show that $\Xi_f(P,x)$ has no $(0,\infty)$-homogeneous chain, conclude that $\Xi_f(P,x)$ is not a well-partial-order, and then conclude that the range of $f$ exists as a set.

We include Construction~\ref{const-Xi} for the reader's convenience.  Given an injection $f \colon \Nb \imp \Nb$, let
\begin{align*}
T_s &= \{n < s : \text{$n$ is true at stage $s$}\}\\
&= \{n < s : \forall k \, (n < k \leq s \;\imp\; f(n) < f(k))\}
\end{align*}
for each $s \in \Nb$.

\begin{Construction}[\cite{FrittaionHendtlassMarconeShaferVanderMeeren}*{Definition~4.2}]\label{const-Xi}
Let $f \colon \Nb \imp \Nb$ be an injection, let $(P, <_P)$ be a finite partial order, and let $x \in P$.  Define the partial order $(Q, <_Q) = \Xi_f(P,x)$ as follows.  Make countably many disjoint copies $(P_n, <_{P_n})$ of $P$ by setting $P_n = \{n\} \times P$ and by setting $\la n, y \ra <_{P_n} \la n, z \ra$ if and only if $y <_P z$ for all $n \in \Nb$ and all $x, y \in P$.  Let $x_n = \la n, x \ra$ denote the copy of $x$ in $P_n$.  The domain of $Q$ is $\bigcup_{n \in \Nb}P_n$.  Define $<_Q$ in stages, where at stage $s$, $<_Q$ is defined on $\bigcup_{n \leq s}P_n$.

\begin{itemize}
\item At stage $0$, define $<_Q$ to be $<_{P_0}$ on $P_0$.

\medskip

\item Suppose $<_Q$ is defined on $\bigcup_{n \leq s} P_s$.  There are two cases.

\medskip

\begin{enumerate}[(1)]
\item If $T_{s+1} \subsetneq T_s \cup \{s\}$, let $n_0$ be the least element of $(T_s \cup \{s\}) \setminus T_{s+1}$, and place $P_{s+1}$ immediately above $x_{n_0}$.  That is, place the elements of $P_{s+1}$ above all $y \in \bigcup_{n \leq s} P_s$ such that $y \leq_Q x_{n_0}$, below all $y \in \bigcup_{n \leq s} P_s$ such that $y >_Q x_{n_0}$, and incomparable with all $y \in \bigcup_{n \leq s} P_s$ that are incomparable with $x_{n_0}$.

\medskip

\item If $T_{s+1} = T_s \cup \{s\}$, place $P_{s+1}$ immediately below $x_s$.  That is, place the elements of $P_{s+1}$ above all $y \in \bigcup_{n \leq s} P_s$ such that $y <_Q x_s$, below all $y \in \bigcup_{n \leq s} P_s$ such that $y \geq_Q x_s$, and incomparable with all $y \in \bigcup_{n \leq s} P_s$ that are incomparable with $x_s$.
\end{enumerate}

\medskip

In both cases, define $<_Q$ to be $<_{P_{s+1}}$ on $P_{s+1}$.
\end{itemize}
This construction can be carried out in $\rca$.
\end{Construction}

The following two lemmas encapsulate the important properties of Construction~\ref{const-Xi}.

\begin{Lemma}[\cite{FrittaionHendtlassMarconeShaferVanderMeeren}*{Lemma~4.3}]\label{lem-QProps}
The following is provable in $\rca$.
Let $f \colon \Nb \imp \Nb$ be an injection, let $P$ be a finite partial order, let $x \in P$, and let $(Q, <_Q) = \Xi_f (P,x)$.  Consider $m, n \in \Nb$ with $n < m$.
\begin{enumerate}[(1)]
\item\label{it-QPropTrue} If $n \in T_m$, then $P_m <_Q x_n$ and $\forall y \in P_n \; (x_n \mid_Q y \;\imp\; P_m \mid_Q y)$.

\medskip

\item\label{it-QPropFalse} If $n \notin T_m$, then $x_n <_Q P_m$.
\end{enumerate}
\end{Lemma}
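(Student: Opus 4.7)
The plan is to prove items~(1) and~(2) simultaneously by strong induction on $m$, with the inductive hypothesis supplying both conclusions for every pair $n' < m'$ with $m' < m$. The base case $m = 1$ is immediate from inspection of stage~$1$ of Construction~\ref{const-Xi}: if $f(0) < f(1)$ we are in Case~II and $P_1$ is placed immediately below $x_0$, giving~(1); if $f(0) > f(1)$ we are in Case~I with $n_0 = 0$ and $P_1$ is placed immediately above $x_0$, giving~(2). Before entering the inductive step I would record the structural fact that drives the whole argument: within $T_{m-1}\cup\{m-1\}$ the values $f(k)$ are strictly increasing with the index $k$ (for $k\in T_{m-1}$, taking $j = m-1$ in the definition of $T_{m-1}$ gives $f(k) < f(m-1)$; for $k_1 < k_2$ both in $T_{m-1}$, taking $j = k_2$ gives $f(k_1) < f(k_2)$). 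Consequently $T_m$ coincides with $\{k\in T_{m-1}\cup\{m-1\} : f(k) < f(m)\}$ and is an initial segment of $T_{m-1}\cup\{m-1\}$ when ordered by index, whose first omitted element, if any, is exactly the $n_0$ selected in Case~I.

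In Case~II we have $T_m = T_{m-1}\cup\{m-1\}$ and $P_m$ is placed immediately below $x_{m-1}$. Part~(1) for $n = m-1$ is immediate from the placement. For $n\in T_{m-1}$, the inductive hypothesis at $m-1$ yields $P_{m-1} <_Q x_n$ (hence $x_{m-1} <_Q x_n$ and $P_m <_Q x_n$) together with $P_{m-1}\mid_Q y$ for every $y\in P_n$ satisfying $x_n\mid_Q y$, and the placement rule then forces $P_m\mid_Q y$. For~(2), $n\notin T_m$ forces $n < m-1$ and $n\notin T_{m-1}$, and the inductive hypothesis gives $x_n <_Q P_{m-1}\ni x_{m-1}$, so the placement puts $P_m$ above $x_n$.

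In Case~I, $P_m$ is placed immediately above $x_{n_0}$. Every $n\in T_m$ satisfies $n < n_0 \leq m-1$ by the initial-segment observation and therefore lies in $T_{m-1}$; applying the inductive hypothesis at $n_0$ (using that $n\in T_{m-1}$ with $n < n_0$ forces $n\in T_{n_0}$, which yields $P_{n_0} <_Q x_n$ when $n_0 \in T_{m-1}$) or at $m-1$ (when $n_0 = m-1$) delivers $x_{n_0} <_Q x_n$, whence the placement gives $P_m <_Q x_n$; the incomparability claim transfers through the same inductive hypothesis. For~(2), the sub-case $n = n_0$ is immediate from the placement, and for $n > n_0$ we must have $n_0 < m-1$ and therefore $n_0\in T_{m-1}$, so combining the inductive hypothesis at $m-1$ with a further application at $n$ (in the sub-sub-case $n < m-1$ with $n\in T_{m-1}$) yields $x_n <_Q x_{n_0}$.

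The main obstacle is the remaining sub-case of~(2): $n < n_0$ with $n\notin T_{m-1}$. Here one needs $x_n <_Q x_{n_0}$ to conclude $x_n <_Q P_m$ from the placement, and the inductive hypothesis at $n_0$ supplies this only provided $n\notin T_{n_0}$. I would resolve this by contradiction: suppose $n\in T_{n_0}$. If $n_0 = m-1$, then $T_{n_0} = T_{m-1}$, contradicting $n\notin T_{m-1}$. Otherwise $n_0 < m-1$, so $n_0\in T_{m-1}$, and then $f(n_0) < f(j)$ for all $j$ with $n_0 < j \leq m-1$, while $n\in T_{n_0}$ gives both $f(n) < f(n_0)$ and $f(n) < f(j)$ for $n < j \leq n_0$. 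Combining these, $f(n) < f(j)$ for all $n < j \leq m-1$, i.e., $n\in T_{m-1}$, again a contradiction. Thus $n\notin T_{n_0}$, the inductive hypothesis at $n_0$ yields $x_n <_Q P_{n_0}\ni x_{n_0}$, the placement puts $P_m$ above $x_n$, and the induction closes.
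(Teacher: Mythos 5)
The paper does not actually prove Lemma~\ref{lem-QProps}: it is imported, together with Construction~\ref{const-Xi} and Lemma~\ref{lem-WPOACAreversal}, from \cite{FrittaionHendtlassMarconeShaferVanderMeeren}*{Lemma~4.3}, so there is no in-paper argument to compare against. Judged on its own, your strong induction on $m$ is correct. The structural facts you isolate up front --- that $f$ is increasing on $T_{m-1}\cup\{m-1\}$ with respect to the index, that $T_m=\{k\in T_{m-1}\cup\{m-1\}: f(k)<f(m)\}$ is therefore an initial segment of $T_{m-1}\cup\{m-1\}$, and that $n_0$ is its least omitted element --- are exactly what is needed to push each case of the placement rule through the inductive hypothesis, and your treatment of the delicate sub-case $n<n_0$ with $n\notin T_{m-1}$ (deriving $n\notin T_{n_0}$ by contradiction so that item~(2) of the inductive hypothesis at the pair $(n,n_0)$ applies) is right.

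One presentational remark: in part~(2) of Case~I with $n>n_0$, your parenthetical confines the ``further application at $n$'' to the sub-sub-case $n<m-1$ with $n\in T_{m-1}$, which leaves $n<m-1$ with $n\notin T_{m-1}$ formally unaddressed. This is not a genuine gap: the application needed there is the inductive hypothesis at the pair $(n_0,n)$, item~(1), whose hypothesis is $n_0\in T_n$; this follows from $n_0\in T_{m-1}$ together with $n_0<n\leq m-1$ alone, with no assumption about whether $n$ itself lies in $T_{m-1}$. Indeed that single application of the inductive hypothesis handles all $n$ with $n_0<n<m$ uniformly (including $n=m-1$), so your three-way subdivision there can be collapsed. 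Finally, the predicate being inducted on is $\Delta^0_1$ (each $P_k$ is finite and $<_Q$ is computed in stages), so the strong induction is available in $\rca$ as the lemma requires.
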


\begin{Lemma}[\cite{FrittaionHendtlassMarconeShaferVanderMeeren}*{Lemma~4.4}]\label{lem-WPOACAreversal}
The following is provable in $\rca$.
Let $f \colon \Nb \imp \Nb$ be an injection, let $P$ be a finite partial order, let $x \in P$, and let $(Q, <_Q) = \Xi_f (P,x)$.  If $Q$ is not a well-partial-order, then the range of $f$ exists as a set.
\end{Lemma}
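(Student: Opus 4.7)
The plan is to use the bad sequence $h$ together with $f$ to compute $\ran(f)$. The key intermediate object is an infinite set of true numbers for $f$: if $T\subseteq\Nb$ is infinite and every $n\in T$ is true, then $\ran(f)$ is recursive in $f\oplus T$, since given $m$ one searches effectively for $n\in T$ with $f(n)>m$ (the $f$-values along $T$ are unbounded), and then $m\in\ran(f)$ iff $m\in\{f(0),\dots,f(n)\}$ because a true $n$ satisfies $f(k)>f(n)$ for every $k>n$.

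Write $h(i)=\la n_i,y_i\ra$ with $n_i\in\Nb$ and $y_i\in P$. Since each $P_n$ is a copy of the finite partial order $P$, any bad sequence in $\bigcup_{n\leq N}P_n$ has length at most $(N+1)|P|$, as a bad sequence never repeats an element. In particular $\{n_i:i\in\Nb\}$ is unbounded with an effective rate of growth, so we may effectively thin $h$ to arrange $n_0<n_1<n_2<\cdots$. Next, because $|P|$ is a fixed standard number, $\rt^1_{|P|}$ is provable in $\rca$; applying it to the coloring $i\mapsto y_i$ thins $h$ further so that $y_i$ is a fixed element $y\in P$ for every $i$.

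The central claim is that after this thinning every $n_i$ must be a true number. Suppose first that $y\leq_P x$ and, for contradiction, that some $n_i$ is false. Let $s_i$ be the least stage with $n_i\notin T_{s_i}$, and pick $j>i$ with $n_j\geq s_i$. Then $n_i\notin T_{n_j}$, so Lemma~\ref{lem-QProps}\ref{it-QPropFalse} gives $x_{n_i}<_Q h(j)$; combined with $h(i)=\la n_i,y\ra\leq_Q x_{n_i}$ this yields $h(i)<_Q h(j)$, contradicting the bad sequence property. Hence every $n_i$ is true, and $\{n_i:i\in\Nb\}$ is the desired set of true numbers.

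The remaining case $y\not\leq_P x$ is the main obstacle, since Lemma~\ref{lem-QProps}\ref{it-QPropFalse} no longer propagates directly to a comparability between $h(i)$ and $h(j)$. Here the plan is to exploit the explicit placement rule of Construction~\ref{const-Xi}: each copy $P_{n_j}$ is inserted immediately above (or below) a specific anchor $x_{n_0}$ with $n_0\leq n_j-1$, so the $Q$-relation of $h(i)$ to any element of $P_{n_j}$ is controlled by the $Q$-relation of $h(i)$ to that anchor $x_{n_0}$. Splitting further into the subcase $y>_P x$ (where $h(i)>_Q x_{n_i}$, giving a dual comparison argument against a false $n_i$) and the subcase $y\mid_P x$ (where the second conjunct of Lemma~\ref{lem-QProps}\ref{it-QPropTrue} transfers incomparability with $x_{n_i}$ to incomparability of $h(i)$ with all later true-anchored $P_{n_j}$), one again shows that any false $n_i$ forces a forbidden $h(i)\leq_Q h(j)$ via the chain of anchor comparisons. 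This produces the infinite set of true numbers required by the first paragraph, and $\ran(f)$ then exists as a set in $\rca$.
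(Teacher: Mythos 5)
First, a remark on the comparison itself: the paper does not prove this lemma --- it imports it verbatim from \cite{FrittaionHendtlassMarconeShaferVanderMeeren}*{Lemma~4.4} --- so there is no in-paper proof to measure your argument against.  Judged on its own terms, your proposal is correct and complete only up to and including the case $y \leq_P x$: the reduction of $\ran(f)$ to an infinite set of true numbers, the thinning of the bad sequence to strictly increasing indices $n_i$ with constant $P$-coordinate $y$, and the contradiction $h(i) \leq_Q x_{n_i} <_Q h(j)$ obtained from Lemma~\ref{lem-QProps} item~\ref{it-QPropFalse} when some $n_i$ is false are all fine.

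The gap is that your central claim --- after thinning, every $n_i$ is true --- is simply false when $y \not\leq_P x$, so the ``anchor chasing'' you sketch for the remaining cases cannot be completed.  Take $P = \{x,y\}$ with $x \mid_P y$ (the very instance used in Lemma~\ref{lem-RSpoWPORev}) and the injection $f(2k) = 2k+1$, $f(2k+1) = 2k$, so that every even number is false.  Writing $x_n = \la n,x\ra$ and $y_n = \la n,y\ra$, one checks by induction that the anchor of $P_s$ in Construction~\ref{const-Xi} is always $x_{s-1}$ and that $y_n \mid_Q x_m$ for all $n \leq m$; the incomparability clause of the placement rule then gives $y_n \mid_Q y_m$ for all $n \neq m$, so $\{y_n : n \in \Nb\}$ is an infinite antichain and $i \mapsto y_{2i}$ is a bad sequence, already in your normal form, all of whose indices are false.  (The case $y >_P x$ fails similarly: with $P$ a two-element chain and the same $f$, the $y_n$ form an infinite descending sequence, so $i \mapsto y_{2i}$ is again an all-false bad sequence.)  The underlying problem is that falseness of $n_i$ controls the position of $x_{n_i}$, not of $h(i)$, relative to later copies; the second conjunct of Lemma~\ref{lem-QProps} item~\ref{it-QPropTrue} propagates \emph{incomparability} forward, which is precisely what lets a false-indexed element stay incomparable with everything later and never produce the forbidden $h(i) \leq_Q h(j)$.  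A correct proof has to extract $\ran(f)$ from such a bad sequence by a different mechanism (for instance a $\Sigma^0_1$ definition of truth such as $\exists i \, (h(i) <_Q x_k)$, whose soundness requires analysing when elements of later copies can sit below $x_k$ for a \emph{false} $k$); this is exactly the delicate case analysis the paper is forced to carry out by hand in Lemma~\ref{lem-RSpoWPORev}, whose first case is an infinite set of pairwise incomparable, false-indexed $y_n$'s --- the configuration your argument asserts cannot occur in a bad sequence.
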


We now give the reversal for Theorem~\ref{thm-ExtendedRSpo}.

\begin{Lemma}\label{lem-RSpoWPORev}
The statement ``Every infinite well-partial-order has a $(0,\infty)$-homogeneous chain'' implies $\aca$ over $\rca$.
\end{Lemma}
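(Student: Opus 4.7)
The plan is to apply Construction~\ref{const-Xi} with a carefully chosen finite partial order $P$ and distinguished element $x$.  Given an injection $f \colon \Nb \imp \Nb$, form $Q = \Xi_f(P,x)$ and argue in two cases.  If $Q$ is not a well-partial-order, then $\ran(f)$ exists by Lemma~\ref{lem-WPOACAreversal}.  Otherwise $Q$ is an infinite WPO, so the hypothesis supplies a $(0,\infty)$-homogeneous chain $C \subseteq Q$; since $Q$ contains no infinite descending sequence, $C$ may be thinned to an infinite ascending chain that remains $(0,\infty)$-homogeneous for $Q$.  The goal in this second case is to use $C$ to compute $\ran(f)$; combined with the first case this yields that $\ran(f)$ exists as a set for every injection $f$, whence $\aca$ follows by Lemma~\ref{lem-ACAinjection}.

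My candidate choice of $P$ is the two-element chain $\{a, x\}$ with $a <_P x$ and distinguished element $x$, so that $Q$ consists of disjoint copies $P_n = \{a_n, x_n\}$ with $a_n <_Q x_n$.  The decisive property, immediate from Lemma~\ref{lem-QProps}, is that if $n$ is a true number of $f$ (i.e.\ $f(n) < f(k)$ for all $k > n$), then $P_m <_Q x_n$ for every $m > n$, so the set of elements of $Q$ lying at or above $x_n$ is contained in the finite set $P_0 \cup \cdots \cup P_n$.  Dually, if $n$ is false, then after $n$ becomes false the copies $P_m$ placed above $x_n$ populate an infinite set of elements above $x_n$.  The ascending chain $C$ therefore cannot contain $x_n$ whenever $n$ is true (otherwise its tail would be confined to a finite subset of $Q$), and by $(0,\infty)$-homogeneity any element of $C$ comparable with $x_n$ must in that case lie strictly below $x_n$.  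A recursively enumerated infinite set of true numbers then suffices to compute $\ran(f)$: if $n$ is true with $f(n) = y$, then $\ran(f) \cap [0,y] = \{f(0), \dots, f(n)\}$.

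The main obstacle will be turning the dichotomy ``$n$ true iff $x_n$ has only finitely many successors in $Q$'' into a $C$-effective decision procedure for truth, uniformly in $n$, so that an infinite set of true numbers can be enumerated in $\rca$ from $C$ and $f$.  The $(0,\infty)$-homogeneity of $C$ only guarantees that $x_n$ is comparable with no, or with infinitely many, elements of $C$, and one must show that these two alternatives correlate cleanly with ``true'' and ``false''.  If the plain two-element $P$ proves insufficient---for instance because the chain $C$ can systematically avoid interacting with $x_n$ regardless of the truth of $n$---I would replace $P$ by a slightly richer finite partial order, e.g.\ the Y-shape $\{a,b,x\}$ with $a,b <_P x$ and $a \mid_P b$, or a small N-shape, chosen so that any $(0,\infty)$-homogeneous chain is forced to interact with each ``true column'' $P_n$ in a recognizable way.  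Once an infinite set of true numbers is in hand, $\ran(f)$ is recursively determined, and $\aca$ follows by Lemma~\ref{lem-ACAinjection}.
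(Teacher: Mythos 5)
Your high-level skeleton (run Construction~\ref{const-Xi} on an injection $f$, use Lemma~\ref{lem-WPOACAreversal} when $Q$ is not a well-partial-order, finish with Lemma~\ref{lem-ACAinjection}) is the paper's, but your concrete choice of $P$ does not work, and the step you yourself flag as ``the main obstacle'' is precisely the missing content. With $P$ the two-element chain $a <_P x$, each copy $P_{s+1}$ is inserted immediately above or below some $x_j$, and since (inductively) nothing is incomparable with $x_j$, the ``incomparable'' clause of the construction never fires: $Q = \Xi_f(P,x)$ is a \emph{linear} order. In a linear order every element is comparable with every element of every chain, so \emph{every} infinite subset of $Q$ is a $(0,\infty)$-homogeneous chain, and the hypothesis yields no information about $f$. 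Concretely, in the $\omega$-model of recursive sets with $f$ a recursive injection of nonrecursive range, $Q$ is a well-partial-order of that model (a bad sequence would compute $\ran(f)$ by Lemma~\ref{lem-WPOACAreversal}), and any infinite recursive subset of $Q$ witnesses your instance of the hypothesis; so your instance cannot drive a reversal. Your Y-shape fallback $\{a,b,x\}$ with $a,b <_P x$ has the same defect: no element of $P$ is incomparable with the distinguished element $x$, so the clause of Lemma~\ref{lem-QProps} that manufactures elements comparable with only finitely many others is vacuous.

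The missing idea is to take $P$ to be the two-element \emph{antichain} $\{x,y\}$ with $x \mid_P y$ and distinguished element $x$. Then Lemma~\ref{lem-QProps} gives that for a true number $n$ the element $y_n$ is incomparable with every later copy $P_m$, hence comparable with only finitely many elements of $Q$; these $y_n$ are the witnesses that defeat $(0,\infty)$-homogeneity. The paper then splits cases not on ``WPO or not'' but on whether there is an infinite set $C$ of elements $y_n$ with $n$ false such that no $y_n \in C$ lies below any $x_k$ with $k$ false. If such a $C$ exists, the true numbers $k$ are $\Sigma^0_1$-definable from $C$ by $\exists m \, (y_m \in C \andd y_m <_Q x_k)$, hence $\Delta^0_1$, so the set of true numbers and then $\ran(f)$ exist. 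If not, one checks that \emph{no} infinite chain of $Q$ is $(0,\infty)$-homogeneous (a chain containing some $x_n$, or some $y_n$ with $n$ true, or forced below some $x_k$, always admits a $y_m$ with $m$ true that is comparable with at least one but only finitely many of its elements), so the hypothesis forces $Q$ not to be a well-partial-order and Lemma~\ref{lem-WPOACAreversal} applies. Your plan of decoding $\ran(f)$ directly from a homogeneous chain can be salvaged with this $P$, but only after carrying out essentially this same analysis of which chains can be homogeneous; as written, the proposal leaves that analysis open and rests on a construction for which it provably cannot succeed.
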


\begin{proof}
Let $f \colon \Nb \imp \Nb$ be an injection.  Let $(P, <_P)$ be the $2$-element partial order $P = \{x, y\}$ with $x \mid_P y$.  Let $(Q, <_Q) = \Xi_f(P, x)$, and recall that $Q = \bigcup_{n \in \Nb}P_n$, where $P_n = \{n\} \times P$ for each $n$.  Let $x_n = \la n, x \ra$ and $y_n = \la n, y \ra$ denote the copies of $x$ and $y$ in $P_n$ for each $n$.

First, assume that there is an infinite set $C \subseteq Q$ consisting only of elements $y_n$ for false numbers $n$, and furthermore assume that there are no false numbers $n$ and $k$ with $y_n \in C$ and $y_n <_Q x_k$.  Then the number $k$ is true whenever there is an $n$ such that $y_n \in C$ and $y_n <_Q x_k$.  Moreover, if $k$ is a true number, then there is an $m > k$ with $y_m \in C$ because $C$ is infinite, and we have that $y_m <_Q x_k$ by Lemma~\ref{lem-QProps} item~\ref{it-QPropTrue}.  Thus the true numbers $k$ are $\Sigma^0_1$-definable by the formula $\exists m \, (y_m \in C \;\andd\; y_m <_Q x_k)$.  The original definition of the true numbers from Definition~\ref{def-true} is $\Pi^0_1$, so the set of true numbers exists by $\Delta^0_1$ comprehension.  Thus the range of $f$ exists as a set.

Second, assume instead that whenever $C \subseteq Q$ is an infinite set consisting only of elements $y_n$ for false numbers $n$, there are false numbers $n$ and $k$ with $y_n \in C$ and $y_n <_Q x_k$.  The goal of this case is to show that $Q$ has no $(0,\infty)$-homogeneous chain.  The hypothesis ``Every infinite well-partial-order has a $(0,\infty)$-homogeneous chain'' then implies that $Q$ is not a well-partial-order, so the range of $f$ exists as a set by Lemma~\ref{lem-WPOACAreversal}.

Consider an infinite chain $C \subseteq Q$.  We show that $C$ is not $(0,\infty)$-homogeneous.  Observe that if $n$ is a true number, then $n \in T_m$ for all $m > n$ and therefore that $y_n \mid_P P_m$ for all $m > n$ by Lemma~\ref{lem-QProps} item~\ref{it-QPropTrue}.    There are now three sub-cases.

\begin{itemize}
\item  There is a number $n$ with $x_n \in C$.  Let $m$ be a true number with $m > n$.  Then $y_m \comp_Q x_n$ by Lemma~\ref{lem-QProps}, but $y_m$ is only comparable with finitely many elements of $Q$ because $m$ is true.  So $y_m$ witnesses that $C$ is not $(0,\infty)$-homogeneous.

\medskip

\item There is a true number $n$ with $y_n \in C$.  Then $y_n$ is comparable with only finitely many elements of $Q$, so $C$ cannot be an infinite chain.

\medskip

\item The chain $C$ consists only of elements $y_n$ for false numbers $n$.  Then by the case assumption there are false numbers $n$ and $k$ with $y_n \in C$ and $y_n <_Q x_k$.  Let $m > k$ be such that $m$ is true and that $k \notin T_m$.  Then $y_n <_Q x_k <_Q y_m$ by Lemma~\ref{lem-QProps} item~\ref{it-QPropFalse}, but $y_m$ is only comparable with finitely many elements of $Q$ because $m$ is true.  So $y_m$ witnesses that $C$ is not $(0,\infty)$-homogeneous.
\end{itemize}

We have shown that the range of $f$ exists as a set in both of the main cases.  We may therefore conclude that $\aca$ holds by Lemma~\ref{lem-ACAinjection}.
\end{proof}

\begin{Theorem}\label{thm-ExtendedRSpoEquiv}
The following are equivalent over $\rca$.
\begin{enumerate}[(1)]
\item\label{it-RSpoExtACA} $\aca$.

\medskip

\item\label{it-RSpoExt} Every infinite partial order with no infinite antichain has a $(0,\infty)$-homogeneous chain.

\medskip

\item\label{it-RSpoExtWPO} Every infinite well-partial-order has a $(0,\infty)$-homogeneous chain.
\end{enumerate}
\end{Theorem}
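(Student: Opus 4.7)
The plan is to close a three-way cycle of implications by recycling work that is already in place in the excerpt, so no fresh combinatorics is required. Specifically, I will argue \ref{it-RSpoExtACA}~$\Imp$~\ref{it-RSpoExt}~$\Imp$~\ref{it-RSpoExtWPO}~$\Imp$~\ref{it-RSpoExtACA}.

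For \ref{it-RSpoExtACA}~$\Imp$~\ref{it-RSpoExt}, I would simply invoke Theorem~\ref{thm-ExtendedRSpo}, which asserts exactly that $\aca$ proves ``Every infinite partial order with no infinite antichain has a $(0,\infty)$-homogeneous chain.'' No additional argument is needed at this step.

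For \ref{it-RSpoExt}~$\Imp$~\ref{it-RSpoExtWPO}, the key observation is that every well-partial-order has no infinite antichain. Working in $\rca$, suppose $(P,<_P)$ is an infinite well-partial-order and let $h\colon \Nb \imp P$ enumerate (as a bijection, by thinning if needed) an infinite antichain in $P$. Then $h(m) \mid_P h(n)$ for $m < n$, so in particular $h(m) \nleq_P h(n)$, witnessing that $h$ is a bad sequence and contradicting that $P$ is a well-partial-order. Hence every infinite well-partial-order is an infinite partial order without infinite antichains, and item~\ref{it-RSpoExt} applies to yield a $(0,\infty)$-homogeneous chain.

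For \ref{it-RSpoExtWPO}~$\Imp$~\ref{it-RSpoExtACA}, I would directly quote Lemma~\ref{lem-RSpoWPORev}, which states that the principle in item~\ref{it-RSpoExtWPO} implies $\aca$ over $\rca$. This closes the cycle and completes the equivalence. The only step that required substantive work was the reversal in Lemma~\ref{lem-RSpoWPORev} (via Construction~\ref{const-Xi}) and the forward direction in Theorem~\ref{thm-ExtendedRSpo}; assembling the equivalence itself is routine once those are in hand, and there is no serious obstacle.
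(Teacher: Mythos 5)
Your proposal is correct and matches the paper's proof exactly: the paper also closes the cycle \ref{it-RSpoExtACA}~$\Imp$~\ref{it-RSpoExt} via Theorem~\ref{thm-ExtendedRSpo}, \ref{it-RSpoExt}~$\Imp$~\ref{it-RSpoExtWPO} via the observation that well-partial-orders have no infinite antichains, and \ref{it-RSpoExtWPO}~$\Imp$~\ref{it-RSpoExtACA} via Lemma~\ref{lem-RSpoWPORev}. Your added justification that an infinite antichain yields a bad sequence is a harmless elaboration of what the paper leaves implicit.
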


\begin{proof}
We have that \ref{it-RSpoExtACA}~$\Imp$~\ref{it-RSpoExt} by Theorem~\ref{thm-ExtendedRSpo}, that \ref{it-RSpoExt}~$\Imp$~\ref{it-RSpoExtWPO} because well-partial-orders do not have infinite antichains, and that \ref{it-RSpoExtWPO}~$\Imp$~\ref{it-RSpoExtACA} by Lemma~\ref{lem-RSpoWPORev}.
\end{proof}

It follows that the statement ``Every infinite partial order with no infinite antichain has a $(0,\cof)$-homogeneous chain'' implies $\aca$ over $\rca$.  As with $\cofrspo_{<\infty}$, we know no better upper bound for the statement than $\pica$, yet the statement cannot be equivalent to $\pica$ over $\rca$ because it is $\Pi^1_2$.

\begin{Question}
What is the strength of the statement ``Every infinite partial order with no infinite antichain has a $(0,\cof)$-homogeneous chain'' relative to $\rca$?
\end{Question}

\section{Maximal chain principles and comments on the original Rival--Sands proof}\label{sec-MaxChains}

The original proof of $\rspo_{<\infty}$ given by Rival and Sands in~\cite{RivalSands} relies on the following maximality principle, which we name $\mmlc$ for the \emph{maximal max-less chain principle}.

\begin{Definition}
Call a chain $C$ in a partial order $(P, <_P)$ \emph{max-less} if $C$ has no maximum element:  $\forall x \in C \; \exists y \in C \; (x <_P y)$.  The \emph{maximal max-less chain principle} ($\mmlc$) is the following statement.  For every partial order $(P, <_P)$, there is a max-less chain that is $\subseteq$-maximal among the max-less chains of $P$.  That is, there is a max-less chain $C \subseteq P$ for which $C \subseteq D \subseteq P$ implies $C = D$ for all max-less chains $D$ of $P$.  Call such a $C$ a \emph{maximal max-less chain} in $P$.
\end{Definition}

Similarly, call a chain $C$ in a partial order $(P, <_P)$ \emph{min-less} if $C$ has no minimum element:  $\forall x \in C \; \exists y \in C \; (y <_P x)$.  By reversing the partial order, we see that, over $\rca$, $\mmlc$ is equivalent to the statement ``For every partial order $(P, <_P)$, there is a min-less chain that is $\subseteq$-maximal among the min-less chains of $P$.''

We very roughly sketch the Rival and Sands proof, using some of the terminology we have introduced here.  Let $(P, <_P)$ be a countably infinite partial order of finite width $k$ for some $k$.  Suppose for a contradiction that $(P, <_P)$ contains no $(0,\infty)$-homogeneous chain.  The partial order $P$ contains either an infinite ascending sequence or an infinite descending sequence, and we may assume that $P$ contains an infinite ascending sequence by reversing the order if necessary.  Define a sequence $(S_i, C_i, D_i)_{i \leq k+1}$ of triples of subsets of $P$ as follows.  First, let $S_0 = P$, by $\mmlc$ let $C_0$ be a maximal max-less chain in $P$, and let $D_0$ be a cofinal infinite ascending sequence in $C_0$.  Given $(S_i, C_i, D_i)$, let $S_{i+1}$ consist of the elements of $P$ that are counterexamples to the $(0,\infty)$-homogeneity of $D_i$ in the sense of Definition~\ref{def-counterexample}.  Again by $\mmlc$, let $C_{i+1}$ be a maximal max-less chain in $S_{i+1}$, and let $D_{i+1}$ be a cofinal infinite ascending sequence in $C_{i+1}$.  Using the maximality of the $C_j$'s, show that $D_i \subseteq S_j$ whenever $j \leq i \leq k+1$.  This property allows us to choose an antichain $\{d_0, \dots, d_k\}$ with $d_i \in D_{k+1-i}$ for each $i \leq k$, which contradicts that $P$ has width $k$.  The $d_i$'s are chosen back-to-front, with $d_0$ chosen from $D_{k+1}$, then $d_1$ chosen from $D_k$, and so on.  Thus this method does not suffice to prove the extensions of Theorems~\ref{thm-CofExtendedRSpo} and~\ref{thm-ExtendedRSpo} to partial orders without infinite antichains but not necessarily of finite width.

We analyze the strength of $\mmlc$ and of a few other statements concerning maximal chains in partial orders.  In particular, we show that $\mmlc$ is equivalent to $\pica$ over $\rca$.  Therefore, the original proof by Rival and Sands requires $\pica$ in the sense that it relies on a principle that is equivalent to $\pica$.  Furthermore, if a standard bound on the width $k$ of the partial orders being considered is not fixed in advance, then the original Rival and Sands proof is not a proof in $\pica$ because it iterates applying $\mmlc$ a finite-but-arbitrarily-large number of times.

It is well-known and easy to show that $\rca$ suffices to produce a maximal chain and a maximal antichain in a given partial order $(P, <_P)$.  Enumerate the elements of $P$ as $\la p_n : n \in \Nb \ra$, and add element $p_n$ to the chain (antichain) if and only if it is comparable (incomparable) to all the elements previously added to the chain (antichain).  However, if we want to extend a given chain $C$ to a maximal chain $D$, then $\aca$ is required.

\begin{Proposition}\label{prop-ExtendChainMax}
The following are equivalent over $\rca$.
\begin{enumerate}[(1)]
\item $\aca$.

\medskip

\item\label{it-ExtendChainMax} For every partial order $(P, <_P)$ and chain $C \subseteq P$, there is a maximal chain $D$ with $C \subseteq D$.
\end{enumerate}
\end{Proposition}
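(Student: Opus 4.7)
The plan is to prove both directions. For the forward direction, assuming $\aca$, given a partial order $(P, <_P)$ and a chain $C \subseteq P$, I would use arithmetical comprehension to form the set $E = \{p \in P : \forall c \in C \; (p \comp_P c)\}$, which is defined by a $\Pi^0_1$ formula in the parameters $P$, $<_P$, and $C$. Working inside $E$, I would enumerate $E$ and greedily build a chain $D' \subseteq E$ by adding each candidate $p$ exactly when $p$ is comparable with all elements of $D'$ placed so far. This greedy construction can be carried out in $\rca$ once $E$ exists as a set. Setting $D = C \cup D'$, the set $D$ is a chain because $C$ is a chain, $D'$ is a chain by construction, and every element of $D' \subseteq E$ is comparable with every element of $C$. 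Maximality is straightforward: any $p \in P \setminus D$ either lies outside $E$, hence is incomparable with some member of $C \subseteq D$; or lies in $E$ but was rejected from $D'$ at some stage, hence is incomparable with some earlier element of $D' \subseteq D$.

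For the reversal, I would appeal to Lemma~\ref{lem-ACAinjection}. Given an injection $f \colon \Nb \imp \Nb$, the plan is to construct a partial order $(P, <_P)$ and a chain $C \subseteq P$ so that any maximal chain $D \supseteq C$ encodes $\ran(f)$. Let $P = \{a_{n,k} : n, k \in \Nb\} \cup \{b_n : n \in \Nb\}$ with the following order (all relations computable from $f$): the $a$'s are linearly ordered by $a_{n,k} <_P a_{m,j}$ iff $(n,k)$ precedes $(m,j)$ lexicographically; the $b$'s are linearly ordered by $b_n <_P b_m$ iff $n < m$; between levels, $a_{n,k} <_P b_m$ and $b_n <_P a_{m,j}$ whenever the first index of the lower element is strictly less than the first index of the upper; finally, the key clause is that for $m = n$ we put $a_{n,k} <_P b_n$ iff $n \notin \{f(0), \ldots, f(k)\}$, and $a_{n,k} \mid_P b_n$ otherwise. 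Then $C = \{a_{n,k} : n, k \in \Nb\}$ is a chain under the lex order.

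Once transitivity of $<_P$ is verified, the analysis proceeds cleanly. An element $b_n$ is comparable with every $a_{m,j}$ for $m \neq n$ by the level structure, and is comparable with every $a_{n,k}$ iff $n$ never appears in $\ran(f)$. Hence $b_n$ is compatible with $C$ iff $n \notin \ran(f)$. Since the $b_n$'s are pairwise comparable, maximality forces $D = C \cup \{b_n : n \notin \ran(f)\}$, so $\ran(f) = \{n : b_n \notin D\}$ exists by $\Delta^0_1$ comprehension, and $\aca$ follows. The main obstacle is the bookkeeping to verify transitivity, particularly for configurations of the form $a_{n,i} <_P a_{n,k} <_P b_n$; the crucial observation is that the set $\{k : n \notin \{f(0), \ldots, f(k)\}\}$ is downward-closed in $k$, which is exactly what the key clause needs in order to cooperate with the lex order on the $a$'s.
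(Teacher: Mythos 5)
Your forward direction is essentially the paper's proof: both form the set of elements comparable with every member of $C$ by arithmetical comprehension and then run the greedy maximal-chain construction inside it; the paper phrases the conclusion as ``$C \subseteq D$ by maximality of $D$ in $Q$'' rather than taking an explicit union with $C$, but the argument is the same. Your reversal, however, takes a genuinely different route, and it is correct. The paper builds its partial order on top of Construction~\ref{const-TFstages}: the second chain is the true/false-numbers linear order $L$, the cross-relations are governed by ``$n$ is true at stage $m$,'' and a maximal chain extending $\{c_n : n \in \Nb\}$ picks out exactly the true numbers, from which $\ran(f)$ is then extracted. Your construction bypasses the true-numbers machinery and encodes $\ran(f)$ directly: $b_n$ is incomparable with cofinitely many elements of the lexicographic chain $C$ exactly when $n \in \ran(f)$, so any maximal chain $D \supseteq C$ satisfies $n \in \ran(f) \Leftrightarrow b_n \notin D$, and $\Delta^0_1$ comprehension (together with Lemma~\ref{lem-ACAinjection}) finishes. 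The transitivity check does go through: the only configuration that uses the key clause twice is $a_{n,i} <_P a_{n,k} <_P b_n$, which is handled by the downward-closure of $\{k : n \notin \{f(0), \dots, f(k)\}\}$ exactly as you say, and every mixed case reduces to comparing first indices. Your order is also a union of two chains, so it still supports the paper's follow-up remark that the statement remains equivalent to $\aca$ for $2$-chain decomposable partial orders. What the paper's version buys is reuse of a construction already needed elsewhere in the paper; what yours buys is a self-contained and more transparent encoding of the range.
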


\begin{proof}
For the forward direction, let $(P, <_P)$ be a partial order, and let $C \subseteq P$ be a chain.  Let $Q = \{q \in P : \forall c \in C \; (q \comp_P c)\}$ be the set of elements of $P$ that are comparable with all elements of $C$, and consider the partial order $(Q, <_P)$.  Notice that $C \subseteq Q$ because $C$ is a chain.  Let $D$ be a maximal chain in $Q$, which can be produced in $\rca$ as described above.  As $D \subseteq Q$, every element of $D$ is comparable with every element of $C$.  Therefore $C \subseteq D$ by the maximality of $D$ in $Q$.  We claim that $D$ is also a maximal chain in $P$.  Suppose for a contradiction that it is not.  Then there is a $p \in P \setminus D$ that is comparable with every element of $D$.  Then $p$ is also comparable with every element of $C$, so $p \in Q$.  Thus there is a $p \in Q \setminus D$ that is comparable with every element of $D$.  This contradicts the maximality of $D$ in $Q$.  Therefore $D$ is a maximal chain in $P$ with $C \subseteq D$.

For the reversal, let $f \colon \Nb \imp \Nb$ be an injection.  We define a partial order $(P, <_P)$ and a chain $C \subseteq P$ in such a way that the range of $f$ can be extracted from $f$ and any maximal chain $D \supseteq C$.

Let $(L, <_L)$ be the linear order defined in Construction~\ref{const-TFstages} for $f$, where $L = \{\ell_n : n \in \Nb\}$.  Let $P = \{c_n, \ell_n : n \in \Nb\}$, let $c_n <_P c_m$ if and only if $n < m$, and let $\ell_n <_P \ell_m$ if and only if $\ell_n <_L \ell_m$.  Moreover, for each $n \leq m$, define
\begin{enumerate}[(1)]
\item\label{constr1} $c_m <_P \ell_n$ if $\forall k \, (n < k \leq m \;\imp\; f(n) < f(k))$ (i.e., if $n$ is true at stage $m$), and $c_m \mid_P \ell_n$ otherwise;

\medskip

\item\label{constr2} $c_n <_P \ell_m$.
\end{enumerate}
Notice that $c_n <_P \ell_n$ for every $n$.

We must verify that $(P, <_P)$ is a partial order.  Clearly $<_P$ is irreflexive, so we check that $<_P$ is transitive.  For no $a, b \in \Nb$ it is the case that $\ell_a <_P c_b$, so we need only verify the following four cases.
\begin{itemize}
\item If $\ell_a <_P \ell_b$ and $\ell_b <_P \ell_d$ for some $a, b, d \in \Nb$, then $\ell_a <_P \ell_d$ because the restriction of $<_P$ to $L$ is a linear order.

\medskip

\item Similarly, if $c_a <_P c_b$ and $c_b <_P c_d$ for some $a, b, d \in \Nb$, then $c_a <_P c_d$ because the restriction of $<_P$ to $\{c_n : n \in \Nb\}$ is a linear order.

\medskip

\item Suppose that $c_s <_P c_m$ and $c_m <_P \ell_n$ for some $s, m, n \in \Nb$ with $s \neq m$.  Notice that $c_s <_P c_m$ implies $s < m$.  If $s \leq n$, then from condition~\ref{constr2} we immediately obtain $c_s <_P \ell_n$.  If instead $n < s$, then $n < s < m$.  That $c_m <_P \ell_n$ means that $n$ is true at stage $m$ by condition~\ref{constr1}.  Thus $n$ is also true at stage $s$, so $c_s <_P \ell_n$ as well.

\medskip

\item Finally, suppose that $c_s <_P \ell_m$ and $\ell_m <_P \ell_n$ for some $s, m, n \in \Nb$ with $m \neq n$.  If $s \leq n$, then it follows immediately from condition~\ref{constr2} that $c_s <_P \ell_n$.  So suppose that $n < s$. We claim that $n < m$.  Suppose on the contrary that $m < n$.  Then $m$ is false at stage $n$ by Construction~\ref{const-TFstages} because $\ell_m <_P \ell_n$.  Thus $m$ is false at stage $s$ as well because $m < n < s$, and this contradicts $c_s <_P \ell_m$.  We therefore have that $n < s$, that $n < m$, and that $n$ is true at stage $m$ because $\ell_m <_P \ell_n$.  If $s \leq m$, then $n < s \leq m$, so $n$ is true at stage $s$ as well, and we have $c_s <_P \ell_n$ as desired.  Otherwise $m < s$, so $n < m < s$ and $m$ is true at stage $s$ because $c_s <_P \ell_m$.  Suppose for a contradiction that $n$ is false at stage $s$.  Then there is a $k$ with $n < k \leq s$ and $f(k) < f(n)$.  It must be that $k > m$ because otherwise $k$ would witness that $n$ is false at stage $m$.  It must therefore also be that $f(m) < f(k)$ because otherwise $k$ would witness that $m$ is false at stage $s$.  Thus $f(m) < f(k) < f(n)$, so in fact $m$ witnesses that $n$ is false at stage $m$, which is a contradiction.  Thus $n$ is true at stage $s$, so $c_s <_P \ell_n$.
\end{itemize}

We conclude that $(P, <_P)$ is indeed a partial order.  Let $C$ be the chain $C = \{c_n : n \in \Nb\}$, and let $D$ be a maximal chain in $P$ with $C \subseteq D$.  Then $\{n : \ell_n \in D\}$ is the set of true numbers for $f$.  To see this, first observe that if $n$ is true, then $\ell_n$ is comparable with every element of $P$, so $\ell_n$ must be in $D$ by maximality.  On the other hand, if $n$ is false, then there is a $m > n$ with $f(m) < f(n)$ witnessing that $n$ is false.  Then $c_m \mid_P \ell_n$, so $\ell_n$ cannot be in $D$.  The true numbers for $f$ thus form a set, and therefore the range of $f$ exists as a set.  This implies $\aca$ by Lemma~\ref{lem-ACAinjection}.
\end{proof}

The partial order constructed in the reverse direction of Proposition~\ref{prop-ExtendChainMax} is $2$-chain decomposable.  Thus Proposition~\ref{prop-ExtendChainMax} item~\ref{it-ExtendChainMax} remains equivalent to $\aca$ when restricted to $2$-chain decomposable partial orders or to partial orders of width $\leq\! 2$.  Extending a given antichain to a maximal antichain in a partial order is also equivalent to $\aca$ over $\rca$ by~\cite{FrittaionMarcone}*{Lemma~5.5}.

Finally, we turn to $\mmlc$ and show that it is equivalent to $\pica$ over $\rca$, even when restricted to linear orders.  Consider a partial order $(P, <_P)$.  The axiomatic difficulty in producing a maximal max-less chain in $P$ is in determining whether a given $p \in P$ can be a member of a max-less chain.  It is easy to see that $p$ is a member of a max-less chain if and only if $p\ua$ is reverse ill-founded.  However, this is a $\Sigma^1_1$ property of $p$, and we show that producing the set of such $p$ requires $\pica$ in general.  For the reversal, recall the \emph{Kleene--Brouwer ordering} of $\Nb^{<\Nb}$, whereby $\tau \ltKB \sigma$ if either $\tau$ is a proper extension of $\sigma$ or $\tau$ is to the left of $\sigma$.  That is, $\tau \ltKB \sigma$ if and only if
\begin{align*}
\tau \succ \sigma \quad\orr\quad \exists n < \min(|\sigma|, |\tau|) \; \bigl(\tau(n) < \sigma(n) \;\andd\; \forall i < n \; (\sigma(i) = \tau(i))\bigr).
\end{align*}

\begin{Theorem}\label{thm-MaxLess}
The following are equivalent over $\rca$.
\begin{enumerate}[(1)]
\item\label{it-PicaML} $\pica$.

\medskip

\item\label{it-MMLC} $\mmlc$.

\medskip

\item\label{it-MMLCLO} $\mmlc$ restricted to linear orders.

\medskip

\item\label{it-FindWFPO} For every partial order $(P, <_P)$, there is a set $W \subseteq P$ such that $\forall p \in P \; (p \in W \;\biimp\; \textup{$p\da$ is well-founded})$.

\medskip

\item\label{it-FindWFLO} For every linear order $(L, <_P)$, there is a set $W \subseteq L$ such that $\forall \ell \in L \; (\ell \in W \;\biimp\; \textup{$\ell\da$ is well-founded})$.
\end{enumerate}	
\end{Theorem}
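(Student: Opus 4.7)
I would establish the equivalences by proving the cycle $(1) \Imp (2) \Imp (3) \Imp (5) \Imp (1)$ together with the shortcut $(1) \Imp (4) \Imp (5)$, which collectively cover all five statements. The easy links are $(1) \Imp (4)$ (``$p\da$ is well-founded'' is a $\Pi^1_1$ condition on $p$, so $\pica$ produces $W$ by direct $\Pi^1_1$-comprehension) together with the trivial specializations $(4) \Imp (5)$ and $(2) \Imp (3)$. For $(1) \Imp (2)$, I would first apply $\pica$ to form $Q = \{p \in P : p\ua_P \text{ is reverse ill-founded}\}$---the set of elements that can occur in a max-less chain, since any max-less chain in a countable partial order thins to an infinite ascending sequence whose members all lie in $Q$. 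If $Q = \emptyset$ the empty chain is maximal max-less; otherwise I would construct a maximal max-less chain inside $(Q, <_P)$ by a leftmost-path argument on the tree of extendable finite partial chains in $Q$, invoking $\lpp$ (equivalent to $\pica$ by Theorem~\ref{thm-LPP}).

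For $(3) \Imp (5)$, given a linear order $L$, I would apply $\mmlc$ to the reverse $L^*$ (formable in $\rca$) to obtain a maximal max-less chain $C$, and then show (inside $\rca$) that $C$ is unique and equals $L \setminus W$, where $W = \{\ell \in L : \ell\da_L \text{ is well-founded}\}$. The set $L \setminus W$ is downward-closed in $L^*$ (if $\ell \in L \setminus W$ and $\ell' >_L \ell$ then $\ell'\da_L \supseteq \ell\da_L$ is ill-founded, so $\ell' \in L \setminus W$) and has no $L^*$-max (any purported $L$-minimum $m$ would yield an infinite $L$-descending sequence in $m\da_L$ whose subsequent terms are forced into $L \setminus W$ below $m$, contradicting $m$'s minimality). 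Uniqueness hinges on $W$ being itself well-founded as a linear order---an infinite $L$-descending sequence in $W$ would make the top element's downward closure ill-founded, placing it in $L \setminus W$---so any attempt to enlarge $L \setminus W$ in $L^*$ by adding elements of $W$ contributes a well-founded subset of $W$ whose $<_L$-minimum becomes the $L^*$-max of the enlargement, breaking max-lessness. Hence $W = L \setminus C$ exists by $\Delta^0_1$-comprehension.

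For $(5) \Imp (1)$, given an ill-founded tree $T \subseteq \Nb^{<\Nb}$, I would form the Kleene--Brouwer linear order $(T, \leqKB)$ in $\rca$ and apply (5) to extract its well-founded part $W$. The crucial observation is that $\sigma\da_{KB}$ is well-founded if and only if $T$ has no infinite path extending $\sigma$ and no infinite path lying to the left of $\sigma$; consequently $W$ coincides with the set of nodes strictly to the left of the leftmost path $f$ of $T$. The leftmost path is then computed recursively by $f(k) = \min\{n : f \rst k \smf n \in T \setminus W\}$, yielding $\lpp$ and hence $\pica$ by Theorem~\ref{thm-LPP}.

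The main obstacle I anticipate is the uniqueness step in $(3) \Imp (5)$: while the existence of $L \setminus W$ as \emph{some} maximal max-less chain is easy, the delicate piece is establishing the well-foundedness of $W$ as a linear order, and the consequent forcing of $L^*$-maxima into any enlargement, entirely inside $\rca$---this is what converts the bare existence supplied by $\mmlc$ into identification of the target set $W$.
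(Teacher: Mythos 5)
Your overall architecture coincides with the paper's: the same six implications, with the same central devices (the set $Q$ of elements with reverse ill-founded upward closure for $(1) \Imp (2)$, the maximal min-less chain for $(3) \Imp (5)$, and the Kleene--Brouwer order plus $\lpp$ for $(5) \Imp (1)$). Two of your implications, however, have gaps as written. In $(1) \Imp (2)$, after forming $Q$ you propose to extract a maximal max-less chain by a leftmost-path argument on ``the tree of extendable finite partial chains in $Q$.'' A path through a tree of finite chains is just some infinite chain; nothing in the setup forces it to be $\subseteq$-maximal (in a $Q$ of order-type $\omega + \omega$ the unique maximal max-less chain is all of $Q$, while such a path can return a proper cofinal subset). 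To force maximality you would have to encode an include/exclude decision for every element of $Q$ and arrange that ``leftmost'' means ``greedily include,'' and the extendability predicate you would then need is itself $\Sigma^1_1$. None of this is necessary, and this is the observation you miss: \emph{any} maximal chain of $Q$, obtainable by the usual greedy construction in $\rca$ alone, is automatically max-less (a maximum $m$ lies in $Q$, hence has an infinite ascending sequence above it inside $Q$, contradicting maximality in $Q$) and automatically maximal among the max-less chains of $P$ (every max-less chain is contained in $Q$). Once $Q$ exists, no further strength is needed.

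In $(5) \Imp (1)$ your recursion $f(k) = \min\{n : (f \rst k)^\smf n \in T \setminus W\}$ is exactly the paper's, but your justification rests on the claim that $\sigma\da$ is $\ltKB$-well-founded if and only if no infinite path of $T$ extends $\sigma$ or lies to its left. The ``only if'' direction is fine; the ``if'' direction is a localized form of ``$T$ well-founded implies $\leqKB$ well-founded,'' which is not provable in $\rca$, and the resulting identification of $W$ with the nodes strictly to the left of the leftmost path presupposes the very object you are constructing. The characterization is also unnecessary: all the recursion needs is that $I = T \setminus W$ is a nonempty subtree of $T$ with no $\ltKB$-minimum (which gives totality of $f$, via the case split ``$\tau$ to the left of $f \rst n$ versus $\tau$ a proper extension of $f \rst n$'') and that every initial segment of every infinite path lies in $I$ (which gives leftmost-ness); both facts follow in $\rca$ directly from the definition of $W$. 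Finally, your $(3) \Imp (5)$ is correct but detours through the well-foundedness of $W$ and a uniqueness claim that your enlargement argument does not quite deliver; it suffices to note that $C \subseteq L \setminus W$ (every element of a min-less chain has an ill-founded downset) and that $L \setminus W$ is itself a min-less chain, whence $C = L \setminus W$ by the maximality of $C$.
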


\begin{proof}
For~\ref{it-PicaML}~$\Imp$~\ref{it-MMLC}, let $(P, <_P)$ be a partial order.  Use $\pica$ (and the fact that the $\Sigma^1_1$ sets are the complements of the $\Pi^1_1$ sets) to define the set $Q = \{q \in P : \text{$q\ua$ is reverse ill-founded}\}$, and consider the partial order $(Q, <_P)$.  Let $C$ be a maximal chain in $Q$, which may be produced in $\rca$.

We first show that $C$ is max-less.  To see this, suppose for a contradiction that $C$ has a maximum element $m$.  Then $m \in C \subseteq Q$, so $m\ua$ is reverse ill-founded in $P$.  Thus there is an infinite ascending sequence $m <_P a_0 <_P a_1 <_P a_2 <_P \cdots$ in $P$.  Each $a_i\ua$ is reverse ill-founded in $P$ as well, so $\{a_i : i \in \Nb\} \subseteq Q$.  Then $C \cup \{a_i : i \in \Nb\} \subseteq Q$ is a chain in $Q$ properly extending $C$, contradicting that $C$ is a maximal chain in $Q$.  Thus $C$ is max-less.

We now show that $C$ is maximal among the max-less chains of $P$.  Suppose that $D \subseteq P$ is a max-less chain with $C \subseteq D$.  Let $d \in D$.  As $D$ is max-less, we may define an infinite ascending sequence $d = d_0 <_P d_1 <_P d_2 <_P  \cdots$ of elements of $D$.  Thus $d\ua$ is reverse ill-founded in $P$, so $d \in Q$.  This shows that $D \subseteq Q$.  That is, $C$ and $D$ are chains in $Q$ with $C \subseteq D$.  Therefore $C = D$ by the maximality of $C$ in $Q$.  Thus $C$ is a maximal max-less chain in $P$.

We have that~\ref{it-PicaML}~$\Imp$~\ref{it-FindWFPO} because the set $W$ required by item~\ref{it-FindWFPO} is $\Pi^1_1$.  Furthermore, \ref{it-MMLC}~$\Imp$~\ref{it-MMLCLO} because item~\ref{it-MMLCLO} is a special case of item~\ref{it-MMLC}, and likewise \ref{it-FindWFPO}~$\Imp$~\ref{it-FindWFLO} because item~\ref{it-FindWFLO} is a special case of item~\ref{it-FindWFPO}.

For~\ref{it-MMLCLO}~$\Imp$~\ref{it-FindWFLO}, let $(L, <_L)$ be a linear order, and by item~\ref{it-MMLCLO} applied to the reverse of $L$, let $C$ be a maximal min-less chain in $L$.  Then $C$ consists of exactly the $\ell \in L$ for which $\ell\da$ is ill-founded.  If $\ell \in C$, then there is an infinite descending sequence in $C$ below $\ell$ because $C$ is min-less.  Thus $\ell\da$ is ill-founded.  Conversely, consider an $\ell \in L$ with $\ell\da$ ill-founded.  Then there is an infinite descending sequence $\ell = d_0 >_L d_1 >_L d_2 >_L \cdots$ in $L$ below $\ell$.  Thin the sequence so that its range exists as a set $D$ with $\ell \in D$.  Then $C$ and $D$ are both min-less, so $C \cup D$ is min-less as well, plus $C \cup D$ is a chain because every subset of a linear order is a chain.  Therefore $C = C \cup D$ by the maximality of $C$, so $\ell \in C$.  Thus $W = L \setminus C$ consists of exactly the $\ell \in L$ for which $\ell\da$ is well-founded.

To finish the proof, it suffices to show that \ref{it-FindWFLO}~$\Imp$~\ref{it-PicaML}.  To do this, we show that item~\ref{it-FindWFLO} implies the leftmost path principle $\lpp$, which is equivalent to $\pica$ over $\rca$ by~Theorem~\ref{thm-LPP}.  Let $T \subseteq \Nb^{<\Nb}$ be an ill-founded tree, and apply item~\ref{it-FindWFLO} to the linear order $(T, \ltKB)$ (and take complements) to obtain the set $I$ of $\sigma \in T$ such that $\sigma\da$ is ill-founded with respect to $\ltKB$.  Notice that $I$ is a subtree of $T$ and that $I$ has no $\ltKB$-minimum element.

Recursively define the following \emph{a priori} possibly partial function $f \colon \Nb \imp \Nb$ such that for every $n$, if $f \rst n$ is defined, then $f \rst n \in I$.  Given $n$, if $f \rst n$ is defined, let $m$ be least such that $(f \rst n)^\smf m \in I$ (if there is such an $m$) and set $f(n) = m$.  We show that $f$ is total.  Suppose for a contradiction that $f$ is partial, and by the $\Pi^0_1$ least element principle, let $n$ be least such that $f(n)$ is undefined.  Then $f \rst n$ is defined and in $I$.  As $I$ has no $\ltKB$-least element, let $\tau \in I$ be such that $\tau \ltKB f \rst n$.  If $\tau$ is to the left of $f \rst n$, then let $j < n$ be such that $\tau(j) < f(j)$ and $\tau \rst j = f \rst j$.  Then $\tau \rst (j+1) = (f \rst j)^\smf \tau(j)$ is in $I$ and $\tau(j) < f(j)$, which contradicts the definition of $f(j)$.  On the other hand, if $\tau \succ f \rst n$, then $|\tau| > n$ and $\tau \rst (n+1) = (f \rst n)^\smf \tau(n)$ is in $I$.  Thus there is an $m$ such that $(f \rst n)^\smf m$ is in $I$, and therefore there is a least such $m$.  Thus $f(n)$ is defined, which is a contradiction.  Therefore $f$ is total.  Thus $f$ is an infinite path through $I$ and hence is an infinite path through $T$.  We show that $f$ is the leftmost infinite path through $T$.

Consider an infinite path $g$ through $T$.  Then $g \rst 0 \gtKB g \rst 1 \gtKB g \rst 2 \gtKB \cdots$ is an infinite $\ltKB$-descending sequence.  Thus $(g \rst n)\da$ is ill-founded for every $n$, so $g \rst n \in I$ for every $n$.  Now suppose for a contradiction that $g$ is to the left of $f$.  Then there is an $n$ such that $g(n) < f(n)$ and $g \rst n = f \rst n$.  Thus $g \rst (n+1) = (f \rst n)^\smf g(n)$ is in $I$ and $g(n) < f(n)$, which contradicts the definition of $f(n)$.  Therefore $g$ cannot be to the left of $f$, so $f$ is the leftmost path through $T$.  This concludes the proof of $\lpp$.
\end{proof}

By taking complements and/or reversing the order, the statement ``$p\da$ is well-founded'' may be replaced by any of ``$p\da$ is ill-founded,'' ``$p\ua$ is reverse well-founded,'' and ``$p\ua$ is reverse ill-founded'' in Theorem~\ref{thm-MaxLess} item~\ref{it-FindWFPO}, and the analogous replacement may be made in item~\ref{it-FindWFLO} as well.

\section*{Acknowledgments}
We thank Keita Yokoyama for helpful discussions, especially concerning the formalization of Kierstead's theorem in $\rca$.  We thank our anonymous reviewer for several helpful suggestions.  For their generous support, we thank the \emph{Workshop on Ramsey Theory and Computability} at the University of Notre Dame Rome Global Gateway; Dagstuhl Seminar 18361:  \emph{Measuring the Complexity of Computational Content:  From Combinatorial Problems to Analysis}; the BIRS/CMO workshop 19w5111: \emph{Reverse Mathematics of Combinatorial Principles}; and the University of Leeds School of Mathematics Research Visitors' Centre.  Fiori-Carones was supported by the Mathematical Center in Akademgorodok under the agreement No. 075-15-2022-281 with the Ministry of Science and Higher Education of the Russian Federation.  Marcone was supported by the departmental PRID funding \emph{HiWei --- The higher levels of the Weihrauch hierarchy} and by the Italian PRIN 2017 grant \emph{Mathematical Logic:  models, sets, computability}.  Shafer was supported by the John Templeton Foundation grant ID 60842 \emph{A new dawn of intuitionism: mathematical and philosophical advances} and by EPSRC grant EP/T031476/1 \emph{Reverse mathematics of general topology}.  The opinions expressed in this work are those of the authors and do not necessarily reflect the views of the John Templeton Foundation.

\bibliographystyle{amsplain}
\bibliography{FioriCaronesMarconeShaferSoldaRSp}

\vfill

\end{document}